\documentclass[11pt]{article}
\usepackage{amsmath}
\usepackage{xfrac}
\usepackage{amsfonts}
\usepackage{amsthm}
\usepackage{amssymb}
\usepackage{verbatim}
\usepackage{enumerate}
\usepackage{graphicx}
\usepackage{longtable}
\usepackage[all,cmtip]{xy}
\usepackage{upref}
\usepackage{fixltx2e}
\usepackage{hyperref}
\usepackage{faktor}
\usepackage[hang,small,bf]{caption}
\usepackage{mathtools}
\usepackage{nccmath}
\usepackage{hyperref}

\topmargin=-1cm
\oddsidemargin=-0.5cm
\textwidth=16.6cm
\textheight=23cm

\newtheorem{defn}{Definition}
\newtheorem*{defn*}{Definition}

\newtheorem*{corA}{Theorem A1}

\newtheorem*{teor}{Theorem 1 (\cite{AB,Con1,Merry})}

\newtheorem*{teorA}{Theorem A}
\newtheorem*{teorC}{Theorem C}
\newtheorem*{teorB}{Theorem B}
\newtheorem{pro}[defn]{Proposition}

\newtheorem{lemma}[defn]{Lemma}

\newcommand{\tom}{\tilde{\Omega}}

\newcommand{\U}{\mathcal{U}}
\newcommand{\C}{\mathcal{C}}

\newcommand{\K}{\mathrm{K}}
\newcommand{\B}{\mathcal{B}}
\newcommand{\V}{\mathcal{V}}
\newcommand{\W}{\mathcal{W}}

\newcommand{\Z}{\mathcal{Z}}
\newcommand{\rp}{(0,+\infty)}
\newcommand{\R}{\mathbb{R}}
\newcommand{\Q}{\mathrm{Q}}
\newcommand{\trace}{\mathrm{trace}}
\newcommand{\M}{\mathcal{M}}
\newcommand{\Msec}{\mathrm{Sec}^{\Omega}_k}
\newcommand{\Mric}{\mathrm{Ric}^{\Omega}_k}

\newcommand{\Mo}{M^{\Omega}_k}
\newcommand{\speed}{\dot{\gamma}}

\newcommand{\man}{c(g,\sigma)}

\newcommand{\hess}{\Q_{\gamma}(\eta_k)}
\newcommand{\lochess}{\Q_{0}(F_{\M}^{*} \eta_k)}

\newcommand{\prim}{S_k^{\sigma}}

\newcommand{\locprim}{(S_k^{\sigma}\circ \mathrm{\Phi_{\gamma}^{-1}})}

\newcommand{\flgr}{\Psi_{k}}
\newcommand{\pgr}{\mathrm{X}_k}
\newcommand{\dt}{\frac{D}{dt}}

\newcommand{\ken}{\Sigma_k}
\newcommand{\suk}{\Gamma_{\mathfrak{u}}}
\newcommand{\su}{\mathbf{s}^{\mathfrak{u}}}
\newcommand{\av}{\Delta\eta_k}
\newcommand{\pact}{\Delta \prim}

\numberwithin{equation}{section}

\title{Magnetic curvature and existence of a closed magnetic geodesic on low energy levels}
\author{Valerio Assenza \\ Heidelberg University \\ vassenza@mathi.uni-heidelberg.de}
\date{}
\begin{document}

\maketitle
\begin{abstract}
    To a Riemannian manifold $(M,g)$ endowed with a magnetic form $\sigma$ and its Lorentz operator $\Omega$ we associate an operator $M^{\Omega}$, called the \textit{magnetic curvature operator}. Such an operator encloses the classical Riemannian curvature of the metric $g$ together with terms of perturbation due to the magnetic interaction of $\sigma$. From $M^{\Omega}$ we derive the \textit{magnetic sectional curvature} $\mathrm{Sec}^{\Omega}$ and the \textit{magnetic Ricci curvature} $\mathrm{Ric}^{\Omega}$ which generalize in arbitrary dimension the already known notion of magnetic curvature previously considered by several authors on surfaces. On closed manifolds, under the assumption of $\mathrm{Ric}^{\Omega}$ being positive on an energy level below the Ma\~n\'e critical value, with a Bonnet-Myers argument, we establish the existence of a contractible periodic orbit. In particular, when $\sigma$ is nowhere vanishing, this implies the existence of a contractible periodic orbit on every energy level close to zero. Finally, on closed oriented even dimensional manifolds, we discuss about the topological restrictions which appear when one requires $\mathrm{Sec}^{\Omega}$ to be positive.
  \end{abstract}
\section{Introduction and results}
\subsection{Magnetic systems and closed magnetic geodesics}
A magnetic system is the data of $(M,g,\sigma)$, where $(M,g)$ is a closed Riemannian manifold and $\sigma$ is a closed 2-form on $M$ and in this context it is referred as the \textit{magnetic form}. The condition that $\sigma$ is closed generalizes the fact that, in Euclidean space, magnetic fields are divergence-free according to Maxwell's equations. A bundle operator $\Omega : TM \to TM$, called Lorentz operator, is associated with $\sigma$ and the metric $g=\langle \ , \ \rangle $ in the following sense:
\begin{equation} \label{compa}
    \langle v , \Omega(w) \rangle = \sigma(v,w),  \ \forall v,w\in TM. 
\end{equation}
Observe that $\Omega$ is antisymmetric with respect to $g$. In this setting, denoting by $\frac{D}{dt}$ the covariant derivative associated to the Levi-Civita connection $\nabla$, the dynamics of a charged particle, moving on $(M,g)$ under the influence of the magnetic field $\sigma$, is described by the second-order differential equation: 
\begin{equation}\label{mg}
    \frac{D}{dt}\speed=\Omega(\speed)  . 
\end{equation}
A solution $\gamma: \R \to M$ to~\eqref{mg} is called a  \textit{magnetic geodesic}. Observe that when there is no magnetic interaction, i.e. $\Omega=0$, the above equation is reduced to the equation of standard geodesics. The energy $E(p,v)=\frac{1}{2}\langle v , v \rangle $ is constant along magnetic geodesics so that the \textit{magnetic flow} 
\begin{equation}\label{flussomagnetico}
\mathbf{\Phi}^{\sigma}:TM \times \R \to TM,
\end{equation}
obtained by lifting equation \eqref{mg} to $TM$, leaves invariant the sets $\ken= \{ (p,v), \ |v|=\sqrt{2k} \}$, with $k\in (0,+\infty)$. A \textit{closed magnetic geodesic} is a periodic magnetic geodesic and one of the most relevant interests in the theory is the study of existence, multiplicity and free homotopy class of closed magnetic geodesic in a given $\ken$; see for instance \cite{intro2},\cite{intro1} or also \cite{intro3}. In a variational setting, closed magnetic geodesic with energy $k$ are precisely zeros for a suitable closed 1-form $\eta_k$, called the \textit{magnetic action form}, which is defined on $\M$, the space of absolutely continuous loops with $L^2$-integrable derivative and free period \cite{AB,Merry}. Such a form always admits a local primitive and generalizes the differential of the classical action functional which is globally defined for example if the magnetic form $\sigma$ is exact on the base manifold $M$. When the magnetic form is weakly exact, namely $\tilde{\sigma}=p^* \sigma$ is exact on the universal cover $p: \Tilde{M} \to M$, we can extend a local primitive $\prim$ of $\eta_k$ to the whole connected component of contractible elements of $\M$ which we denote by $\M_0$. Such a primitive it is given for instance by 
\begin{equation}\label{primcon}
\prim:\M_0 \to \R  , \  \prim(\gamma)=\int_0^T \Big\{  \frac{|\speed|^2}{2}+k  \Big\} \ \mathrm{d}t + \int_{B^2} D_\gamma^* \sigma  ,
\end{equation} 
where $D_{\gamma} : B^2 \to M$ is an arbitrary capping disk for $\gamma$. Since $\sigma$ is weakly exact if and only if $[\sigma] \in H^2(M,\R)$ vanishes over $\pi_2(M)$, definition \eqref{primcon} is independent of the choice of $D_{\gamma}$. Among the values of the energy, the Ma\~n\'e critical value $c=c(g,\sigma) \in [0,+\infty]$ plays a crucial role in the theory. If $\sigma$ is weakly exact, define $c$ by 
\begin{equation}\label{mane}
c= \inf \{ k\geq0 \ | \ \prim(\gamma)\geq 0, \ \forall \gamma \in \M_0 \}.
\end{equation}
If $\sigma$ is not weakly exact we set $\man=+\infty$. Observe that $c=0$ if and only if $\sigma =0$. Moreover, $c$ is finite if and only if $\tilde{\sigma}$ admits a bounded primitive, i.e. there exists $\tilde{\theta} \in \Omega^1(\tilde{M})$ such that $ \mathrm{d}\tilde{\theta} = \tilde{\sigma}$ and 
\begin{equation}\label{boundedprimitive}
\sup_{x \in \tilde{M}} |\tilde{\theta}_x| < +\infty ,
\end{equation}
where with abuse of notation, we write $|\cdot |$ the dual norm on $\tilde{M}$  induced by the lifted metric $p^*g$. Generally the magnetic dynamics can drastically differ on $\Sigma_k$ depending on whether $k$ is above, below or equal to $c$. For instance, this becomes evident when we look at the existence of closed magnetic geodesics. 

\begin{teor}\label{teorema1} Let $(M,g,\sigma)$ be a magnetic system and $c \in [0,+\infty]$ its M\~an\'e critical value.

\begin{itemize}
\item[(i)] If $\pi_1(M)$ is non trivial and $k>c$, then there exists a closed magnetic geodesic with energy $k$ in each non trivial homotopy class.
\item[(ii)] If $M$ is simply connected and $k>c$, then there exists a contractible closed magnetic geodesic with energy $k$.
\item[(iii)] For almost all $k \in (0,c)$ there exists a contractible closed magnetic geodesic with energy $k$.
\end{itemize}

\end{teor}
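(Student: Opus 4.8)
The plan is to treat all three items variationally, as statements about critical points of the free-period action (equivalently, zeros of the magnetic action form $\eta_k$ on $\M$), and to split the discussion into the supercritical regime $k>\man$ (items (i)--(ii)) and the subcritical one $k<\man$ (item (iii)). A preliminary observation: if $\sigma$ is not weakly exact then $\man=+\infty$, so $k>\man$ is impossible and (i)--(ii) are vacuous; hence in their proof one may assume $\sigma$ weakly exact, in which case $\prim$ of \eqref{primcon} is a global primitive of $\eta_k$ on $\M_0$, and, lifting to the universal cover $p\colon\tilde M\to M$, the form $\tilde\sigma=\mathrm d\tilde\theta$ is exact so that the magnetic system becomes a Tonelli Lagrangian system with Lagrangian $\tfrac12|v|^2+\tilde\theta(v)+k$; closed magnetic geodesics on $M$ in a free homotopy class $h$ then correspond to $\tilde\theta$-action critical configurations on $\tilde M$ joining a point $x$ to $h\cdot x$. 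One also uses here the known fact that, for weakly exact $\sigma$, the value $\man$ equals the Ma\~n\'e critical value of the universal cover.

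For (i) and (ii) the mechanism is the Palais--Smale condition above $\man$. First I would isolate the key compactness statement: for $k$ in a compact subinterval of $(\man,+\infty)$ there are constants such that every sequence $(\gamma_n)$ --- in $\M_0$, or in the fixed homotopy class after lifting --- with $\prim(\gamma_n)$ bounded and $\mathrm d\prim(\gamma_n)\to 0$ has periods bounded above and bounded away from $0$, hence subconverges in $\M$ to a closed magnetic geodesic of energy $k$; the hypothesis $k>\man$ is used precisely to bound the periods from above and to prevent the action from escaping to $-\infty$ along long, slow loops. Granting this: for (i) one minimises the action in a prescribed non-trivial free homotopy class (on the universal cover, over configurations joining $x$ to $h\cdot x$ and over $x$); the infimum is finite and attained, and the minimiser is non-constant because constant loops are contractible, so it is the desired periodic orbit. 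For (ii), $M$ simply connected gives $\M=\M_0$ and plain minimisation of $\prim$ only yields constant loops (their action tends to $0$); instead I would use that the free loop space $\Lambda M=\M_0$ of a closed simply connected manifold carries non-trivial topology relative to its constant loops (the evaluation $\Lambda M\to M$ has non-contractible fibre $\Omega M$), and run a Lusternik--Fet / minimax scheme over a positive-dimensional family linking the critical manifold of constant loops, obtaining a positive minimax value which, by Palais--Smale, is realised by a non-constant contractible closed magnetic geodesic.

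For (iii) the Palais--Smale condition genuinely fails, because $k<\man$ means precisely that $\prim$ is unbounded below on $\M_0$; here the tool is Struwe's monotonicity argument (in the non weakly exact case, where $\man=+\infty$, one works with the multivalued version of $\prim$ on a suitable cover of $\M_0$, or equivalently invokes the symplectic-topological formulation: almost existence \`a la Hofer--Zehnder, or Rabinowitz--Floer homology). For each fixed $k<\man$ the functional $\prim$ has a mountain-pass geometry --- loops near the constants have action close to $k\,T>0$, while by definition of $\man$ some contractible loop has negative action --- so one defines the mountain-pass value $c_{\mathrm{MP}}(k)\in(0,+\infty)$. Since $\partial_k\prim(\gamma)$ equals the period of $\gamma$, hence is positive, $k\mapsto\prim$ is pointwise non-decreasing, so $k\mapsto c_{\mathrm{MP}}(k)$ is non-decreasing and therefore differentiable at almost every $k\in(0,\man)$. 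At each such $k$, a Struwe-type estimate converts differentiability into an a priori period bound along a well-chosen nearly optimal family for the minimax problem, which restores enough compactness to extract a limit; this limit is a critical point of $\prim$ at a positive level, i.e.\ a non-constant contractible closed magnetic geodesic of energy $k$, proving the statement for almost every $k<\man$.

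The hard part is this last step: below $\man$ there is no compactness and one must manufacture it from the monotonicity of $c_{\mathrm{MP}}$, simultaneously controlling the period and length of nearly optimal minimax configurations (so they do not run off to infinity) and keeping $c_{\mathrm{MP}}(k)$ bounded below by a positive constant (so the mountain-pass level does not collapse onto the critical manifold of constant loops). Closely related, and common to all three items, is the free-period pathology that Palais--Smale sequences may have period tending to $0$ and degenerate to constant loops; excluding this --- by the standard modification of the action functional for small periods, or by a direct a priori lower bound on the period --- is the recurring technical subtlety on which the whole argument rests.
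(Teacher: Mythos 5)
Your proposal is correct in outline and follows essentially the same approach as the paper, which states Theorem 1 as a known result from the cited references and only sketches the argument: for $k>c$ a primitive of $\eta_k$ exists on each connected component of $\M$, is bounded below and satisfies Palais--Smale, yielding minimizers in nontrivial free homotopy classes and a Lusternik--Fet type minimax in the simply connected case, while for almost every $k \in (0,c)$ one runs the Struwe monotonicity argument on the minimax geometry over paths starting at short loops (with the action variation along paths and a nontrivial class of $\pi_2(M)$ replacing the global primitive when $\sigma$ is not weakly exact, exactly as the paper does later in Section \ref{proofA}). Your split into the supercritical and subcritical regimes, and the compactness/period-control issues you isolate, are the same ones the paper and its references rely on.
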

In fact, when $c$ is finite the magnetic form admits a primitive on each connected component of $\M$. If $k>c$ such a primitive is bounded from below and satisfies the Palais-Smale condition; thus the existence of a zero for $\eta_k$, which is a minimizer of any primitive, is obtained via classical variational methods through minimization. Below $c$ such compactness conditions are not satisfied anymore and the variational approach becomes extremely delicate. Despite that, for almost all energy $k$, with a Struwe monotonicity argument \cite{struwe}, one can construct converging Palais-Smale sequence which origin from a minimax geometry of $\eta_k$ on the set of loops with short length. In general, it is still an open question if we can extend the existence of closed magnetic geodesic, contractible or not, to the whole interval $(0,c)$. Under additional assumptions, some progress had been made. For exact magnetic systems on compact oriented surfaces, Taimanov in \cite{taim2}, Contreras, Macarini and Paternain in \cite{CMP} and later Asselle and Mazzucchelli in \cite{AsMaz}, show the existence of a closed magnetic geodesic with energy $k$ for all $k\in \rp$. Another relevant result is in the work of Ginzburg-G\"urel \cite{GB}, refined by Usher in \cite{ush}, where they establish for symplectic magnetic forms the existence of contractible closed magnetic geodesics with energy $k$, for all $k$ close to zero. In this work we extend the result of Ginzburg-G\"urel and Usher from symplectic magnetic forms to nowhere vanishing magnetic forms. 
\begin{corA}
    Let $(M,g,\sigma)$ be a magnetic system. If $\sigma$ is nowhere vanishing, then there exists a positive real number $\rho$ such that for every $k \in (0, \rho)$ there exists a contractible closed magnetic geodesic with energy $k$.
\end{corA}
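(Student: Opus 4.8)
The plan is to deduce the statement directly from Theorem~A. That theorem produces a contractible closed magnetic geodesic with energy $k$ as soon as $k$ lies strictly below the Ma\~n\'e critical value $\man$ and the magnetic Ricci curvature $\Mric$ is everywhere positive on the energy level $\ken$, so it is enough to exhibit $\rho>0$ for which both conditions hold for every $k\in(0,\rho)$. The condition $k<\man$ is harmless: since $\sigma$ is nowhere vanishing it is in particular not identically zero, hence $\man>0$ (and $\man=+\infty$ unless $\sigma$ is weakly exact), so one simply takes $\rho\le\man$. The substance is therefore to show that $\Mric>0$ on $\ken$ once $k$ is small enough.

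To do this I would expand $\Mric$ from the definition of the magnetic curvature operator $\Mo$. Along a magnetic geodesic of energy $k$ the velocity has norm $\sqrt{2k}$; grouping the contributions to $\Mric(v)$ by their order in this norm should give, for $v\in\ken$,
\[
\Mric(v)\ \ge\ \frac{c_0}{k}\,\trace\!\bigl(-\Omega^2|_{v^\perp}\bigr)\ -\ \frac{c_1}{\sqrt{k}}\ -\ c_2 ,
\]
with $c_0>0$ and $c_1,c_2\ge0$ depending only on $(M,g,\sigma)$: the leading term comes from the part of the magnetic curvature quadratic in $\Omega$, the $k^{-1/2}$ term collects the parts linear in $\Omega$ and in $\nabla\Omega$, and $c_2$ bounds the Riemannian curvature term. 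Everything then reduces to a uniform positive lower bound for $\trace(-\Omega^2|_{v^\perp})$, which is exactly where the hypothesis on $\sigma$ enters. At each $p\in M$ the operator $\Omega_p$ is $g$-antisymmetric and, by hypothesis, nonzero, so its largest singular value $\lambda(p)=\lVert\Omega_p\rVert_{\mathrm{op}}$ is positive; then $\trace(-\Omega_p^2)=\lvert\Omega_p\rvert^2\ge 2\lambda(p)^2$ while $\langle-\Omega_p^2 v,v\rangle=\lvert\Omega_p v\rvert^2\le\lambda(p)^2$ for a unit vector $v$, whence $\trace(-\Omega_p^2|_{v^\perp})=\lvert\Omega_p\rvert^2-\lvert\Omega_p v\rvert^2\ge\lambda(p)^2$. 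Since $M$ is closed and $p\mapsto\lambda(p)$ is continuous and strictly positive, $\delta:=\min_M\lambda^2>0$, so $\trace(-\Omega^2|_{v^\perp})\ge\delta$ for every $v\in\ken$ and every $k$. Plugging this back, $\Mric(v)\ge c_0\delta/k-c_1/\sqrt{k}-c_2\to+\infty$ as $k\to0^+$, uniformly in $v\in\ken$, so there is $\rho_0>0$ with $\Mric>0$ on $\ken$ for all $k\in(0,\rho_0)$; taking $\rho=\min(\rho_0,\man)$ (or $\rho=\rho_0$ if $\man=+\infty$) and applying Theorem~A on each level $k\in(0,\rho)$ would finish the proof.

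Granting Theorem~A, the one point I expect to require care is the \emph{sign} of the quadratic term — that the magnetic perturbation enters $\Mo$ so that the nonnegative operator $-\Omega^2$ appears with a positive coefficient; this is what gives the low-energy regime its Bonnet--Myers flavour and is already encoded in the definition of $\Mo$. It is worth emphasising that only positivity of the \emph{Ricci} curvature is needed, i.e.\ of the trace $\trace(-\Omega^2|_{v^\perp})$, which is positive as soon as $\Omega_p\ne0$; asking instead that the magnetic \emph{sectional} curvature $\Msec$ be positive would force $\Omega_p$ to be invertible, hence $\sigma$ symplectic and $M$ even-dimensional. That is precisely what lets the conclusion cover every nowhere-vanishing $\sigma$, extending the symplectic results of Ginzburg--G\"urel and Usher. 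The real work — the magnetic Bonnet--Myers estimate, the minimax sweepout by short loops and the control of Palais--Smale sequences of $\eta_k$ below $\man$ — all sits inside Theorem~A; the corollary is just this curvature computation layered on top of it.
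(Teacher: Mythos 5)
Your proposal is correct and follows essentially the same route as the paper: Theorem~A applied below the (positive, since $\sigma\neq 0$) Ma\~n\'e critical value, together with the observation that the $k$-independent quadratic term $A^{\Omega}$ in $\Mo$ has trace on $v^{\perp}$ bounded below by a positive constant when $\sigma$ is nowhere vanishing (the paper's Lemma \ref{aomega} and Proposition \ref{cor1}), while the Riemannian and $\nabla\Omega$ terms are $O(k)$ and $O(\sqrt{k})$. The only cosmetic differences are your normalization (the $1/k$-scaling versus the paper's convention, where $\Mric$ is $O(k)-O(\sqrt{k})+\mathrm{const}>0$) and your lower bound $\trace(-\Omega^2|_{v^\perp})\geq\lambda_{\max}^2$ in place of the paper's explicit formula $\trace(A^{\Omega}(v,\cdot))=\tfrac14|\Omega|^2+\tfrac12|\Omega v|^2$.
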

Observe that the assumption on a closed 2-form to be nowhere vanishing presents weak topological obstructions and the set of such systems is very rich. Indeed, in dimension two, being nowhere vanishing is the same as being symplectic, and it is well known that every oriented compact surfaces admits a symplectic form. The situation is drastically different in dimension bigger than 2. There are topological obstructions for a manifold to carry a symplectic form; for instance, it needs to be even-dimensional and all cohomology groups in even degree must be non-zero. On the other hand, as treated in \cite[Section~20.3]{eliashberg}, every smooth manifold of dimension bigger or equal than 3 admits a nowhere vanishing closed 2-form in each cohomology class. Let us anticipate that Theorem A1 is an immediate corollary of the main result of this paper which shows the existence of contractible closed magnetic geodesic for levels of the energy below $c$ and positively curved in a sense which we describe in the next subsection. 
\subsection{Magnetic Curvature}
Let $SM$ be the unit sphere bundle and consider $E$ and $E^1$, the bundles of complementary and unitary complementary direction over $SM$. In particular, a fiber of $E$ and $E^1$ at the point $(p,v)\in SM$ is the set $E_{(p,v)}=\{ w \in T_pM \ | \ \langle v,w \rangle =0 \} $ and $E^1_{(p,v)}=\{ w \in S_pM \ | \ \langle v,w \rangle =0 \} $. Let $k\in \rp$ and denote by $\mathrm{Sec}$ the classical Riemannian sectional curvature. Define the \textit{k-magnetic sectional curvaure} $\Msec : E^1 \to \R$ as
\begin{equation}\label{msec}
    \Msec(v,w)=  2k\mathrm{Sec}(v,w)-\sqrt{2k} \langle(\nabla_w \Omega )(v),w \rangle+\frac{3}{4}\langle w,\Omega(v)\rangle ^2 + \frac{1}{4}|\Omega(w)|^2.  
\end{equation}
On compact oriented surfaces, \eqref{msec} coincides with the standard magnetic curvature that was introduced in the work of Paternain and Paternain \cite{gmpaternain}. In higher dimension, a similar definition of $\mathrm{Sec}^{\Omega}$ appeared in a general Hamiltonian context in the paper of Wojtkowski \cite{woj}. In particular, it can be deduced that if $\Msec$ is negative, then the magnetic flow restricted to $\Sigma_k$ is of Anosov type; see also  \cite{Gou} and \cite{Gro1} for previous results in this direction. In Lemma \ref{lemmasec} we link $\Msec$ with the second variation of a primitive of $\eta_k$ around one of its zero. \\

We point out that definition \eqref{msec} of $\Msec$ can be naturally derive from an appropriate curvature operator which encloses the classical Riemannian curvature induced by $g$ together with terms of perturbation depending on $\sigma$. Let $R$ be the Riemannian curvature operator and write $\Omega^2=\Omega \circ \Omega$. Consider the bundle operators $R^{\Omega}_k,A^{\Omega}:E \to E$ given by
\begin{equation}\label{Rmag}
R^{\Omega}_k(v,w)= 2kR(w,v)v-\sqrt{2k}  \left((\nabla_w \Omega )(v)-\frac{1}{2}(\nabla_v \Omega )(w) +\frac{1}{2} \langle(\nabla_v \Omega )(w),v\rangle v \right) , 
\end{equation}
\begin{equation}\label{operatorA}
    A^{\Omega}(v,w)=\frac{3}{4}\langle w, \Omega(v) \rangle \Omega(v) -\frac{1}{4}\Omega^2(w)- \frac{1}{4} \langle \Omega(w),\Omega(v) \rangle v     .
\end{equation}
Define the \textit{magnetic curvature operator} $\Mo:E \to E$ at the energy level $k$ as:
\begin{equation} \label{mco}
    \Mo(v,w)=R^{\Omega}_k(v,w)+A^{\Omega}(v,w)  .
\end{equation}
Observe that $M^{\Omega}_k(v,\cdot)$ is unique symmetric operator on $E_{(p,v)}$ having $\mathrm{Sec}^{\Omega}_k(v,\cdot)$ as associated quadratic form. This last remark is deduced through a straightforward computation with the help of the standard identity
\begin{equation*}
\mathrm{d}\sigma(v,w,z) = \langle \left(\nabla_{w}\Omega\right)(v), z \rangle + \langle \left(\nabla_{z}\Omega\right)(w), v \rangle + \langle \left(\nabla_{v}\Omega\right)(z), w \rangle = 0 .
\end{equation*} 
Let us point out that the accuracy of definition \eqref{mco} is also highlighted in the latest work of the author in collaboration with Marshall Reber and Terek \cite{AMT}, where $\Mo$ is naturally derived by approaching the linearization of equation \eqref{mg}. 
\\

In the classical way, we derive from $\Mo$ the magnetic curvature functions. Precisely, the \textit{k-magnetic sectional curvature} $\Msec$ can be now defined as
\begin{equation}\label{dmsec}
 \Msec(v,w)=\langle \Mo(v,w),w \rangle.
\end{equation}
The \textit{magnetic Ricci curvature} $\Mric:SM \to \R$ at the $k$-energy level is the function given by
\begin{equation}\label{dmricc}
\Mric(v)=\trace(\Mo(v,\cdot)).
\end{equation}
If Ric denote the Riemannian Ricci curvature, we obtain for definition \eqref{dmricc} the following expression 
\begin{equation}\label{mricc}
    \Mric(v)=2k\mathrm{Ric}(v)-  \sqrt{2k} \ \trace\Big((\nabla \Omega)(v)\Big)+\trace\Big(A^{\Omega}(v,\cdot)\Big)   .
\end{equation}
In the context of the second variation of the action, a magnetic Ricci curvature defined as the trace of the operator $R^{\Omega}_k$ was considered by Bahri and Taimanov in \cite{TB}. To the author's best knowledge, this is the first time the magnetic curvature and magnetic curvature functions have been defined in full generality.
\\

In this paper we use this new definition of magnetic curvature to approach the problem of finding close magnetic geodesic with energy $k$. In Lemma \ref{Bonnet-Myers} we prove in the context of magnetic systems a Bonnet-Myers theorem stating that if $\gamma$ is a zero of $\eta_k$ and $\Mric>0$, then a bound on the Morse index of $\gamma$ implies a bound on its period. This result together with index estimates for zeroes of $\eta_k$ obtained by the Struwe monotonicity argument argued in Lemma \ref{index1} and Lemma \ref{index2}, allow us to recover a converging Palais-Smale sequence and show the existence of a contractible closed magnetic geodesic. Before stating the main theorem, we remark that techniques to recover Palais-Smale through index estimates were previously considered for instance by Bahri and Coron in \cite{bagia}, where they were approaching the Kazdan-Warner problem; by Coti Zelati, Ekeland and Lions in \cite{coti} in a general variational setting and by Benci and Giannoni in \cite{begi} in the context of billiards.
\begin{teorA}\label{TeoremA}
    Let $(M,g,\sigma)$ be a magnetic system. If $k\in (0,c)$ is such that $\Mric>0$, then there exists a contractible closed magnetic geodesic with energy $k$.
\end{teorA}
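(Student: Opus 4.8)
The plan is to combine the minimax scheme that produces closed magnetic geodesics for almost every energy level (the mechanism behind Theorem~\ref{teorema1}(iii)) with the magnetic Bonnet--Myers estimate and the index bounds for the resulting orbits, and then to use the positivity of $\Mric$ to pass to the limit at the possibly exceptional energy value. Assume first that $\sigma$ is weakly exact, so that $\man<\infty$ and the primitive $\prim$ of the action form $\eta_k$ is globally defined on $\M_0$ by~\eqref{primcon}; the non weakly exact case ($\man=+\infty$) is run in parallel with the analogous minimax over $\M_0$ adapted to the closed but non exact form $\eta_k$. The starting point is the mountain--pass geometry of $\prim$ on $\M_0$: constant loops are critical with $\prim=0$; from the pointwise inequality $\tfrac12|\speed|^2+k\ge\sqrt{2k}\,|\speed|$ and the isoperimetric bound $|\int_{B^2}D_\gamma^*\sigma|\le C\,\mathrm{length}(\gamma)^2$ for short contractible loops, one gets $\ell_0>0$ with $\prim(\gamma)\ge\tfrac12\sqrt{2k}\,\mathrm{length}(\gamma)$ whenever $0<\mathrm{length}(\gamma)<\ell_0$, so short non-constant loops form a positive barrier of height $\varepsilon_0>0$ (uniform for $k$ near a fixed $k_0$); and since $k_0<\man$ there is, by~\eqref{mane}, a loop $\gamma_*\in\M_0$ with negative $k_0$-action, hence with negative $k$-action for every $k\le k_0$ and with $\mathrm{length}(\gamma_*)\ge\ell_0$. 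Fixing this $\gamma_*$ once and for all, the minimax value
\begin{equation*}
c_k=\inf_{u}\ \max_{s\in[0,1]}\ \prim(u(s)),
\end{equation*}
over continuous paths $u$ in $\M_0$ joining a constant loop to $\gamma_*$, satisfies $c_k\ge\varepsilon_0>0$, is finite, and is nondecreasing in $k$ on $(0,k_0]$.

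Next I would run Struwe's monotonicity argument on the family $k\mapsto c_k$. Since the kinetic-plus-potential part of $\prim$ increases pointwise with $k$ while the magnetic term is $k$-independent, $k\mapsto c_k$ is monotone, hence differentiable at almost every $k$; at each point of differentiability the monotonicity trick yields a \emph{bounded} Palais--Smale sequence for $\prim$ at level $c_k$, whose limit is a zero $\gamma_k$ of $\eta_k$ with $\prim(\gamma_k)=c_k>0$ --- in particular a non-constant contractible closed magnetic geodesic of energy $k$ --- and, because the minimax is over one-parameter families, Lemma~\ref{index1} and Lemma~\ref{index2} give the Morse index bound $\mathrm{ind}(\gamma_k)\le 1$. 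This reproves Theorem~\ref{teorema1}(iii) with the extra control on the Morse index.

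To upgrade ``almost every $k$'' to ``every $k$ with $\Mric>0$'', fix $k_0\in(0,\man)$ with $\Mric>0$ on $SM$ at energy $k_0$; by continuity of $\Mric$ in $k$ and compactness of $SM$ there are $\delta>0$ and $\kappa>0$ with $\Mric\ge\kappa$ on $SM$ for all $k\in(k_0-\delta,k_0]$. Choose a sequence $k_n\nearrow k_0$ of ``good'' energies in $(k_0-\delta,k_0)$ with closed magnetic geodesics $\gamma_n:=\gamma_{k_n}$ as above: energy $k_n$, contractible, $\mathrm{ind}(\gamma_n)\le 1$, and actions $c_{k_n}\in[\varepsilon_0,\,c_{k_0}]$ bounded away from $0$ and from $+\infty$. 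By the magnetic Bonnet--Myers Lemma~\ref{Bonnet-Myers}, the uniform bounds $\Mric\ge\kappa$ and $\mathrm{ind}(\gamma_n)\le 1$ force a uniform upper bound $T_n\le T_*$ on the periods; moreover $T_n$ is bounded away from $0$, since $T_n\to0$ would make $\mathrm{length}(\gamma_n)\to0$ (the speed is $\sqrt{2k_n}$) and hence $c_{k_n}=\prim(\gamma_n)\to0$, contradicting $c_{k_n}\ge\varepsilon_0$. With periods in a compact subinterval of $(0,\infty)$ and bounded speeds, the $\gamma_n$ are equi-Lipschitz, and differentiating $\dt\speed_n=\Omega(\speed_n)$ shows their derivatives are equi-Lipschitz as well; by Arzel\`a--Ascoli a subsequence converges in $C^1$ to a map $\gamma_0$ solving~\eqref{mg}, periodic of period $\lim_n T_n>0$, of speed $\sqrt{2k_0}>0$ (hence non-constant) and contractible as a $C^0$-limit of contractible loops. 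Thus $\gamma_0$ is the desired contractible closed magnetic geodesic with energy $k_0$.

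The main obstacle is precisely the failure of the Palais--Smale condition for $\prim$ below $\man$: neither minimizing nor minimax sequences need stay bounded in $\M_0$, so the existence half (producing the $\gamma_n$ at good $k_n$ \emph{with} controlled Morse index) is delicate and rests on the Struwe monotonicity package of Lemma~\ref{index1} and Lemma~\ref{index2}; and, crucially, the compactness needed to reach the exceptional level $k_0$ is not automatic --- it is supplied only by the interaction between the index bound and the magnetic Bonnet--Myers estimate of Lemma~\ref{Bonnet-Myers}, which is where the hypothesis $\Mric>0$ enters in an essential way.
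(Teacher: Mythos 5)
Your proposal is correct and follows essentially the same route as the paper: the Struwe monotonicity/minimax scheme (Lemma \ref{index1}, Lemma \ref{index2}) produces zeroes of $\eta_{k_n}$ with Morse index at most $1$ for a sequence of good energies $k_n\to k$, the uniform positivity of $\Mric$ near $k$ together with the magnetic Bonnet--Myers estimate (Lemma \ref{Bonnet-Myers}) bounds the periods, and one passes to the limit to obtain a contractible zero of $\eta_k$. The only (harmless) deviation is the final compactness step, where the paper treats $\{\gamma_n\}$ as a vanishing sequence for $\eta_k$ and excludes period collapse via the absence of rest points of the magnetic flow, while you rule out $T_n\to 0$ through the uniform lower bound on the minimax level and conclude by Arzel\`a--Ascoli on the solutions themselves.
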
 
With a different approach, a weaker result was established by Bahri and Taimanov in \cite{TB}. By assuming the trace of $R^{\Omega}_k$ to be positive, the authors showed the same statement for exact magnetic systems. It is crucial that they did not consider $A^{\Omega}$ in their definition of magnetic curvature. Indeed, in Lemma \ref{aomega} we prove that the trace of $A^{\Omega}$ is always non negative and it is equal to zero at $p\in M$ if and only if $\sigma_p=0$. In accord with \eqref{mricc}, this last consideration enlarges the number of situations where we can apply Theorem A and it has a strong impact when we look at magnetic systems on low energy levels. In particular, in Proposition \ref{cor1}, we show that if the magnetic form is symplectic, then $\Msec >0$ is positive for $k$ close to zero; analogously if $\sigma$ is nowhere vanishing, then $\Mric$ is positive for values of $k$ close to zero. Intuitively we can look at symplectic magnetic systems and nowhere vanishing magnetic systems with small energies as the magnetic analogue of positively sectionally curved and positively Ricci curved manifolds in Riemannian geometry. Observe that Theorem A and Proposition \ref{cor1} immediately implies Theorem A1 stated in the previous section. \\ 

We conclude the introduction by evidencing some topological restrictions that appear when we require $\Msec$ to be positive. For instance, in dimension 2, a symplectic magnetic system can be fully characterized in terms of positive magnetic sectional curvature for low energies.
\begin{teorB}
Let $(M,g,\sigma)$ be a magnetic system on a closed oriented surface. If there exists a positive real number $k_0$ such that $\Msec>0$ for every $k \in (0,k_0)$, then either $\sigma$ is symplectic, or $\sigma=0$ and $g$ has positive Gaussian curvature  (and $M=S^2$).
\end{teorB}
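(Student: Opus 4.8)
The plan is to exploit two-dimensionality to make $\Msec$ completely explicit and then to run a unique-continuation argument for the function that represents $\sigma$. Let $j\colon TM\to TM$ be the rotation by $\pi/2$ determined by $g$ and the orientation; it is parallel, $\nabla j=0$, and every $2$-form on $M$ can be written $\sigma=f\,\mu_g$ with $f\in C^\infty(M)$, so that $\Omega=-f\,j$. Fix $p\in M$, $v\in S_pM$, and recall that $E^1_{(p,v)}=\{\pm jv\}$. A direct computation using $\nabla j=0$ gives, for $w=\pm jv$, the identities $\langle w,\Omega(v)\rangle^2=|\Omega(w)|^2=f(p)^2$ and $\langle(\nabla_w\Omega)(v),w\rangle=-\mathrm df(jv)$, while $\mathrm{Sec}(v,w)=K(p)$ is the Gaussian curvature of $M$ at $p$. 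Substituting into \eqref{msec} gives, for both choices of $w$,
\[
\Msec(v,w)=2kK(p)+\sqrt{2k}\,\mathrm df(jv)+f(p)^2 .
\]
As $v$ runs over $S_pM$ the vector $jv$ runs over all of $S_pM$, so the requirement that $\Msec>0$ on the whole fibre of $E^1$ over $p$ for every $k\in(0,k_0)$ is equivalent to
\[
\varphi_p(t):=t^2K(p)-t\,|\nabla f(p)|+f(p)^2>0\qquad\text{for every }t\in(0,t_0),\quad t_0:=\sqrt{2k_0}.
\]

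Now suppose $\sigma$ is not symplectic, i.e.\ the closed set $Z:=f^{-1}(0)$ is non-empty; the goal is to deduce $Z=M$. For $p_0\in Z$ the condition reads $t\big(tK(p_0)-|\nabla f(p_0)|\big)>0$ on $(0,t_0)$; since $t>0$, letting $t\downarrow 0$ forces $\nabla f(p_0)=0$ (and $K(p_0)\ge 0$), after which $tK(p_0)>0$ on $(0,t_0)$ forces $K(p_0)>0$. Thus every point of $Z$ is a critical point of $f$ at which $K$ is positive. Fix $p_0\in Z$. By continuity, shrinking a neighbourhood $V$ of $p_0$ if necessary, one has $K\ge K(p_0)/2>0$ on $V$ and, whenever $\nabla f(p)\neq 0$ with $p\in V$, the minimizer $\tau_p:=|\nabla f(p)|/(2K(p))$ of the upward parabola $t\mapsto\varphi_p(t)$ lies in $(0,t_0)$. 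Evaluating the inequality at $t=\tau_p$ yields $|\nabla f(p)|^2<4K(p)\,f(p)^2$ at those $p$, an inequality that holds trivially where $\nabla f$ vanishes; hence $|\nabla f|\le C|f|$ on $V$ with $C:=2\sup_V\sqrt K<\infty$.

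It then remains to propagate the vanishing of $f$ off $p_0$. For $q\in V$ close to $p_0$, joining $p_0$ to $q$ by a minimizing geodesic $\gamma$ contained in $V$, the function $u:=(f\circ\gamma)^2$ satisfies $|u'|\le 2Cu$ with $u(0)=0$, so Gr\"onwall's inequality gives $u\equiv 0$ and $f(q)=0$. Therefore $Z$ is open as well as closed, and since $M$ is connected and $Z\neq\emptyset$ we get $Z=M$, i.e.\ $\sigma=0$. In that case \eqref{msec} reduces to $\Msec(v,w)=2kK(p)$, so the hypothesis forces $K>0$ everywhere on $M$, and the Gauss--Bonnet theorem on a closed oriented surface then forces $M=S^2$. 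The step I expect to be most delicate is the one producing the differential inequality $|\nabla f|\le C|f|$: it works only because one is allowed to test $\varphi_p$ precisely at its minimizer $\tau_p$, and $\tau_p$ lies in the admissible interval $(0,t_0)$ only near a zero of $f$ — which is exactly the point where the hypothesis "$\Msec>0$ on a whole range of low energies" (as opposed to at a single energy) is used.
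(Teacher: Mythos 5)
Your proposal is correct, and it follows the same global skeleton as the paper's proof: write $\sigma=f\mu_g$, reduce $\Msec>0$ on $(0,k_0)$ to the positivity of the quadratic $t^2K-t|\nabla f|+f^2$ for $t=\sqrt{2k}\in(0,\sqrt{2k_0})$ (the sign discrepancy with the paper's $-\sqrt{2k}\,\mathrm{d}b(Jv)$ is immaterial since one minimizes over $v\in S_pM$), and then show the zero set of $f$ is open as well as closed. Where you differ is in the openness step, and the difference is worth noting: the paper argues by contradiction at a maximum point $q$ of $|b|$ on a small ball about a zero, integrating the uniform bound $\sqrt{2k}\,|\mathrm{d}b|_\infty\le 2k\max K+|b(q)|^2$ along a path and making the specific choice $\sqrt{2k}=|b(q)|$, with a simultaneous shrinking of the radius and the energy; you instead evaluate the parabola at its vertex $\tau_p=|\nabla f(p)|/(2K(p))$ (admissible precisely near the zero set, which is where the full range $k\in(0,k_0)$ is used) to extract the clean pointwise inequality $|\nabla f|\le 2\sqrt{K}\,|f|$, and then propagate vanishing by Gr\"onwall along radial geodesics. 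Your version isolates the mechanism as a genuine unique-continuation/differential inequality statement, is sharper pointwise, and sidesteps the slightly delicate bookkeeping in the paper (the use of the uniform norm of $\mathrm{d}b$ and the joint choice of $r$ and $k$); the paper's version is more elementary in that it needs no ODE comparison, only the mean value–type estimate on a small ball. The endgame ($\sigma\equiv 0$ forces $2kK>0$, hence $K>0$ and $M=S^2$ by Gauss--Bonnet) coincides in both arguments.
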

The proof of Theorem B is purely topological and the reader can consult it independently from the rest of the paper in Section \ref{surfaces}. In General, by assuming $\Msec$ being positive, we also establish some topological restriction when $k>c$ and $M$ is a closed oriented even dimensional manifold, and for arbitrary $k$ when $M$ is a closed oriented surfaces and the magnetic form is exact. 
\begin{teorC}
Let $(M,g,\sigma)$ be a magnetic system on an even-dimensional oriented manifold. If $\Msec>0$ for some $k>c$, then $M$ is simply connected. Moreover, if $M$ is a compact oriented surface, $\sigma$ is exact and $\Msec>0$, then $M=S^2$ and $k\geq c$.
\end{teorC}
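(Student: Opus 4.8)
The plan is to deduce the first assertion from a magnetic analogue of Synge's theorem, the only analytic ingredient being Lemma~\ref{lemmasec}, and to obtain the second assertion by combining this with Gauss--Bonnet, the Bonnet--Myers Lemma~\ref{Bonnet-Myers}, and the description of $c$ on surfaces. Recall from Lemma~\ref{lemmasec} that if $\gamma$ is a zero of $\eta_k$ of period $T$ and $W$ is a $T$-periodic \emph{magnetically parallel normal field} along $\gamma$ (i.e.\ $\langle W,\speed\rangle\equiv0$ and $\dt W=\Omega(W)$), then the kinetic term in the Hessian $\Q_\gamma$ of any primitive of $\eta_k$ drops out and one is left with $\Q_\gamma(W)=-\int_0^T\langle\Mo(\nspeed,W),W\rangle\,\mathrm{d}t$; since $\Msec$ is the quadratic form of $\Mo(\nspeed,\cdot)$, the hypothesis $\Msec>0$ forces $\Q_\gamma(W)<0$ whenever $W\not\equiv0$. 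Two elementary facts about the transport $\dt W=\Omega(W)$ will be used repeatedly: its solution operator $P_t$ along $\gamma$ is a $g$-isometry (because $\Omega$ is $g$-antisymmetric), depends continuously on $t$, and equals the identity at $t=0$, so $P_t$ takes values in $\mathrm{SO}$; and $\speed$ itself solves $\dt\speed=\Omega(\speed)$, so $P_t$ carries $\speed(0)$ to $\speed(t)$ and therefore restricts to an isometry $E_{\gamma(0)}\to E_{\gamma(t)}$ of the normal spaces.

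For the first assertion, assume $\dim M$ even, $M$ oriented, $\Msec>0$ for some $k>c$, and, for contradiction, $\pi_1(M)\neq0$. Fix a nontrivial free homotopy class of loops. Since $k>c$, a primitive $\prim$ of $\eta_k$ is defined on the corresponding component of $\M$, is bounded below, and satisfies the Palais--Smale condition, so it attains its infimum at a non-constant closed magnetic geodesic $\gamma$ — the orbit furnished by Theorem~\ref{teorema1}(i); being a minimizer, $\Q_\gamma\geq0$. Now $P_T$ is an orientation-preserving isometry of the normal space $E_{\gamma(0)}=\speed(0)^\perp$, which has \emph{odd} dimension $\dim M-1$, and therefore admits $+1$ as an eigenvalue. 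Transporting a corresponding unit vector by $P_t$ yields a $T$-periodic magnetically parallel normal field $W$ along $\gamma$, and then $\Q_\gamma(W)<0$ contradicts $\Q_\gamma\geq0$. Hence $\pi_1(M)=0$, i.e.\ $M$ is simply connected.

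For the second assertion let $M$ be a closed oriented surface with $\sigma$ exact. If $\sigma=0$ then $\Msec=2k\,\mathrm{Sec}>0$, so $g$ has positive Gaussian curvature, Gauss--Bonnet gives $M=S^2$, and $c=0\leq k$. So assume $\sigma\neq0$ and write $\sigma=f\vol$; exactness forces $\int_Mf\,\vol=0$, so $f$ vanishes somewhere. The first step is $k\geq c$: using the formula $c=\tfrac12\big(\sup_\gamma\,(-\int_{D_\gamma}\sigma)/\ell(\gamma)\big)^2$, where the supremum runs over contractible loops $\gamma$ and $\ell$ is length, this is equivalent to the isoperimetric-type inequality $-\int_{D_\gamma}\sigma\leq\sqrt{2k}\,\ell(\gamma)$ for every contractible $\gamma$, which I would obtain from $\Msec>0$ — that positivity encodes the pointwise bound $|\nabla f|<\sqrt{2k}\,\mathrm{Sec}+f^2/\sqrt{2k}$ — either by an integral-geometric estimate on the disk $D_\gamma$, or by lifting to the universal cover and invoking the scalar comparison below. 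The second step is $M=S^2$: since $\Msec>0$ is an open condition in $k$ and $k\geq c$, there is $k'>c$ with $\mathrm{Sec}^{\Omega}_{k'}>0$ (take $k'=k$ when $k>c$, and $k'$ slightly above $c$ when $k=c$), and the first assertion — valid since $M$ is oriented and even-dimensional — then forces $M$ simply connected, so $M=S^2$. The scalar comparison used above is this: in dimension two $E$ is a line bundle, so along any magnetic geodesic the normal direction is spanned by a magnetically parallel field, automatically periodic on a closed orbit, and the magnetic Jacobi equation becomes $\ddot y+\Msec(t)\,y=0$; since $\Msec\geq\delta>0$ on the compact $\ken$, Sturm comparison shows every magnetic geodesic of energy $k$ has a conjugate point within time $\pi/\sqrt\delta$, which on the non-compact universal cover is incompatible with the existence of free-time-action minimizers between arbitrarily distant points once $k$ exceeds the Ma\~n\'e value of that cover.

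The genuinely analytic input is Lemma~\ref{lemmasec}: singling out the transport $\dt W=\Omega(W)$ — the one solved by $\speed$ — as precisely the one that annihilates both the kinetic term and the free-period direction of the Hessian, leaving $-\int\Msec$; once this is available the first assertion is a two-line Synge argument. In the surface case the hard step is $k\geq c$, i.e.\ converting the pointwise information carried by $\Msec>0$ into the global inequality $-\int_{D_\gamma}\sigma\leq\sqrt{2k}\,\ell(\gamma)$; the delicate regime there is $k$ below the Ma\~n\'e value of the universal cover, where one cannot appeal to minimizers on $\tilde M$ and must instead run the Jacobi comparison together with a finer variational argument.
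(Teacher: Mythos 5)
Your Synge step rests on a misreading of Lemma~\ref{lemmasec}, and this is a genuine gap. For a normal field $W$ the kinetic term in the Hessian is $\int_0^T|(\dot W)_2-\tfrac12\Omega(W)_2|^2\,\mathrm{d}t$, so it vanishes when $(\dot W)_2=\tfrac12\Omega(W)_2$, \emph{not} when $\dot W=\Omega(W)$. With your transport $\dot W=\Omega(W)$ one gets (after the tangential/period correction of Lemma~\ref{tecnica1}, which you also omit --- without it the third, free-period term $\int(\langle\dot W,\speed\rangle/|\speed|-\tfrac{\tau}{T}|\speed|)^2$ is another uncontrolled nonnegative contribution)
\begin{equation*}
\hess(W+g\speed,\tau)\;=\;-\int_0^T|W|^2\,\Msec\Big(\nspeed,\frac{W}{|W|}\Big)\,\mathrm{d}t\;+\;\frac14\int_0^T|\Omega(W)_2|^2\,\mathrm{d}t ,
\end{equation*}
so your claimed identity $\Q_\gamma(W)=-\int\langle \Mo(\nspeed,W),W\rangle\,\mathrm{d}t$ is off by the nonnegative term $\tfrac14\int|\Omega(W)_2|^2$. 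Since the positivity of $\Msec$ in the relevant regimes comes precisely from the $A^{\Omega}$ terms, which are of the same size as $\tfrac14|\Omega(W)_2|^2$, the hypothesis $\Msec>0$ does \emph{not} force $\hess<0$ along your field; the argument only happens to close in dimension $2$, where $\Omega(W)_2=0$, but the first assertion is about arbitrary even dimension. The correct transport is the modified one of \eqref{parallel}--\eqref{omegatilde}, $\dot V=\tom(V)$ with $\tom(V)=\Omega(V_1)+\Omega(V)_1+\tfrac12\Omega(V_2)_2$: it is still $g$-antisymmetric (Lemma~\ref{omegaortogonale}), still has $\speed$ as a solution, so the $\mathrm{SO}(\mathrm{odd})$ eigenvalue-$1$ argument survives, and for the resulting periodic normal solution the kinetic term really does drop, giving $\hess<0$ (Lemma~\ref{Klinglemma}). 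With that replacement your first paragraph matches the paper's proof: minimizer in a nontrivial class for $k>c$ versus index $\geq1$.

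The surface case is also not proved as written. Your route is to establish $k\geq c$ first, via an isoperimetric characterization of $c$ and a pointwise bound extracted from $\Msec>0$, but the key implication (``$\Msec>0\Rightarrow -\int_{D_\gamma}\sigma\leq\sqrt{2k}\,\ell(\gamma)$'') is only announced --- you yourself flag the regime below the Ma\~n\'e value of the universal cover as requiring ``a finer variational argument'' that is not supplied, and the Sturm-comparison remark is circular there. The paper avoids this entirely: for exact $\sigma$ on a closed oriented surface, \cite{CMP} provides a closed magnetic geodesic that is a \emph{minimizer} of $\eta_k$ for every $k$ when $M$ is not simply connected, and for every $k<c$ when $M=S^2$; applying the same index argument of Lemma~\ref{Klinglemma} to that minimizer immediately yields both conclusions at once, $M=S^2$ and $k\geq c$, with no isoperimetric input. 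If you want to keep your two-step structure, you must actually prove the inequality $k\geq c$; as it stands this step is missing.
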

The proof of Theorem C is based on an adaptation of the classical Synge's theorem \cite{kli3}. Recall that this asserts that on even dimensional oriented Riemannian manifold with positive sectional curvature, no closed geodesic is length minimizer. We prove the same result for magnetic systems in Lemma \ref{Klinglemma}, where $\Msec$ replaces the Riemannian sectional curvature and closed magnetic geodesics with energy $k$ replace standard closed geodesics. Thus, the first part of the statement of Theorem C is direct consequence of the fact that if $M$ is oriented and $k>c$, then there exists a local minimizer of $\eta_k$ on each non trivial free homotopic class of $M$ \cite{Abb2}. Similarly, in the case of exact magnetic forms on closed oriented surface, the closed magnetic geodesic with energy $k$ obtained in the previously cited \cite{AsMaz,CMP,taim2} is also minimizer of $\eta_k$ for every $k\in \rp$ if $M$ is non simply connected, and for $k<c$ in the case of $S^2$.

\subsubsection*{Plan of the paper}
We present here an outline of this paper. In Section \ref{pre} we introduce the magnetic action form $\eta_k$, the Hessian of a local primitive and the notion of vanishing sequence and Morse index of a vanishing point. In particular, in Lemma \ref{chart} we build up a local chart centered on a zero of $\eta_k$ which isolates the negative directions of the Hessian; such a construction is useful for the index estimates of Lemma \ref{index1} and Lemma \ref{index2}. In the first part of Section \ref{lowenergy} we discuss the positivity of $\Msec$ and $\Mric$ when $k$ is close to zero while the second part is independently devoted to the proof of Theorem B (\ref{surfaces}). In Section \ref{hessian} we link in Lemma \ref{lemmasec} the Hessian of the magnetic form with the magnetic curvature. Here, we generalize to the magnetic context the Synge's Theorem in Lemma \ref{Klinglemma}, the Bonnet-Myers theorem in Lemma \ref{Bonnet-Myers}, and we prove Theorem C. Section \ref{proofA} is dedicated to the proof of Theorem A (and Theorem A1); we approach it by considering separately the weakly exact case in Section \ref{weaklyexact} and the non weakly exact case in Section \ref{nonweaklyexact}. The scheme of the proof is based for both cases on the following steps: first we present the minimax geometry of $\eta_k$ over the space of loops with short length. Then, in Lemma \ref{index1} and Lemma \ref{index2}, we establish index estimates for zeroes of $\eta_k$ originating from the Struwe monotonicity argument. Through these index estimates and Lemma \ref{Bonnet-Myers}, we recover a converging vanishing sequence of $\eta_k$ which concludes the proof.

\subsubsection*{Acknowledgements} The author warmly thanks his mentors Peter Albers and Gabriele Benedetti for their support during the draft of this article. He is also grateful to Alberto Abbondandolo, Luca Asselle, Johanna Bimmermann, Samanyu Sanjay and Iskander Taimanov for valuable discussions, and to James Marshal Reber and Ivo Terek for their precious corrections and suggestions on this work. A special thanks goes to Marco Mazzucchelli who suggested how to adapt the index estimate to the magnetic form in Lemma \ref{index1}. This work had been completed at the University of Toronto and the author is deeply grateful to Jacopo De Simoi for his hospitality. Finally, the author thanks the anonymous referees for their precious suggestions and for their patience in catching the inaccuracies of the first draft.
\\

The author is partially supported by the Deutsche Forschungsgemeinschaft under Germany’s Excellence Strategy EXC2181/1 - 390900948 (the Heidelberg STRUCTURES Excellence Cluster), the Collaborative Research Center SFB/TRR 191 - 281071066 (Symplectic Structures in Geometry, Algebra and Dynamics), the Research Training Group RTG 2229 - 281869850 (Asymptotic Invariants and Limits of Groups and Spaces) and by the NSERC Discovery Grant RGPIN-2022-04188.

\section{Preliminaries} \label{pre}
\subsection{Hilbert space of loops with free period and the magnetic action form}
Let $\Lambda=H^1(S^1,M)$ be the set of absolutely continuous loops on $M$ parametrized over the unit circle with $L^2$-integrable tangent vector. The space $\Lambda$ admits a structure of \textit{Hilbert  manifold} modeled as $H^1(S^1,\R^n)$. For more details and properties we refer the reader to the beautiful book of Klingenberg \cite{Kli1}. For simplicity, we assume that $M$ is orientable which, up to taking a double cover of $M$, is no loss of generality for the following constructions. The tangent space at $x\in \Lambda$ is naturally identified with the space of absolutely continuous vector field along $x$ with $L^2$-integrable covariant derivative and it is naturally isomorphic to $H^1(S^1,\R^n)$. We endow $\Lambda$ with a metric $g_{\Lambda}$ which in each tangent space, by choosing a trivialization $\mathrm{\Psi}:S^1 \times \R^n \to x^*TM$ of $TM$ along $x$, is defined as
\begin{equation}
    g_{\Lambda}( \zeta_1,\zeta_2 ) = \int_0^1 \Big[ \langle \zeta_1,\zeta_2 \rangle_{x} + \Big\langle \dt \zeta_1, \dt \zeta_2 \Big\rangle_{x}  \Big]   \mathrm{d}s   .  
\end{equation}
The distance $d_{\Lambda}$ induced by $g_{\Lambda}$ gives to $\Lambda$ a structure of \textit{complete Riemannian manifold}. The space $C^{\infty}(S^1,M)$ is dense in $\Lambda$, and a local chart centered at a smooth loop $x$ is given for instance by
\begin{equation*}
    F_{\Lambda}:H^1(S^1,B_{r}(0)) \to \Lambda  , \ F_{\Lambda}(\zeta)(t)= \mathrm{exp}_{x(t)}(\mathrm{\Psi}(t,\zeta(t))) ;
\end{equation*}
where $B_r(0) \subset \R^n$ is an open ball and exp is the exponential map of $(M,g)$. As pointed out in \cite[Remark~2.2]{Abb2}, $F_{\Lambda}$ is bi-Lipschitz i.e. $F_{\Lambda}$ and $F_{\Lambda}^{-1}$ are Lipschitz with respect to the standard distance $d_0$ of $H^1(S^1,\R^n)$ and $d_{\Lambda}$. This is a consequence of the fact that the norm $| \cdot |_0$ of $H^1(S^1,\R^n)$ induced by the Euclidean scalar product of $\R^n$ and the norm $| \cdot |_{\Lambda}$ induced by $(F_{\Lambda})^{*} g_{\Lambda}$ are equivalent on $H^1(S^1,B_r(0))$. \\

Consider now $\M= \Lambda \times \rp$. This set can be interpreted as the set of absolutely continuous loops on $M$ with $L^2$-integrable tangent vector and free period. Indeed, a point $(x,T) \in \M$ is identified with $\gamma :[0,T] \to M$ through $\gamma(t)=x(\frac{t}{T})$. Vice versa, an absolutely continuous loop $\gamma$ with $L^2$-integrable tangent vector corresponds to the pair $(x,T)$ with $x(s)=\gamma(sT)$. We provide $\M$ with the Riemannian structure obtained from the product of $\Lambda$ endowed with $g_{\Lambda}$ and the Euclidean structure of $\rp$. In particular, the tangent space splits into 
\begin{equation}
    T\M = T\Lambda \oplus \R \frac{\partial}{\partial T}  ,
\end{equation}
and in this splitting the metric is given by 
\begin{equation*}
    g_{\M} = g_{\Lambda} + \mathrm{d}T^2  .
\end{equation*}
If $x\in C^{\infty}(S^1,M)$, a local chart centered at $(x,T)$ is obtained through the product
\begin{equation}\label{chartM}
    F_{\M}=F_{\Lambda} \times \mathrm{Id}_{\rp}: H^1(B_r(0),M)\times \rp \to \M  ,  \ F_{\M}(\zeta, \tau)=(F_{\Lambda}(\zeta),\tau)  .
\end{equation}
With abuse of notation, write $| \cdot |_{0}$ the standard product norm of $H^1(S^1,\R^n)\times \rp$ and observe that $| \cdot |_{0}$ and the norm induced by $(F_{\M})^*g_{\M}$ are equivalent on $H^1(B_r(0),M)\times \rp$. In analogy with $F_{\Lambda}$, this implies that the local chart $F_{\M}$ defined in \eqref{chartM} is bi-Lipschitz since it is the product of two bi-Lipschitz maps. 
The distance induced by $g_{\M}$ is not complete because $\mathrm{d}T^2$ is not complete in the Euclidean factor. Nevertheless, completeness is obtained by restricting $g_{\Lambda}$ on sets of the form $\Lambda \times [T_*, +\infty)$ for every arbitrary $T_*>0$. Finally, since $\M$ is homotopically equivalent to $\Lambda$, its connected components are in correspondence with the elements of $[S^1,M]$, namely the set of conjugacy classes of $\pi_1(M)$. In particular, with $\M_0$ we indicate the connected component of contractible loops with free period. \\

Hereafter, we often identify $\gamma$ with the respective $(x,T)$. For simplicity, we also use the``dot" to indicate the Levi-Civita covariant derivative ``$\frac{D}{\mathrm{d}{t}}$". A vector field over $x$ and its respective parametrization over $\gamma$ is denoted with the same symbol. Observe that the rescaling of the tangent vectors and their time derivatives with respect to the two different parametrizations is given by the identity $\speed(t) = \frac{1}{T}  \dot{x}(\frac{t}{T}) $, and by $\dot{V}(t) = \frac{1}{T} \dot{V}(\frac{t}{T})$ if $V$ is a vector field along $x$.

\subsection{Magnetic action form}
For $k\in \rp$ consider the $k$-kinetic action $A_k: \M \to R$ defined as: 
\begin{flalign*}
    A_k(x,T)&= T\int_0^1 \Big\{ \frac{|\dot{x}|^2}{2T^2} +k \Big\} \mathrm{d}s \\
    &= \int_0^T \Big\{ \frac{|\speed|^2}{2}+k \Big\}  \mathrm{d}t  .
\end{flalign*}
Let $\Theta \in \Omega^1(\Lambda)$ be such that 
\begin{equation*}
    \Theta_x(V)=\int_0^1 \langle \Omega(V),\dot{x} \rangle  \mathrm{d}s.
\end{equation*}
Denote by $\pi_{\Lambda}: \M \to \Lambda$ the projection into the first factor of $\M$. The \textit{$k$-magnetic action form} $\eta_k\in \Omega^1(\M)$ is defined by:
\begin{equation*}
    \eta_k(\gamma) = d_{\gamma}A_k + \pi_\Lambda ^* \Theta_x  .
\end{equation*}
If $x$ is of class $C^2$ and $(V,\tau)\in T_x\Lambda \oplus \R \frac{\partial}{\partial T}$, then $\eta_k$ acts as follows: 
\begin{equation*}
    \eta_k(\gamma)(V,0)=-\int_0^T \langle \nabla_{\speed}\speed - \Omega(\speed),V \rangle  \mathrm{d}t   , \  \eta_k(\gamma)(0,\tau)= \frac{\tau}{T}\int_0^T \Big\{ k-\frac{|\speed|^2}{2} \Big\} \mathrm{d}t  . 
\end{equation*}
In agreement with \cite[Lemma~2.2]{AB} and \cite[Lemma~3.1]{Abb2},  $\eta_k$ is a smooth a section of $T\M$ and $\eta_k(\gamma)=0$ if and only if $\gamma$ is a closed magnetic geodesic with energy $k$. We write $\Z(\eta_k)$ the zero set of $\eta_k$. An important property of the magnetic action form concerns its behaviour when integrated over a loop of $\M$. Indeed if $u:[0,1] \to \M$ is a closed path, the integral $\int_0^1 u^*\eta_k$ depends only on the homotopy class of $u$. In this sense $\eta_k$ is said to be \textit{closed} \cite[Corollary~2.4]{AB}. \\ 

 Let $\U \subseteq \M $ be an open set diffeomorphic to a ball and $(x_0,T_0) \in \U$. A local primitive $\prim : \U \to \R$ of $\eta_k$, centered in $(x_0,T_0)$, is defined by
\begin{equation}\label{primitive}
 \prim(x,T) = A_k(x,T)+ \int_{S^1 \times [0,1]} c_{x_0,x}^* \ \sigma  .
\end{equation}
Here, $c_{x_0,x}: S^1 \times [0,1] \to M$ is a cylinder which connects $x_0$ and $x$ such that $c_{x_0,x}( \cdot , s) \in \pi_{\Lambda}(\U)$ for every $s$. Indeed, the closedness of $\eta_k$ makes the above definition independent from the choice of $c_{x_0,x}$. Because $d\prim = \eta_k$ on $\U$, it is obvious that $\gamma \in \Z(\eta_k)$ if and only if $\gamma$ is a critical point for every local primitive of $\eta_k$.

\subsection{Hessian of $\eta_k$ and the index of a vanishing point}
Let $\gamma$ be a zero of $\eta_k$ and $\prim$ be a local primitive of $\eta_k$ centered at $\gamma$. If $\zeta \in T_{\gamma} \M$, we naturally identify the tangent space $T_{\zeta}(T_{\gamma} \M)$ with $T_{\gamma}\M$ itself. Under this identification we can look at the second (Fr\'echet) derivative $ \mathrm{d}^2_{\gamma} \prim $ of $\prim$ at the point $\gamma$, as a bilinear form on $T_{\gamma}\M$. Because the connection on $M$ is the Levi-Civita connection, such a bilinear form is symmetric. The Hessian of $\prim$ at $\gamma$, denoted by $\mathrm{Hess}_{\gamma}(\prim)$, is the quadratic form associated with $ \mathrm{d}^2_{\gamma} \prim $. We refer the reader to \cite{chang} for a general overview. The fact that two local primitives of $\eta_k$ differ by a constant makes natural the following definition.
\begin{defn}\label{hessian} 
Let $\gamma \in \Z(\eta_k)$. The Hessian of $\eta_k$ at $\gamma$, denoted by $\hess$, is the quadratic form defined as
\begin{equation*}
    \hess=\mathrm{Hess}_{\gamma}(\prim)  ,
\end{equation*}
where $\prim$ is any primitive of $\eta_k$ on a neighbourhood of $\gamma$.
\end{defn}
\begin{lemma}\label{secondvariation}
    Let $\gamma \in \Z(\eta_k)$ and $\zeta=(V,\tau)\in T_{\gamma}\M$. Then
\begin{equation*}
  \begin{aligned}
    \hess(V,\tau) =& \int_0^T \left\{ \langle \dot{V}-\Omega(V),\dot{V}\rangle-\langle R(V,\speed)\speed-\nabla_V \Omega (\speed),  V\rangle \right\}   \mathrm{d}t -\int_0^T \frac{\langle \dot{V},\speed \rangle^2}{|\speed|^2}  \mathrm{d}t  \\
 & \ \ \ \ \ \ \ \ \ \ \ \ \ \ \ \ \ \ \ \ \ \ \ \ \ \ \ \ +\int_0^T \left( \frac{\langle \dot{V},\speed \rangle}{|\speed|}-\frac{\tau}{T}|\speed| \right)^2   \mathrm{d}t  .
\end{aligned}
\end{equation*}
\begin{proof}
    The computation follows by adopting \cite[Section~2]{Gou} to a primitive of $\eta_k$ as given by \eqref{primitive}. 
    
\end{proof}
\end{lemma}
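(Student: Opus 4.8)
The plan is to compute $\hess(V,\tau)$ directly from the explicit first variation of $\eta_k$ recorded above. By Definition \ref{hessian} the Hessian is independent of the chosen local primitive, so we may work with the primitive $\prim=A_k+\int_{S^1\times[0,1]}c^{*}\sigma$ of \eqref{primitive}. Since $\gamma$ is a zero of $\eta_k$, hence a critical point of $\prim$, the standard argument (valid because the connection on $\M$ is Levi--Civita) gives
\[
\hess(V,\tau)=\frac{d^2}{du^2}\Big|_{u=0}\prim(\gamma_u)=\frac{d}{du}\Big|_{u=0}\,\eta_k(\gamma_u)(\dot\gamma_u)
\]
for \emph{any} smooth curve $u\mapsto\gamma_u\in\M$ with $\gamma_0=\gamma$ and $\dot\gamma_0=(V,\tau)$, the correction term (which involves $d\prim=\eta_k$) vanishing at $\gamma$. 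Writing $\gamma_u=(x_u,T_u)\in\Lambda\times\rp$, I would use the convenient variation with $T_u=T+u\tau$ and $x_u$ any variation of the base loop with $\partial_u x_u|_{u=0}$ equal to the field corresponding to $V$, so that the period enters linearly and only the $\Lambda$-component carries a genuine acceleration.

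Next I would pull the two pieces $\eta_k(\gamma_u)(V_u,0)=-\int_0^{T_u}\langle\nabla_{\speed_u}\speed_u-\Omega(\speed_u),V_u\rangle\,dt$ and $\eta_k(\gamma_u)(0,\tau)=\tfrac{\tau}{T_u}\int_0^{T_u}\{k-\tfrac12|\speed_u|^2\}\,dt$ back to the fixed interval $[0,1]$, using the rescalings $\speed(t)=\tfrac1T\dot x(t/T)$ and $\dot V(t)=\tfrac1T\dot V(t/T)$ from the preliminaries so that the domain of integration no longer depends on $u$, and then differentiate in $u$. This uses torsion-freeness of $\nabla$ in the form $\frac{D}{\partial u}\partial_s x_u=\frac{D}{\partial s}\partial_u x_u$ and the curvature commutation $\big(\frac{D}{\partial u}\frac{D}{\partial s}-\frac{D}{\partial s}\frac{D}{\partial u}\big)Y=R(\partial_u x_u,\partial_s x_u)Y$, which is what produces the Jacobi term $R(V,\speed)\speed$; differentiating $\Omega(\speed_u)$ produces $(\nabla_V\Omega)(\speed)$ together with $\Omega(\dot V)$, and differentiating the kinetic part produces the terms involving $\tau$ and $\langle\dot V,\speed\rangle$. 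At $u=0$ I would then invoke that $\gamma$ solves the magnetic geodesic equation $\nabla_{\speed}\speed=\Omega(\speed)$: this makes the contributions proportional to the acceleration of the variation disappear (equivalently, they multiply the vanishing value $\eta_k(\gamma)$) and allows $\nabla_\speed\speed$ to be replaced by $\Omega(\speed)$ everywhere else.

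Finally I would integrate by parts using that $V$ and $\speed$ are $T$-periodic: $-\int_0^T\langle\ddot V,V\rangle\,dt=\int_0^T|\dot V|^2\,dt$ and, by antisymmetry of $\Omega$, $\int_0^T\langle\Omega(\dot V),V\rangle\,dt=-\int_0^T\langle\Omega(V),\dot V\rangle\,dt$; and I would use $\int_0^T\langle\Omega(\speed),V\rangle\,dt=-\int_0^T\langle\dot V,\speed\rangle\,dt$, which follows from $\tfrac{d}{dt}\langle V,\speed\rangle=\langle\dot V,\speed\rangle+\langle V,\Omega(\speed)\rangle$ being a total derivative, to combine the remaining $\tau$-linear term with $\tfrac{\tau^2}{T^2}\int_0^T|\speed|^2\,dt$ into $\int_0^T\big(\tfrac{\langle\dot V,\speed\rangle}{|\speed|}-\tfrac{\tau}{T}|\speed|\big)^2\,dt-\int_0^T\tfrac{\langle\dot V,\speed\rangle^2}{|\speed|^2}\,dt$, which yields exactly the asserted formula. (If one wishes, $\langle(\nabla_V\Omega)(\speed),V\rangle$ can be rewritten using the closedness identity $\mathrm d\sigma=0$, but this is not needed for the statement.) The genuinely delicate point is not any single step but the bookkeeping of the reparametrisation factors and of the sign and coefficient of every free-period contribution in $\tau$; this is precisely the computation of \cite[Section~2]{Gou}, applied here to the primitive \eqref{primitive}. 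An equivalent route is to split $\prim=A_k+\int c^{*}\sigma$ and compute the Hessian of the free-period kinetic action $A_k$ and of the magnetic area term separately, the latter producing the $\Omega$-terms via $\mathrm d\sigma=0$; both routes give the same answer.
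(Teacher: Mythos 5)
Your proposal is correct and is essentially the paper's own argument: the paper simply defers to the second-variation computation of \cite[Section~2]{Gou} applied to the primitive \eqref{primitive}, which is exactly the calculation you outline (differentiating the first variation at a zero of $\eta_k$, using the curvature commutation, the magnetic geodesic equation, periodic integration by parts, and the free-period bookkeeping that produces the cross terms $-\tfrac{2\tau}{T}\int_0^T\langle\dot V,\speed\rangle\,\mathrm{d}t+\tfrac{\tau^2}{T^2}\int_0^T|\speed|^2\,\mathrm{d}t$, which regroup into the two quadratic terms of the statement). No substantive difference from the paper's route.
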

If $\gamma$ is a vanishing point, the signature of $\hess$ is well-defined because it is independent of the local coordinate chart. Therefore, there exist two vector subspaces $H_{\gamma},E_{\gamma} \subset T_{\gamma} \M$ such that the tangent space splits into
\begin{equation*}
T_\gamma \M = H_{\gamma} \oplus E_{\gamma}  ,
\end{equation*}
and $E_{\gamma}$ is the maximal subspace, in terms of dimension, for which the restriction $\hess|_{_{E_{\gamma}}}$ is negative definite.
\begin{defn} Let $\gamma \in \Z(\eta_k)$. The \textit{Morse index} of $\gamma$, denoted by $\mathrm{index}(\gamma)$, is the non negative integer given by
\begin{equation*}
\mathrm{index}(\gamma)=\mathrm{dim}E_{\gamma}  . 
\end{equation*}
\end{defn}
As argued in \cite[Proposition~3.1]{Abb1}, the self adjoint operator associated with $ \mathrm{d}^2_{\gamma} \prim $ is a compact perturbation of a positive Fredholm operator, which implies that the index of a vanishing point of $\eta_k$ is always finite. \\ 

We conclude the section by constructing a system of local coordinates around a zeroes of $\eta_k$ which isolates the direction of $T_{\gamma}\M$ where $\hess$ is negative definite. Such construction is used in the proof of Lemma \ref{index1} and Lemma \ref{index2} in Section \ref{proofA}. \\ 

Let $\gamma$ such that $\mathrm{index}(\gamma)=d$ and consider $(\U_{\gamma},F_{\M})$ local coordinates centered at $\gamma$ as given in Equation~\eqref{chartM}. By the assumption on the index and by the equivalence of the norms $| \cdot |_0$ and $| \cdot |_{\M}$ there exist two vector spaces $H_{\gamma},E_{\gamma} \subset H^1(S^1,\R^n)\times \rp$ and a positive constant $D_{\gamma}$ such that 
\begin{equation*}
H^1(S^1,\R^n)\times \rp= H_{\gamma} \oplus E_{\gamma}  , 
\end{equation*}
 the space $E_{\gamma}$ is of dimension $d$ and
\begin{equation}\label{inequality}
    \lochess(\zeta,\tau)\leq -4D_{\gamma}|(\zeta,\tau)|^2_{0}  ,  \ \ \forall (\zeta,\tau) \in E_{\gamma}  .
\end{equation}
Hereafter, we denote by $B_r^H$ and $B^E_r$ respectively the open ball of radius $r$ centered at the origin of $H_{\gamma}$ and $E_{\gamma}$.
\begin{lemma}\label{chart}
    There exists a local chart $(\mathrm{\Phi}_{\gamma}^{-1}(\V_{\gamma}),\mathrm{\Phi}_{\gamma})$ centered at $\gamma$ such that $\V_{\gamma}=B_r^H \times B_r^E$. Moreover, this system of local coordinates enjoys the following properties:
    \begin{itemize}
    \item[(i)]  $\mathrm{\Phi}_{\gamma}$ is bi-Lipschitz (with respect to $d_0$ and $d_{\M}$) and $\mathrm{\Phi}_{\gamma}(\Z(\eta_k))\subseteq B_r^H \times \{ 0 \}$;
    \item[(ii)] if $\prim$ is an arbitrarily primitive of $\eta_k$ on $\mathrm{\Phi}_{\gamma}^{-1}(\V_{\gamma})$, then for every $(y_h,y_e)\in B_r^H \times B_r^E$ and for every $0<\lambda< r - |y_e|_{_0}$ it holds
        \begin{equation*}
       \locprim \Big(y_h,y_e+\lambda \frac{y_e}{ \ |y_e|_{_{0}}}\Big) \leq \locprim \Big(y_h,y_e \Big)-D_{\gamma}\lambda^2  . 
      \end{equation*}
      \end{itemize}

\begin{proof}
Let $(\U_{\gamma},F_{\M})$, $H_{\gamma}$ and $E_{\gamma}$ as above and $\prim$ a local primitive defined on $\U_{\gamma}$. By \refeq{inequality} and by the smoothness of $\prim$ there exists $\V_{\gamma}\subset U_{\gamma}$ open neighborhood of $\gamma$ such that

\begin{equation}\label{inequality2}
    \Q_{\theta}(F_{\M}^* \eta_k)(\zeta,\tau)\leq - 2D_{\gamma}|(\zeta,\tau)|_{0}^2 ,  \ \forall \theta \in F_{\M}^{-1}(V_{\gamma}) \ , \ \forall (\zeta,\tau) \in E_{\gamma}  .
\end{equation}

Without loss of generality, for a positive real $R$, we can assume $\V_{\gamma}$ being diffeomorphic through $F_{\M}$ to $B_R^H \times B^E_R$ with coordinates $(y_h,y_e)$ centered in $\gamma$. 
Consider now the function $G_{\gamma}:B_R^H \times B^E_R \to E_{\gamma}$ defined as 
\begin{equation*}
    G_{\gamma}(y_h,y_e)=\partial_{y_e}\locprim(y_h,y_e)  .
\end{equation*}
As consequence of the Implicit Function Theorem, there exists a function $g_{\gamma}:B_R^H \to B_R^E$ such that, over $B_R^H \times B^E_R$, the equality $G_{\gamma}(y_h,y_e)=0$ holds if and only if $y_e=g_{\gamma}(y_h)$. In particular, all the critical points of $\prim$ in this local coordinate chart belong to the graph of $g_{\gamma}$ which we denote by $\mathcal{G}(g_{\gamma})$. Consider the map $T(y_h,y_e)=(y_h,y_e-g_{\gamma}(y_h))$ which pointwise translates $\mathcal{G}(g_{\gamma})$ to $B_R^H \times \{ 0 \}$ into the direction of $y_e$. Let $r\in(0,R)$ such that $B_r^H \times B_r^E$ is the image through $T$ of an open neighbourhood of $\mathcal{G}(g_{\gamma})$ and set $\mathrm{\Phi}_{\gamma}=F_{\M} \circ T$ defined on $\V_{\gamma}=\mathrm{\Phi}_{\gamma}^{-1}(B_r^H \times B_r^E)$. Observe that $\mathrm{\Phi}_{\gamma}$ is bi-Lipschitz because is a composition of bi-Lipschitz maps. Moreover, if $\theta \in \Z(\eta_k)$, then $\mathrm{\Phi}_{\gamma}(\theta)\in B_R^H \times \{ 0 \} $ since $F_{M}$ sends zeroes of $\eta_k$ to $\mathcal{G}(g_{\gamma})$ and $T$ sends the graph of $g_{\gamma}$ to $B_r^H \times \{ 0 \}$.

Finally, let  $\lambda<r-|y_e|_{0}$ and for simplicity write $q(t)=\Big(y_h,y_e+t\lambda \frac{y_e}{|y_e|}_0 \Big)$. Up to shrinking $r$ we have that

\begin{flalign*}
    \locprim(q(1))-\locprim(q(0))&=\int_0^1\frac{\mathrm{d}}{\mathrm{d}t}\locprim(q(t)) \mathrm{d}t \\
    &=\int_0^1\mathrm{d}_{q(t)}\locprim(\dot{q}(t)) \mathrm{d}t \\
   \  \ \ \ \ \ \ \ \ \ &=\int_0^1\int_0^t \mathrm{d}\locprim_{_{(y_h,y_e)}}(\dot{q}(0)) \mathrm{d}s \mathrm{d}t \\
    & \ \ \ \ \ \ \ \ \ \ \ \ \ \ \ \ \ + \int_0^1\int_0^t \left[\frac{\mathrm{d}}{\mathrm{d}s}d\locprim_{q(t)}(\dot{q}(0)) \right] \mathrm{d}s \mathrm{d}t \\
     &\leq  \int_0^1\int_0^t  \mathrm{Hess}_{q(t)}(S^k_{\sigma}\circ \mathrm{\Phi_{\gamma}^{-1}})(\dot{q}(0)) \mathrm{d}s \mathrm{d}t \\
    &\leq -D_{\gamma} \lambda^2 ,
\end{flalign*}

where in the above inequalities we first use the fact that the first derivative of $\locprim$ with respect to $y_e$ is non positive for points close to  $B_r^H \times \{ 0 \}$ and then the inequality pointed out in~\eqref{inequality2} 

\end{proof}
\end{lemma}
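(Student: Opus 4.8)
The plan is to first propagate the negativity of the Hessian on $E_\gamma$ from $\gamma$ to a whole neighbourhood, and then straighten the zero set of $\eta_k$ into a coordinate slice. Fixing a local primitive $\prim$ of $\eta_k$ on $\U_\gamma$, smoothness of $\prim$ makes $\theta\mapsto\Q_\theta(F_\M^*\eta_k)$ continuous, so the strict inequality \eqref{inequality} at $\theta=0$ persists on an open neighbourhood $\V_\gamma\subset\U_\gamma$ of $\gamma$ — which, after shrinking, we take $F_\M$-diffeomorphic to a product $B_R^H\times B_R^E$ with coordinates $(y_h,y_e)$ — so that $\Q_\theta(F_\M^*\eta_k)(\zeta,\tau)\le -2D_\gamma|(\zeta,\tau)|_0^2$ for all $\theta\in F_\M^{-1}(\V_\gamma)$ and all $(\zeta,\tau)\in E_\gamma$. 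In particular, for each fixed $y_h$ the function $y_e\mapsto(\prim\circ F_\M)(y_h,y_e)$ is uniformly concave along the finite-dimensional subspace $E_\gamma$, so $G_\gamma(y_h,y_e):=\partial_{y_e}(\prim\circ F_\M)(y_h,y_e)$ has negative-definite — hence, $E_\gamma$ being finite-dimensional, invertible — derivative in $y_e$, and the Implicit Function Theorem provides a $C^1$ map $g_\gamma\colon B_R^H\to B_R^E$ with $g_\gamma(0)=0$ such that $G_\gamma(y_h,y_e)=0$ if and only if $y_e=g_\gamma(y_h)$. Since zeros of $\eta_k$ are critical points of $\prim$, they all lie on the graph $\G(g_\gamma)$.

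Next I would introduce the shear $T(y_h,y_e)=(y_h,\,y_e-g_\gamma(y_h))$, which carries $\G(g_\gamma)$ onto $B_R^H\times\{0\}$, choose $0<r<R$ so that (in the new coordinates) $B_r^H\times B_r^E$ still corresponds to a region inside $\V_\gamma$, and set $\mathrm{\Phi}_\gamma=F_\M\circ T$ on $\mathrm{\Phi}_\gamma^{-1}(B_r^H\times B_r^E)$. Then $\mathrm{\Phi}_\gamma$ is bi-Lipschitz, being the composition of the bi-Lipschitz chart $F_\M$ (see \cite[Remark~2.2]{Abb2}) with the diffeomorphism $T$, which is bi-Lipschitz on the relatively compact domain at hand; and $\mathrm{\Phi}_\gamma(\Z(\eta_k))\subseteq B_r^H\times\{0\}$ because $T$ flattens $\G(g_\gamma)$, which contains every zero of $\eta_k$. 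This establishes (i).

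For (ii), fix $(y_h,y_e)\in B_r^H\times B_r^E$ and $0<\lambda<r-|y_e|_0$, and expand $\locprim$ to second order along the segment $q(t)=\bigl(y_h,\,y_e+t\lambda\,y_e/|y_e|_0\bigr)$, $t\in[0,1]$, which stays in $B_r^H\times B_r^E$ by the bound on $\lambda$. Using $\ddot q\equiv0$, Taylor's formula with integral remainder gives $\locprim(q(1))-\locprim(q(0))=\mathrm{d}_{q(0)}\locprim(\dot q(0))+\int_0^1(1-s)\,\mathrm{Hess}_{q(s)}(\locprim)(\dot q(0))\,\mathrm{d}s$. The first term is non-positive: $\dot q(0)$ points along $E_\gamma$, while in the straightened chart $\partial_{y_e}\locprim$ vanishes on $B_r^H\times\{0\}$ and has negative-definite derivative in $y_e$, so $\langle\partial_{y_e}\locprim(y_h,y_e),y_e\rangle\le0$. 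For the remainder, since $T$ acts by a translation in $y_e$ for each fixed $y_h$, the $E_\gamma$-block of $\mathrm{Hess}(\locprim)$ at $q(s)$ coincides with that of $\Q_\theta(F_\M^*\eta_k)$ at the corresponding $\theta\in F_\M^{-1}(\V_\gamma)$; hence, with $|\dot q(0)|_0=\lambda$ and $\dot q(0)\in E_\gamma$, the stabilized estimate yields $\mathrm{Hess}_{q(s)}(\locprim)(\dot q(0))\le-2D_\gamma\lambda^2$, and integrating, $\int_0^1(1-s)(-2D_\gamma\lambda^2)\,\mathrm{d}s=-D_\gamma\lambda^2$, the asserted bound. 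The step I expect to require the most care is this last piece of bookkeeping: checking that the shear $T$ mixes the $E_\gamma$-direction only with $y_h$, so the a priori bound on the $E_\gamma$-block of the Hessian transfers verbatim between the two charts, and arranging the nested radii $R$ and $r$ in the right order so that every segment $q(t)$ used above stays in the region where that bound holds.
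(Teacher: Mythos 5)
Your proposal follows essentially the same route as the paper: propagate the negative-definiteness of the Hessian on $E_{\gamma}$ to a neighbourhood, straighten the critical set via the Implicit Function Theorem and the shear $T(y_h,y_e)=(y_h,y_e-g_{\gamma}(y_h))$, conclude bi-Lipschitzness by composition, and obtain (ii) from a second-order expansion along $q(t)$ with the stabilized Hessian bound $-2D_{\gamma}$ integrating to $-D_{\gamma}\lambda^2$. The only differences are cosmetic — Taylor's formula with integral remainder in place of the paper's iterated integrals, and a concavity argument (rather than "up to shrinking $r$") for the non-positivity of the first-order term, together with the explicit check that the shear leaves the $E_{\gamma}$-block of the Hessian unchanged, which the paper leaves implicit.
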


\subsection{Vanishing sequences} 
Our goal is to find zeroes of $\eta_k$ via minimax methods. The next definition generalizes to the magnetic action form the notion of Palais-Smale sequence.
\begin{defn}
    A $vanishing \ sequence$ for $\eta_k$ is a sequence $(x_n,T_n)=(\gamma_n)$ of $\M$ such that
    \begin{equation*}
        |\eta_k(\gamma_{n})|_{_{\M}} \to 0  .
    \end{equation*}
\end{defn}
Because $\eta_k$ is continuous, $\Z(\eta_k)$ coincides with the limit point set of vanishing sequence. Thus understanding whether a vanishing sequence admits or not a converging subsequence becomes of crucial importance. Generally if the sequence of periods $T_n$ diverges to infinity or tends to zero there is no hope to find a limit point for $\gamma_n$. Nevertheless, in \cite[Theorem~2.6]{AB}, it is established that this is exactly the case to avoid in order to have compactness on a vanishing sequence. 
\section{Magnetic curvature on low energy levels}\label{lowenergy}
\subsection{Symplectic and nowhere vanishing magnetic forms.}\label{lowenergysub}
A question which emerges naturally is whereas a magnetic system show up a k-level of the energy positively or negatively curved in terms of $\Msec$ or $\Mric$. We are interested into magnetic system positively curved and in a range of energy close to zero. This requirement is satisfied by two standard classes of magnetic systems: symplectic magnetic systems with respect to the magnetic sectional curvature and nowhere vanishing magnetic systems with respect to the magnetic Ricci curvature. We need the following relevant lemma.
\begin{lemma}\label{aomega} Let $A^{\Omega}$ the operator defined in~\eqref{operatorA}. Then the following statements hold
\begin{itemize}
\item[(i)] $ \langle  A^{\Omega}(v,w) , w  \rangle \geq 0$ for every $(v,w)\in E^1$ and it is equal to zero if and only if $\Omega(w)=0$,
\item[(ii)] $\mathrm{trace}\Big(A^{\Omega}(v,\cdot)\Big) \geq 0$ for every $v\in SM$ and it is equal to zero if and only if $\Omega_p=0$.
\end{itemize}
\begin{proof} Observe that
\begin{equation}\label{A2}
\langle A^{\Omega}(v ,w), w \rangle = \frac{3}{4} \langle w, \Omega(v) \rangle^2 + \frac{1}{4}|\Omega(w)|^2  ,
\end{equation}
which immediately implies point (i). Moreover, if we extend $v$ to an orthonormal basis $\{ v, e_2, ..., e_n \} $ then by \eqref{A2} we get
\begin{align}
\trace\Big(A^{\Omega}(v, \cdot)\Big)&= \sum_{i=2}^n \langle A^{\Omega}(v ,e_i), e_i \rangle \nonumber \\
& =\sum_{i=2}^n \Big\{  \frac{3}{4} \langle \langle e_i ,\Omega(v) \rangle \Omega(v), e_i \rangle + \frac{1}{4} \langle \Omega(e_i),\Omega(e_i) \rangle \Big\} \nonumber &&\\
&=\sum_{i=2}^n \frac{3}{4} \langle e_i ,\Omega(v) \rangle^2 + \frac{1}{4}\Big(\sum_{i,j=2}^n \langle \Omega(e_i),e_j \rangle^2 + \sum_{i=2}^n \langle \Omega(e_i),v \rangle^2 \Big) \nonumber &&\\
&=\sum_{i=2}^n  \langle e_i ,\Omega(v) \rangle^2 + \frac{1}{4} \sum_{i,j=2}^n \langle \Omega(e_i),e_j \rangle^2  . \label{terms}
\end{align}
which is always non negative and equal to zero if and only if $\Omega_p = 0$. Thus, also point (ii) holds.

\end{proof}
\end{lemma}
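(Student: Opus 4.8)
The plan is to reduce both statements to elementary linear algebra inside a single fiber $E_{(p,v)}$, exploiting only the antisymmetry of $\Omega$ with respect to $g$ and the relation $\langle\Omega(v),v\rangle=0$. The common starting point for both parts is the explicit expansion of the quadratic form $w\mapsto\langle A^{\Omega}(v,w),w\rangle$ obtained by pairing the three summands of \eqref{operatorA} with $w$.

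For part (i), I would pair $A^{\Omega}(v,w)$ with $w$ and handle the three resulting terms: the first gives $\tfrac34\langle w,\Omega(v)\rangle^2$; the second is $-\tfrac14\langle\Omega^2(w),w\rangle=-\tfrac14\langle\Omega(\Omega(w)),w\rangle=\tfrac14\langle\Omega(w),\Omega(w)\rangle=\tfrac14|\Omega(w)|^2$ using antisymmetry; and the third, $-\tfrac14\langle\Omega(w),\Omega(v)\rangle\langle v,w\rangle$, vanishes because $(v,w)\in E^1$ forces $\langle v,w\rangle=0$. Hence $\langle A^{\Omega}(v,w),w\rangle=\tfrac34\langle w,\Omega(v)\rangle^2+\tfrac14|\Omega(w)|^2\geq0$. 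Both summands are non-negative, so equality forces $\Omega(w)=0$; conversely, if $\Omega(w)=0$ then also $\langle w,\Omega(v)\rangle=-\langle\Omega(w),v\rangle=0$, so the form vanishes exactly when $\Omega(w)=0$.

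For part (ii), I would fix $v\in S_pM$, extend it to a $g$-orthonormal basis $\{v,e_2,\dots,e_n\}$ of $T_pM$, and note $E_{(p,v)}=\mathrm{span}\{e_2,\dots,e_n\}$, so that $\trace(A^{\Omega}(v,\cdot))=\sum_{i=2}^n\langle A^{\Omega}(v,e_i),e_i\rangle=\sum_{i=2}^n\bigl(\tfrac34\langle e_i,\Omega(v)\rangle^2+\tfrac14|\Omega(e_i)|^2\bigr)$ by the formula just derived. Since $\Omega(v)\perp v$ we have $\Omega(v)\in E_{(p,v)}$, hence $\sum_{i=2}^n\langle e_i,\Omega(v)\rangle^2=|\Omega(v)|^2$; and expanding each $|\Omega(e_i)|^2$ in the full basis, using $\langle\Omega(e_i),v\rangle=-\langle e_i,\Omega(v)\rangle$, gives $\sum_{i=2}^n|\Omega(e_i)|^2=|\Omega(v)|^2+\sum_{i,j=2}^n\langle\Omega(e_i),e_j\rangle^2$. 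Combining these yields $\trace(A^{\Omega}(v,\cdot))=|\Omega(v)|^2+\tfrac14\sum_{i,j=2}^n\langle\Omega(e_i),e_j\rangle^2\geq0$, which vanishes iff $\Omega(v)=0$ and $\langle\Omega(e_i),e_j\rangle=0$ for all $i,j\geq2$; together with $\langle\Omega(e_i),v\rangle=-\langle e_i,\Omega(v)\rangle=0$ this forces $\Omega(e_i)=0$ for every $i$, i.e.\ $\Omega_p=0$, and the converse is immediate.

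There is essentially no hard step: the whole content is the identity $-\langle\Omega^2(w),w\rangle=|\Omega(w)|^2$ and careful bookkeeping with the chosen basis. The only points requiring a little care are the equality analyses — noticing that once $\Omega(w)=0$ the term $\langle w,\Omega(v)\rangle$ is automatically zero in (i), and in (ii) that the ``mixed'' contributions $\langle\Omega(e_i),v\rangle$ are already absorbed into the $\langle e_i,\Omega(v)\rangle^2$ terms, so that vanishing of the displayed sum genuinely forces $\Omega_p=0$ and not merely $\Omega(v)=0$. I would also record that $\trace(A^{\Omega}(v,\cdot))$ is well defined as the trace of an endomorphism of $E_{(p,v)}$, so the basis-dependent computation above is legitimate.
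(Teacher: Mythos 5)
Your proposal is correct and follows essentially the same route as the paper: expand $\langle A^{\Omega}(v,w),w\rangle=\tfrac34\langle w,\Omega(v)\rangle^2+\tfrac14|\Omega(w)|^2$ using antisymmetry of $\Omega$ and $\langle v,w\rangle=0$, then trace over an orthonormal basis of $E_{(p,v)}$; your expression $|\Omega(v)|^2+\tfrac14\sum_{i,j\geq2}\langle\Omega(e_i),e_j\rangle^2$ coincides with the paper's \eqref{terms}. Your treatment of the equality cases is just a more explicit spelling-out of what the paper leaves implicit.
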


\begin{pro}\label{cor1} Let $(M,g,\sigma)$ be a magnetic system. The followings hold:

\begin{itemize}
\item[(i)] if $\sigma$ is symplectic then there exists a $k_0>0$ such that $\Msec>0$ for every $k\in (0,k_0)$;
\item[(ii)] if $\sigma$ is nowhere vanishing then there exists a $k_0>0$ such that $\Mric>0$  for every $k\in (0,k_0)$. 
\end{itemize}

\begin{proof}
If $\sigma$ is symplectic then, by the compatibility condition~\eqref{compa}, it follows that $\Omega(w)\neq 0$ for every $(p,w)\in TM$. In particular, \eqref{A2} implies that $\langle A^{\Omega}(v ,w), w \rangle$ is strictly positive for every $(v,w) \in E^1$. Analogously, if $\sigma$ is nowhere vanishing, then at least one term in \eqref{terms} is different from zero which implies that the trace of $A^{\Omega}$ is strictly positive for every $v \in SM$. Since $E^1$ and $SM$ are compact and since the operator $A^{\Omega}$ does not depend from $k$, if $\sigma$ is symplectic then for small values of $k$ it follows that
\begin{equation*}
\Msec = \langle R^{\Omega}_{k}(v,w), w \rangle + \langle A^{\Omega}(v,w),w \rangle>0  , \ \forall (v,w) \in E^1  .
\end{equation*}
If $\sigma$ is nowhere vanishing, then for small values of $k$, it also holds that
\begin{equation*}
\Mric = \mathrm{trace}\Big(R^{\Omega}_k(v,\cdot) \Big) + \mathrm{trace}\Big(A^{\Omega}(v,\cdot)   \Big)>0 \ , \ \forall v\in SM  .
\end{equation*}
The statement follows. 

\end{proof}
\end{pro}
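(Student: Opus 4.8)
The plan is to reduce everything to Lemma~\ref{aomega} together with a compactness argument: on the compact bundles $E^1$ and $SM$ the $A^{\Omega}$-contribution to $\Msec$ and $\Mric$ is bounded below by a strictly positive constant under the symplectic (resp.\ nowhere-vanishing) hypothesis, whereas the $R^{\Omega}_k$-contribution is $O(\sqrt{k})$ as $k\to 0^+$ and is therefore negligible for $k$ small.

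\emph{Step 1: a uniform lower bound for the $A^{\Omega}$-term.} For (i), if $\sigma$ is symplectic then by \eqref{compa} the operator $\Omega_p$ is nondegenerate at each $p$, hence $\Omega(w)\neq 0$ whenever $w\neq 0$; by \eqref{A2} (i.e.\ Lemma~\ref{aomega}(i)) this gives $\langle A^{\Omega}(v,w),w\rangle>0$ for every $(v,w)\in E^1$. Since $E^1$ is compact and $(v,w)\mapsto\langle A^{\Omega}(v,w),w\rangle$ is continuous and $k$-independent, there is $\delta>0$ with $\langle A^{\Omega}(v,w),w\rangle\geq\delta$ on all of $E^1$. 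For (ii), if $\sigma$ is nowhere vanishing then $\Omega_p\neq 0$ for every $p$, so by \eqref{terms} (Lemma~\ref{aomega}(ii)) $\trace(A^{\Omega}(v,\cdot))>0$ for every $v\in SM$, and compactness of $SM$ again yields a uniform lower bound $\delta>0$.

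\emph{Step 2: the Riemannian term is small.} Combining \eqref{msec} and \eqref{A2} one reads off $\langle R^{\Omega}_k(v,w),w\rangle=2k\,\mathrm{Sec}(v,w)-\sqrt{2k}\,\langle(\nabla_w\Omega)(v),w\rangle$ for $(v,w)\in E^1$, and \eqref{mricc} gives $\trace(R^{\Omega}_k(v,\cdot))=2k\,\mathrm{Ric}(v)-\sqrt{2k}\,\trace((\nabla\Omega)(v))$ for $v\in SM$. Since $M$ is closed and $R$, $\Omega$, $\nabla\Omega$ are smooth, all the coefficients above are bounded on the compact bundles $E^1$ and $SM$; hence there is a constant $C>0$, independent of $k$, with $\bigl|\langle R^{\Omega}_k(v,w),w\rangle\bigr|\leq 2kC+\sqrt{2k}\,C$ on $E^1$ and $\bigl|\trace(R^{\Omega}_k(v,\cdot))\bigr|\leq 2kC+\sqrt{2k}\,C$ on $SM$.

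\emph{Step 3: conclusion.} Choose $k_0>0$ small enough that $2kC+\sqrt{2k}\,C<\delta$ for all $k\in(0,k_0)$, which is possible because the left-hand side tends to $0$ as $k\to 0^+$. Then for $k\in(0,k_0)$ and $(v,w)\in E^1$,
\begin{equation*}
\Msec(v,w)=\langle R^{\Omega}_k(v,w),w\rangle+\langle A^{\Omega}(v,w),w\rangle\geq\delta-2kC-\sqrt{2k}\,C>0,
\end{equation*}
which proves (i); replacing quadratic forms by traces gives the same chain of inequalities for $\Mric$ on $SM$, proving (ii). I do not expect a genuine obstacle here: the only point requiring a little care is the uniformity in Step~2 of the bound on $R^{\Omega}_k$, and this is immediate from the compactness of $M$ (hence of $E^1$ and $SM$) and the continuity of the tensors involved. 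In other words, the entire content of the proposition is really Lemma~\ref{aomega}: the operator $A^{\Omega}$, absent from earlier definitions of magnetic curvature, is precisely what makes low-energy positivity survive.
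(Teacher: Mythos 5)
Your proposal is correct and follows essentially the same route as the paper: positivity of the $k$-independent $A^{\Omega}$-term via Lemma \ref{aomega}, a uniform lower bound from compactness of $E^1$ and $SM$, and the observation that the $R^{\Omega}_k$-contribution vanishes (uniformly) as $k\to 0^+$. You merely spell out the constants $\delta$ and $C$ that the paper leaves implicit.
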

\subsection{Symplectic magnetic forms in dimension 2.} \label{surfaces}
In this subsection $(M,g)$ is a closed oriented Riemannian surface and we denote by $\K:M \to \R$ its Gaussian curvature. If $\sigma$ is a closed two form on $M$, there exists a smooth function $b:M \to \R$ such that 
\begin{equation}\label{volume}
\sigma = b  \mu_g  ,
\end{equation}
 where $\mu_g$ is the volume form on $M$ induced by the metric $g$. For simplicity we refer to a magnetic form $\sigma$ through the function $b$ given in~\eqref{volume}. Observe that, in any orthonormal basis the operator $\Omega$ enjoys the following identities
 \begin{equation}\label{ope}
  \Omega = b  \mathrm{J}  \ \mathrm{and} \  \nabla_v \Omega = \mathrm{d}b(v)\mathrm{J}  ,
 \end{equation}
 where $\mathrm{J}$ denotes the rotation of an angle $\frac{\pi}{2}$ in accord with the fixed orientation.
 Due to dimensional reasons, $\mathrm{Ric}_k^b$ coincides with $\mathrm{Sec}_k^{b}$ and its expression in terms of $b$ is pointed out by the next lemma.
 \begin{lemma} Let $(M,g,b)$ be a magnetic system on a closed oriented surface. Then
 \begin{equation*}
 \mathrm{Sec}^b_k(v) = 2k \K - \sqrt{2k} \mathrm{d}b\left(\mathrm{J} v\right) + b^2 ,
 \end{equation*}
 \begin{proof}
Fix $\{ v , e_2= \mathrm{J} \cdot v \}$ as orthonormal basis. By definition \eqref{msec} and by \eqref{ope} we obtain that
 \begin{align*}
 \mathrm{Sec}^b_k(v)& = 2k \mathrm{Sec}(v,e_2) - \sqrt{2k}\langle (\nabla_{e_2} \Omega )(v) , e_2 \rangle + \langle A^{\Omega}(v,e_2),e_2 \rangle \\
 & = 2k  \K - \sqrt{2k}\langle \mathrm{d}b(e_2) \mathrm{J} v , e_2 \rangle + \frac{3}{4}b^2 + \frac{1}{4}b^2 \\ 
 & = 2k  \K - \sqrt{2k}  \mathrm{d}b(\mathrm{J} v) + b^2  . 
 \end{align*}
 \end{proof}
 \end{lemma}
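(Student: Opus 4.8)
The plan is to compute $\mathrm{Sec}^b_k(v)$ directly from the definition~\eqref{msec} specialized to dimension two, using the identities~\eqref{ope} for $\Omega$ and $\nabla\Omega$ on a surface. First I would fix the orthonormal basis $\{v, e_2 = \mathrm{J}v\}$ of $T_pM$, which is the natural frame to use since $e_2$ is the unique (up to sign) unit vector orthogonal to $v$, so that $\mathrm{Sec}^b_k(v) = \mathrm{Sec}^b_k(v,e_2)$ with no ambiguity. In this basis $\mathrm{Sec}(v,e_2)$ is exactly the Gaussian curvature $\K(p)$ by definition of Gaussian curvature.

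Next I would handle the three remaining terms in~\eqref{msec}. For the first-order term, $\nabla_{e_2}\Omega = \mathrm{d}b(e_2)\,\mathrm{J}$ by~\eqref{ope}, so $\langle(\nabla_{e_2}\Omega)(v), e_2\rangle = \mathrm{d}b(e_2)\langle \mathrm{J}v, e_2\rangle = \mathrm{d}b(e_2) = \mathrm{d}b(\mathrm{J}v)$ since $\mathrm{J}v = e_2$. For the zeroth-order terms, $\Omega(v) = b\,\mathrm{J}v = b\,e_2$, so $\langle e_2, \Omega(v)\rangle = b$ and hence $\tfrac{3}{4}\langle e_2,\Omega(v)\rangle^2 = \tfrac{3}{4}b^2$; likewise $\Omega(e_2) = b\,\mathrm{J}e_2 = -b\,v$, so $|\Omega(e_2)|^2 = b^2$ and $\tfrac{1}{4}|\Omega(e_2)|^2 = \tfrac{1}{4}b^2$. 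Adding these gives $b^2$. Collecting all four contributions yields $\mathrm{Sec}^b_k(v) = 2k\K - \sqrt{2k}\,\mathrm{d}b(\mathrm{J}v) + b^2$, which is the claim. (As a consistency check, one can equally use the operator form~\eqref{dmsec} with $A^{\Omega}$ as in the lemma's displayed proof, where the $\tfrac34 b^2 + \tfrac14 b^2$ split is visible; the two routes agree.)

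There is essentially no obstacle here: the only thing to be careful about is the sign and normalization conventions for $\mathrm{J}$ (rotation by $\tfrac{\pi}{2}$ consistent with the orientation), making sure that $\mathrm{J}^2 = -\mathrm{id}$ so that $\mathrm{J}e_2 = \mathrm{J}^2 v = -v$, and that $\langle \mathrm{J}v, e_2\rangle = +1$ with the chosen $e_2 = \mathrm{J}v$. One should also note that the formula is independent of the sign choice for $e_2$: replacing $e_2$ by $-e_2$ leaves $\mathrm{Sec}(v,e_2)$, $\langle(\nabla_{e_2}\Omega)(v),e_2\rangle$, and the quadratic terms all unchanged, so $\mathrm{Sec}^b_k(v)$ is genuinely a function on $SM$ rather than on $E^1$. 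The computation is then just bookkeeping with~\eqref{ope}.
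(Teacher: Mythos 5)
Your computation is correct and follows essentially the same route as the paper: fix the basis $\{v, e_2=\mathrm{J}v\}$, apply the identities $\Omega=b\mathrm{J}$ and $\nabla_w\Omega=\mathrm{d}b(w)\mathrm{J}$ in definition \eqref{msec}, and collect $\tfrac34 b^2+\tfrac14 b^2=b^2$. The extra remarks on the sign of $e_2$ and on $\mathrm{J}^2=-\mathrm{id}$ are accurate but not needed beyond what the paper already does.
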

In this setting, Theorem B is reformulated as follows
\begin{teorB}  Let $(M,g,b)$ be a magnetic system on a closed oriented surface. If there exists a positive real constant $k_0$ such that $ \mathrm{Sec}^b_k > 0$ for every $k \in (0,k_0)$, then either $b$ is nowhere zero or $b$ is constantly zero and $\K>0$ (and $M=S^2$).
\begin{proof}
Suppose that for small values of $k$ we have that $\mathrm{Sec}^b_k>0$. Denote by $S$ the subset of $M$ defined as
\begin{equation*}
S = \{ p \in M \ | \ b(p)=0 \}  .
\end{equation*}
This subset is closed by definition. If we show that $S$ is also open, the statement follows. Let $p \in S$ and for an arbitrary small radius $r$, let $B_r(p)$ be an open ball of $M$ centered at $p$. Let $q \in B_r(p)$ be such that 
\begin{equation*}
|b(q)| = \max _{z \in B_r(p)} |b(z)| \ . 
\end{equation*}
Assume that $|b(q)|\neq 0$ and denote by $|\mathrm{d}b|_{\infty}$ the uniform norm of $\mathrm{d}b$. By assumption, for small values of $k$ it yields that
\begin{equation*}
2k \K + b^2 > \sqrt{2k}|\mathrm{d}b|_{\infty} , 
\end{equation*}
which implies that
\begin{equation*}
|b(q) - b(p) |  \leq \int_0^{d(p,q)} |\mathrm{d}b|_{\infty} \mathrm{d}t \leq \frac{r}{\sqrt{2k}} ( 2k \max_{z \in B_r(p)} \K(z) + |b(q)|^2) = r \sqrt{2k} \max_{z \in B_r(p)} \K(z) + \frac{r}{\sqrt{2k}} |b(q)|^2.
\end{equation*}
Since $b(p)=0$, up to shrinking $r$ we can choose $k$ such that $\sqrt{2k}=|b(q)|$ and deduce from the above inequality that
\begin{equation*}
|b(q)| \leq (r \max_{z \in B_r(p)} \K(z) + r ) |b(q)|  . 
\end{equation*}
Up to shrinking again $r$ and $k$, it yelds that  
\begin{equation*}
r \max_{z \in B_r(p)} \K(z) + r<1  , 
\end{equation*}
 which necessary implies that $|b(q)|=0$. Therefore, for every $p\in S$ we can find an open ball $B_r(p)$ centered such that $b|_{B_r(p)}=0$ which concludes the proof.
 
\end{proof}
\end{teorB}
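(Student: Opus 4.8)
The plan is to prove the statement by a connectedness argument: show that the set $S = \{p \in M : b(p) = 0\}$ is both closed and open in $M$; since $M$ is connected, either $S = \emptyset$ (so $b$ is nowhere vanishing, hence $\sigma$ is symplectic) or $S = M$ (so $b \equiv 0$, and then the positivity of $\mathrm{Sec}^b_k = 2k\K$ for small $k$ forces $\K > 0$, which by Gauss--Bonnet on a closed oriented surface gives $M = S^2$).

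Closedness of $S$ is immediate by continuity of $b$. The heart of the matter is openness. I would fix $p \in S$ and a small ball $B_r(p)$, let $q \in \overline{B_r(p)}$ be a point where $|b|$ attains its maximum on that ball, and argue by contradiction that $|b(q)| = 0$. The mechanism is the curvature hypothesis: for all sufficiently small $k > 0$ one has $\mathrm{Sec}^b_k(v) = 2k\K - \sqrt{2k}\,\mathrm{d}b(\mathrm{J}v) + b^2 > 0$ everywhere, which rearranges to the pointwise bound $\sqrt{2k}\,|\mathrm{d}b| < 2k\K + b^2 \le 2k\,\max_{B_r(p)}\K + |b(q)|^2$ on $B_r(p)$. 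Integrating $|\mathrm{d}b|$ along a minimizing geodesic from $p$ to $q$ and using $b(p) = 0$ yields
\begin{equation*}
|b(q)| \;\le\; \frac{d(p,q)}{\sqrt{2k}}\Bigl(2k\,\max_{B_r(p)}\K + |b(q)|^2\Bigr) \;\le\; r\sqrt{2k}\,\max_{B_r(p)}\K + \frac{r}{\sqrt{2k}}\,|b(q)|^2.
\end{equation*}
The key trick is that $k$ is a free parameter: assuming $|b(q)| \neq 0$, choose $k$ with $\sqrt{2k} = |b(q)|$ (legitimate once $r$ is small enough that this $k$ lies in the admissible range), which collapses the inequality to $|b(q)| \le \bigl(r\,\max_{B_r(p)}\K + r\bigr)|b(q)|$. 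Shrinking $r$ further so that $r\,\max_{B_r(p)}\K + r < 1$ gives a contradiction, forcing $b \equiv 0$ on $B_r(p)$, i.e.\ $B_r(p) \subseteq S$.

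The main obstacle — really a bookkeeping subtlety rather than a deep difficulty — is ensuring the order of quantifiers is consistent: the choice of $k$ depends on $q$ (through $|b(q)|$), which depends on $r$, so one must check that for each fixed small $r$ there genuinely is an admissible $k$ (lying in $(0,k_0)$) realizing $\sqrt{2k} = |b(q)|$, and that all the ``up to shrinking $r$'' and ``up to shrinking $k$'' steps can be performed compatibly. Since $b(p) = 0$ and $b$ is continuous, $|b(q)| \to 0$ as $r \to 0$, so $k = |b(q)|^2/2 \to 0$ as well, and it lands in $(0,k_0)$ for $r$ small; this is what makes the argument close up. Finally, the last clause $M = S^2$ in the $b \equiv 0$ case is a one-line invocation of Gauss--Bonnet (a closed oriented surface with $\K > 0$ has positive Euler characteristic, hence is the sphere), so no further work is needed there.
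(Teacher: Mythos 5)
Your proposal is correct and follows essentially the same route as the paper's proof: the zero set $S$, the pointwise bound $\sqrt{2k}\,|\mathrm{d}b| < 2k\K + b^2$ from positivity of $\mathrm{Sec}^b_k$, integration along a minimizing path to the maximizer $q$, the choice $\sqrt{2k}=|b(q)|$, and the final shrinking of $r$ to force $|b(q)|=0$, with Gauss--Bonnet handling the $b\equiv 0$ case. Your explicit check that $k=|b(q)|^2/2$ lands in $(0,k_0)$ for small $r$ makes the quantifier bookkeeping cleaner than the paper's ``up to shrinking'' phrasing, but the argument is the same.
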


\section{Magnetic curvature and Hessian of $\eta_k$} \label{hessian}
Let $\gamma\in \Z(\eta_k)$ and consider the splitting $\speed \oplus \{ \speed \}^{\perp}$ of $\gamma^* TM$, where $\{ \speed \}^{\perp}$ is the orthogonal complement of $\speed$. We decompose a vector field along $\gamma$ into $V=V_1+V_2$, where $V_1$ denotes its component along $\speed$ and $V_2$ its orthogonal component. To avoid any kind of confusion we adopt the following notations: $\frac{D}{\mathrm{d}t} V_1=\dot{V_1}$ and $(\dot{V})_1=\langle \dot{V},\speed \rangle \frac{\speed}{|\speed|^2}$. Analogously, $\frac{D}{\mathrm{d}t} V_2=\dot{V_2}$ while $(\dot{V})_2$ indicate the orthogonal projection of $\dot{V}$ on $\{ \speed \}^{\perp}$. The next crucial lemma shows how such a splitting let us write the Hessian of $\eta_k$ in terms of the magnetic curvature.

\begin{lemma}\label{lemmasec}  Let $V=V_1+V_2$ be a variation along $\gamma$ and $\tau \in \R$. Then
\begin{equation}\label{crucial}
\begin{aligned}
    \hess(V,\tau)=\int_0^T  \Big|(\dot{V})_2-\frac{1}{2}(\Omega(V_1)&+\Omega(V))_2\Big|^2   -\int_0^T  |V_2|^2 \Msec\Big(\frac{\speed}{|\speed|},\frac{V_2}{|V_2|}\Big) \mathrm{d}t   \\ 
    &+\int_0^T \Big(\frac{\langle \dot{V},\speed \rangle}{|\speed|} - \frac{\tau}{T}|\speed|\Big)^2  \mathrm{d}t . 
\end{aligned}
\end{equation}

\begin{proof} Preliminarily, we point out the following identities:
\begin{equation}\label{ide1}
(\nabla_{V_1} \Omega)(\speed)=(\nabla_{\speed}\Omega)(V_1)  , 
\end{equation}
\begin{equation}\label{ide2}
     \dot{V}_1=(\dot{V})_1-\Omega(V)_1+\Omega(V_1)  , 
    \end{equation}
\begin{equation}\label{ide3}
        \frac{d}{\mathrm{d}t} \langle \Omega(V_1),V \rangle=\langle (\nabla_{\speed}\Omega)(V_1),V \rangle + \langle\Omega(\dot{V}_1),V \rangle + \langle \Omega(V_1),\dot{V} \rangle   . 
    \end{equation}

Decompose $V$ into its component $V_1$ and $V_2$ in the expression of $\hess$ in Lemma \ref{secondvariation}. By the fact that
\begin{equation*}
\Big\langle R^{\Omega}_k\left(\frac{\speed}{|\speed|},V_2\right),V_2\Big\rangle = \langle R(V_2,\speed)\speed-\nabla_{V_2} \Omega (\speed),  V_2 \rangle,
\end{equation*}
and by identity \eqref{ide1}, we first obtain that
\begin{equation}\label{conto1}
\hess(V,\tau)=\int_0^T P(V) \mathrm{d}t   -\int_0^T \Big\langle R^{\Omega}_k\left(\frac{\speed}{|\speed|},V_2\right),V_2\Big\rangle \mathrm{d}t   + \int_0^T\Big(\frac{\langle \dot{V},\speed \rangle}{|\speed|} - \frac{\tau}{T}|\speed|\Big)^2\mathrm{d}t  , 
\end{equation}
where $P(V)= |(\dot{V})_2|^2+ \langle (\nabla_{\speed}\Omega)(V_1),V \rangle - \langle \dot{V}, \Omega(V) \rangle $. With the help of \eqref{ide2}, \eqref{ide3} and a Stokes argument it yields that

\begin{flalign*}
\int_0^T P(V)\mathrm{d}t&=\int_0^T \Big\{ |(\dot{V})_2|^2 + \frac{d}{dt} \langle \Omega(V_1),V \rangle - \langle\Omega(\dot{V}_1),V \rangle - \langle \Omega(V_1),\dot{V} \rangle  - \langle \dot{V}, \Omega(V) \rangle    \Big\}  \mathrm{d}t&& \\\nonumber
&=\int_0^T \Big\{ |(\dot{V})_2|^2  + \langle \dot{V}_1,\Omega(V) \rangle - \langle \Omega(V_1), (\dot{V})_2 \rangle - \langle (\dot{V})_2, \Omega(V) \rangle - \langle (\dot{V})_1,\Omega(V) \rangle  \Big\}  \mathrm{d}t&& \\\nonumber
&=\int_0^T \Big\{ |(\dot{V})_2|^2- \langle \Omega(V_1)+\Omega(V)_2,(\dot{V})_2 \rangle+\langle \dot{V}_1-(\dot{V})_1, \Omega(V)\rangle \Big\} \mathrm{d}t&& \\\nonumber
&=\int_0^T \Big\{ |(\dot{V})_2- \frac{1}{2}(\Omega(V_1)+\Omega(V))_2|^2- H(V) \Big\}  \mathrm{d}t  , && \\\nonumber
\end{flalign*}
where by stressing the computation we write 
\begin{equation*}
H(V)=\frac{1}{4}|\Omega(V_1)+\Omega(V)_2|^2+ \langle \Omega(V)_1,\Omega(V) \rangle - \langle \Omega(V_1),\Omega(V)_2 \rangle.
\end{equation*}
Consider definition \eqref{operatorA} of $A^{\Omega}$ to point out that
 \begin{flalign*}
H(V)&= \frac{1}{4}|2\Omega(V_1)+\Omega(V_2)_2|^2+|\Omega(V_2)_1|^2+\langle \Omega(V_1),\Omega(V_1)\rangle - \langle \Omega(V_1),\Omega(V_2)_2 \rangle&&  \\\nonumber
&= \frac{1}{4}|\Omega(V_2)_2|^2+|\Omega(V_2)_1|^2  &&\\ \nonumber
&= \frac{1}{4}|\Omega(V_2)|^2+\frac{3}{4}|\Omega(V_2)_1|^2  &&\\ \nonumber
&=  \Big\langle A^{\Omega}\left(\frac{\speed}{|\speed|},V_2\right),V_2\Big\rangle . &&\\ \nonumber
 \end{flalign*}
 Therefore, we have that
\begin{equation*}
\int_0^T P(V)\mathrm{d}t = \int_0^T \Big\{  |(\dot{V})_2+ \frac{1}{2}(\Omega(V_1)+\Omega(V))_2|^2 -  \Big\langle A^{\Omega}\left(\frac{\speed}{|\speed|},V_2\right),V_2\Big\rangle \Big\}  \mathrm{d}t . 
\end{equation*}
By definition \eqref{dmsec} of $\Msec$, the statement follows with the substitution of the above identity in \eqref{conto1}. 
 
\end{proof}
\end{lemma}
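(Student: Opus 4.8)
The plan is to start from the formula for $\hess$ in Lemma~\ref{secondvariation} and rearrange it along the orthogonal splitting $\gamma^{*}TM=\R\speed\oplus\{\speed\}^{\perp}$, writing $V=V_1+V_2$ throughout. The term $\int_0^T(\langle\dot V,\speed\rangle/|\speed|-\frac{\tau}{T}|\speed|)^2\,\mathrm{d}t$ is already common to both Lemma~\ref{secondvariation} and \eqref{crucial}, so I would carry it along untouched. For the rest, I would first use $|\dot V|^2=|(\dot V)_1|^2+|(\dot V)_2|^2$ together with $|(\dot V)_1|^2=\langle\dot V,\speed\rangle^2/|\speed|^2$, so that the negative term $-\int_0^T\langle\dot V,\speed\rangle^2/|\speed|^2\,\mathrm{d}t$ from Lemma~\ref{secondvariation} exactly cancels $|(\dot V)_1|^2$, leaving the kinetic part as $\int_0^T|(\dot V)_2|^2\,\mathrm{d}t$.

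Next I would isolate the curvature-type term $-\int_0^T\langle R(V,\speed)\speed-(\nabla_V\Omega)(\speed),V\rangle\,\mathrm{d}t$. Since $R(\speed,\cdot)\speed$ kills the $\speed$-direction, only $V_2$ survives in the Riemannian part. For the $\nabla\Omega$ part, I would first establish the two pointwise facts that drive everything: $(\nabla_{V_1}\Omega)(\speed)=(\nabla_{\speed}\Omega)(V_1)$ (valid because $V_1$ is pointwise proportional to $\speed$ and $\nabla\Omega$ is tensorial in both slots) and $\dot V_1=(\dot V)_1-\Omega(V)_1+\Omega(V_1)$ (obtained by differentiating $V_1=\langle V,\speed\rangle\speed/|\speed|^2$ along the magnetic geodesic $\gamma$). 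Using the first of these, antisymmetry of $\Omega$, the orthogonality $V_2\perp\speed$ and the identity $\mathrm{d}\sigma=0$, the combination $\langle R(V_2,\speed)\speed-(\nabla_{V_2}\Omega)(\speed),V_2\rangle$ collapses to $\langle R^{\Omega}_k(\speed/|\speed|,V_2),V_2\rangle$ directly from~\eqref{Rmag} (here the factors $2k$, $\sqrt{2k}$ in~\eqref{Rmag} absorb the normalization $|\speed|=\sqrt{2k}$). At this point the Hessian reads $\hess(V,\tau)=\int_0^T P(V)\,\mathrm{d}t-\int_0^T\langle R^{\Omega}_k(\speed/|\speed|,V_2),V_2\rangle\,\mathrm{d}t+\int_0^T(\langle\dot V,\speed\rangle/|\speed|-\frac{\tau}{T}|\speed|)^2\,\mathrm{d}t$, with $P(V):=|(\dot V)_2|^2+\langle(\nabla_{\speed}\Omega)(V_1),V\rangle-\langle\dot V,\Omega(V)\rangle$.

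The heart of the argument is to rewrite $\int_0^T P(V)\,\mathrm{d}t$ as a perfect square minus the quadratic form of $A^{\Omega}$. I would remove the $\nabla\Omega$-term by Stokes: integrating $\frac{\mathrm{d}}{\mathrm{d}t}\langle\Omega(V_1),V\rangle$ over the closed curve gives $0$, and the Leibniz rule trades $\langle(\nabla_{\speed}\Omega)(V_1),V\rangle$ for $-\langle\Omega(\dot V_1),V\rangle-\langle\Omega(V_1),\dot V\rangle$. Then I would substitute $\dot V_1=(\dot V)_1-\Omega(V)_1+\Omega(V_1)$, use antisymmetry of $\Omega$ and the fact that $\Omega(V_1)\perp\speed$ repeatedly, and complete the square to land on $\int_0^T\{|(\dot V)_2-\tfrac12(\Omega(V_1)+\Omega(V))_2|^2-H(V)\}\,\mathrm{d}t$ with $H(V)=\tfrac14|\Omega(V_2)|^2+\tfrac34|\Omega(V_2)_1|^2$; comparing with~\eqref{operatorA} and using $V_2\perp\speed$ identifies $H(V)=\langle A^{\Omega}(\speed/|\speed|,V_2),V_2\rangle$. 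Summing the two pieces gives $\hess(V,\tau)=\int_0^T|(\dot V)_2-\tfrac12(\Omega(V_1)+\Omega(V))_2|^2\,\mathrm{d}t-\int_0^T\langle \Mo(\speed/|\speed|,V_2),V_2\rangle\,\mathrm{d}t+\int_0^T(\langle\dot V,\speed\rangle/|\speed|-\frac{\tau}{T}|\speed|)^2\,\mathrm{d}t$, and since $\Mo(v,\cdot)$ is linear one has $\langle\Mo(\speed/|\speed|,V_2),V_2\rangle=|V_2|^2\,\Msec(\speed/|\speed|,V_2/|V_2|)$, which is~\eqref{crucial}.

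The main obstacle is not conceptual but combinatorial: the whole argument is a careful accounting of the orthogonal decomposition, and one has to keep precise track of which cross terms vanish by antisymmetry ($\langle\Omega(w),w\rangle=0$, $\langle(\nabla_X\Omega)(w),w\rangle=0$, $\Omega(V_1)\perp\speed$) and which do not, so that completing the square produces \emph{exactly} the quadratic form of $A^{\Omega}$ with no leftover. The two spots where an error would most easily creep in are the reduction of the Jacobi-type term to $R^{\Omega}_k$ (which genuinely needs $\mathrm{d}\sigma=0$) and the Stokes step (which needs $\Omega(V_1)$ and $V$ to be sufficiently regular along the closed curve $\gamma$, which holds here).
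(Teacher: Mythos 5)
Your proposal is correct and follows essentially the same route as the paper: the same splitting $V=V_1+V_2$, the same preliminary identities $(\nabla_{V_1}\Omega)(\speed)=(\nabla_{\speed}\Omega)(V_1)$ and $\dot V_1=(\dot V)_1-\Omega(V)_1+\Omega(V_1)$, the Stokes argument on $\frac{\mathrm{d}}{\mathrm{d}t}\langle\Omega(V_1),V\rangle$, completion of the square, identification of the leftover with $\langle A^{\Omega}(\speed/|\speed|,V_2),V_2\rangle$, and reassembly into $\Mo$. One small remark: the collapse of the Jacobi-type term to $\langle R^{\Omega}_k(\speed/|\speed|,V_2),V_2\rangle$ needs only the antisymmetry of $\nabla_v\Omega$ and $V_2\perp\speed$ (the quadratic form does not see the extra terms of \eqref{Rmag}), so $\mathrm{d}\sigma=0$ is not actually required at that step.
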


The next lemma shows how we can always construct variations along $\gamma$ such that their evaluation in $\hess$ has no terms which depend on the tangential component.
\begin{lemma}\label{tecnica1}
Let $V$ be a variation along $\gamma$ such that $\langle V,\speed \rangle=0$. Then there exists a periodic function $g:[0,T] \to \R$ and a real constant $\tau$ depending linearly on $V$ such that, if we write $W=V+g\speed$, then 
\begin{equation}
\hess(W,\tau)= \int_0^T  \Big|(\dot{V})_2-\frac{1}{2} \Omega(V)_2\Big|^2  \mathrm{d}t  -\int_0^T |V|^2\Msec\Big(\frac{\speed}{|\speed|},\frac{V}{|V|}\Big) \mathrm{d}t  . 
\end{equation}
\begin{proof}
Consider $g$ and $\tau$ defined as follows
\begin{equation*}
g(t)=-\int_0^t \Big\{ \frac{\langle \dot{V},\speed \rangle}{|\speed|^2} - \frac{\tau}{T} \Big\} \mathrm{d}t \ \ , \ \  \tau= \int_0^T \frac{\langle \dot{V},\speed \rangle}{|\speed|^2} \mathrm{d}t  .
\end{equation*}
In particular, the couple $(g,\tau)$ satisfies the differential problem along $\gamma$ given by
\begin{equation}\label{differentialproblem}
\begin{cases}
    \dot{g}+\frac{\langle \dot{V},\speed \rangle}{|\speed|^2} - \frac{\tau}{T}=0 \\
    g(0)=g(T)=0
    \end{cases}
\end{equation}
Thus if $W=V+g\speed$, then
\begin{equation}\label{usa1}
 \Big(\frac{\langle \dot{W},\speed \rangle}{|\speed|} - \frac{\tau}{T}|\speed|\Big)^2 =  |\speed|^2\Big(\frac{\langle \dot{V},\speed \rangle}{|\speed|^2}+\dot{g} - \frac{\tau}{T}\Big)^2 =0  , 
\end{equation}
and
\begin{equation}\label{usa2}
(\dot{W})_2-\frac{1}{2}(\Omega(W_1)+\Omega(W))_2 =(\dot{V})_2+g\ddot{\gamma}-g\Omega(\speed)-\frac{1}{2}\Omega(V)_2=(\dot{V})_2-\frac{1}{2}\Omega(V)_2  .
\end{equation}
By using~\eqref{usa1} and~\eqref{usa2} in the expression~\eqref{crucial} of Lemma \ref{lemmasec} the statement follows.  \\
\end{proof}
\end{lemma}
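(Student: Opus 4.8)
The plan is to apply the identity of Lemma~\ref{lemmasec} to the vector field $W=V+g\speed$ along $\gamma$ and to use up the freedom in the still-undetermined periodic function $g$ and constant $\tau$. The right-hand side of that identity, written for $W$, already contains a term of the form $-\int_0^T|W_2|^2\Msec(\speed/|\speed|,W_2/|W_2|)\,\mathrm{d}t$ with $W_2=V$, so the only thing to do is to arrange that the ``Synge square'' $\int_0^T\big(\tfrac{\langle\dot W,\speed\rangle}{|\speed|}-\tfrac{\tau}{T}|\speed|\big)^2\,\mathrm{d}t$ vanishes identically and that the remaining square $\big|(\dot W)_2-\tfrac12(\Omega(W_1)+\Omega(W))_2\big|^2$ collapses to $\big|(\dot V)_2-\tfrac12\Omega(V)_2\big|^2$. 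All of the ``choice'' is in the pair $(g,\tau)$; once the Synge square is killed these become completely pinned down.

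First I would determine $(g,\tau)$. With $\langle V,\speed\rangle=0$, differentiating $W=V+g\speed$ along $\gamma$ and using the magnetic geodesic equation $\dt\speed=\Omega(\speed)$ together with $\langle\Omega(\speed),\speed\rangle=\sigma(\speed,\speed)=0$ gives $\langle\dot W,\speed\rangle=\langle\dot V,\speed\rangle+\dot g\,|\speed|^2$. Thus the Synge square equals $\int_0^T|\speed|^2\big(\dot g+\tfrac{\langle\dot V,\speed\rangle}{|\speed|^2}-\tfrac{\tau}{T}\big)^2\,\mathrm{d}t$, which vanishes exactly when $g$ solves the linear first-order ODE $\dot g=\tfrac{\tau}{T}-\tfrac{\langle\dot V,\speed\rangle}{|\speed|^2}$. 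Integrating over $[0,T]$ and requiring $g(0)=g(T)=0$ forces $\tau=\int_0^T\tfrac{\langle\dot V,\speed\rangle}{|\speed|^2}\,\mathrm{d}t$ and then $g(t)=\int_0^t\big(\tfrac{\tau}{T}-\tfrac{\langle\dot V,\speed\rangle}{|\speed|^2}\big)\,\mathrm{d}s$; both are manifestly linear in $V$ and $g$ is periodic by construction.

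It then remains to substitute $W=V+g\speed$ into~\eqref{crucial}. Since $W_1=g\speed$ and $W_2=V$ one has $|W_2|=|V|$ and $W_2/|W_2|=V/|V|$, so the curvature term is already in the claimed form. For the remaining square, differentiating $W_1=g\speed$ and again using $\dt\speed=\Omega(\speed)$ gives $(\dot W)_2=(\dot V)_2+g\,\Omega(\speed)$ (the $\dot g\speed$ part is tangential and $\Omega(\speed)$ is normal), while $\tfrac12(\Omega(W_1)+\Omega(W))_2=g\,\Omega(\speed)+\tfrac12\Omega(V)_2$; the two $g\,\Omega(\speed)$ contributions cancel, leaving exactly $(\dot V)_2-\tfrac12\Omega(V)_2$. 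Feeding these facts and the vanishing of the Synge square into~\eqref{crucial} yields the asserted identity. The step I expect to be most error-prone — though not deep — is keeping the tangential/normal splitting straight along $\gamma$ (distinguishing $\dot{V_1}$ from $(\dot V)_1$ and $\Omega(V)_2$ from $\Omega(V_2)$) and remembering that $\ddot\gamma=\Omega(\speed)$ is itself orthogonal to $\speed$, which is precisely what makes the $g$-terms drop out; no compactness or curvature estimate is needed here, the lemma being a purely algebraic/ODE preparation for the Synge- and Bonnet--Myers-type arguments that follow.
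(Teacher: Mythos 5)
Your proposal is correct and follows essentially the same route as the paper: you choose the same $\tau=\int_0^T\frac{\langle\dot V,\dot\gamma\rangle}{|\dot\gamma|^2}\,\mathrm{d}t$ and the same primitive $g$ (up to the sign convention in writing the integrand) so that the last square in Lemma \ref{lemmasec} vanishes, and you verify, exactly as in \eqref{usa2}, that the $g\,\Omega(\dot\gamma)$ contributions cancel so the remaining square reduces to $|(\dot V)_2-\tfrac12\Omega(V)_2|^2$ while $W_2=V$ gives the curvature term. No gaps.
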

With respect to the same splitting along $\gamma$, define the operator $\Tilde{\Omega}: \gamma^*TM \to \gamma^*TM$ as
\begin{equation}\label{omegatilde}
\tilde{\Omega}(V)=\Omega(V_1)+\Omega(V)_1+\frac{1}{2}\Omega(V_2)_2  .
\end{equation}
Consider the differential problem given by
\begin{equation}\label{parallel}
\dot{V}=\tilde{\Omega}(V)  .
\end{equation}
This is an ordinary linear system of first order differential equations which allows us to define a linear isomorphism $P_{\gamma}:T_{\gamma(0)}M \to  T_{\gamma(0)}M$ as follows  
\begin{equation*}
    P_{\gamma}(v)=V(T)  ,
\end{equation*}
where $V(t)$ is the unique solution of~\eqref{parallel} with initial vector $v$. 
\begin{lemma}\label{omegaortogonale}
The operator $\Tilde{\Omega}$ is antisymmetric with respect to the metric $g$. This in particular implies that the linear isomorphism $P_{\gamma}$ is orthogonal.
\begin{proof}
Let $V$ and $W$ be variations along $\gamma$. Then, by definition~\eqref{omegatilde} of $\tom$
\begin{align*}
 \langle \tom(V),W \rangle &= \langle \Omega(V_1)+\Omega(V)_1+\frac{1}{2}\Omega(V_2)_2 ,W \rangle && \\
&=-\langle V_1,\Omega(W)\rangle- \langle V,  \Omega(W_1) \rangle - \frac{1}{2} \langle V_2, \Omega(W_2) \rangle && \\
&=-\langle V, \Omega(W)_1 \rangle - \langle V, \Omega(W_1) \rangle - \frac{1}{2} \langle V,\Omega(W_2)_2 \rangle && \\ 
&=-\langle V, \tom(W)\rangle  .
\end{align*}
This implies that if $V$ and $W$ are solution of~\eqref{parallel}, then
\begin{equation*}
\frac{d}{\mathrm{d}t} \langle V ,W \rangle = \langle \tom(V),W \rangle + \langle V,\tom(W) \rangle=0  . 
\end{equation*}
By definition of $P_{\gamma}$, the statement holds. 

\end{proof}
\end{lemma}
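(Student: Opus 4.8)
The plan is to reduce the whole statement to the pointwise claim that $\tom$ is $g$-antisymmetric along $\gamma$, and to let the linear ODE \eqref{parallel} do the rest. Concretely, suppose antisymmetry is known and let $V,W$ be solutions of $\dot V=\tom(V)$, $\dot W=\tom(W)$ with $V(0)=v$, $W(0)=w$. Then
\[
\frac{\mathrm{d}}{\mathrm{d}t}\langle V,W\rangle=\langle\tom(V),W\rangle+\langle V,\tom(W)\rangle=0,
\]
so $\langle V(t),W(t)\rangle$ is constant; evaluating at $t=T$ gives $\langle P_\gamma v,P_\gamma w\rangle=\langle V(T),W(T)\rangle=\langle v,w\rangle$, i.e. $P_\gamma$ is orthogonal (and in particular invertible, as already asserted).

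For the antisymmetry of $\tom$ the only input beyond the antisymmetry of $\Omega$ coming from \eqref{compa} is that the two orthogonal projections hidden in the subscripts — the projection onto $\R\speed$ producing $X\mapsto X_1$ and the projection onto $\{\speed\}^{\perp}$ producing $X\mapsto X_2$, both well defined since $|\speed|=\sqrt{2k}>0$ along a zero of $\eta_k$ — are $g$-self-adjoint, so $\langle X_i,Y\rangle=\langle X,Y_i\rangle$ for $i=1,2$ and any vector fields $X,Y$ along $\gamma$. I would then expand $\langle\tom(V),W\rangle$ according to \eqref{omegatilde}: in each of the three summands first slide the projection off the $V$-factor and onto $W$ by self-adjointness, then apply $\langle\Omega(\cdot),\cdot\rangle=-\langle\cdot,\Omega(\cdot)\rangle$, and finally slide the freed projection back onto the corresponding $W$-factor. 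This produces precisely $-\langle V,\Omega(W_1)+\Omega(W)_1+\tfrac12\Omega(W_2)_2\rangle=-\langle V,\tom(W)\rangle$.

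The step requiring care — really the only one — is the bookkeeping forced by the mismatched placement of subscripts in \eqref{omegatilde}: the first summand $\Omega(V_1)$ projects before applying $\Omega$, the second $\Omega(V)_1$ projects afterward, and the third $\tfrac12\Omega(V_2)_2$ does both. One must pair each summand of $\tom(V)$ with the summand of $W$ for which the self-adjointness identity applies cleanly, so that after using antisymmetry of $\Omega$ the surviving projections land on exactly the factors that reassemble $\tom(W)$. Once this pairing is fixed the computation is three lines, and orthogonality of $P_\gamma$ follows immediately as above.
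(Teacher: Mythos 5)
Your proposal is correct and follows essentially the same route as the paper: the paper likewise proves pointwise antisymmetry of $\tom$ by expanding the three summands of \eqref{omegatilde}, using the $g$-antisymmetry of $\Omega$ together with the self-adjointness of the projections onto $\R\speed$ and $\{\speed\}^{\perp}$ (so that the first and second summands swap roles while the third pairs with itself), and then concludes that $\langle V,W\rangle$ is constant along solutions of \eqref{parallel}, hence $P_{\gamma}$ is orthogonal. The only nuance is that for the summand $\Omega(V_1)$ one applies antisymmetry of $\Omega$ before sliding the projection (the projection sits inside $\Omega$), but you flag exactly this bookkeeping and land on the correct reassembly of $\tom(W)$.
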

Thus $P_{\gamma}$ is an orthogonal operator and in this sense the above construction generalizes the notion of Riemannian parallel transport to the magnetic case. We can now proceed to the proof of Theorem C which is an immediate consequence of the next statement.
\begin{lemma} \label{Klinglemma}
Let $(M,g,\sigma)$ be a magnetic system over an oriented even dimensional manifold and $\gamma$ a zero of $\eta_k$. If $\Msec>0$, then $\mathrm{index}(\gamma) \geq 1 $.
\begin{proof}
Assume $\Msec>0$. Consider $G \subset \gamma^*TM$ the orthogonal subbundle with respect to $\speed$ together with its projection map $\mathrm{p}_G: \gamma^*TM \to G$. Because $\tom(\speed)=\Omega(\speed)$, $\speed$ is a periodic solution of~\eqref{parallel} and the map $P_{\gamma}$ leaves invariant $G(0)$. Since $P_{\gamma}$ is orthogonal and since $M$ is oriented, we deduce that $\tilde{P}_{\gamma}=P_{\gamma} \circ \mathrm{p}_G$ is a special orthogonal isomorphism of $G(0)$. By assumption $G(0)$ is odd dimensional so that $1$ is eigenvalue of $\tilde{P}_{\gamma}$ i.e. there exists $v \in G$ such that $\tilde{P}_{\gamma}(v)=v$. By definition of $\tilde{P}_{\gamma}$, this is equal to say that there exists $V$ orthogonal to $\speed$ periodic solution of \eqref{parallel}. Let $V$ be such a solution and observe that $\tom(V)_2=\frac{1}{2}\Omega(V)_2$. Consider $W=V+g\speed$ and $\tau$ as in Lemma \ref{tecnica1}, and observe that 
\begin{align*}
\hess(W,\tau)&= \int_0^T  \Big|(\dot{V})_2-\frac{1}{2} \Omega(V)_2\Big|^2  \mathrm{d}t  -\int_0^T |V|^2\Msec\Big(\frac{\speed}{|\speed|},\frac{V}{|V|}\Big) \mathrm{d}t && \\
&=-\int_0^T |V|^2\Msec\Big(\frac{\speed}{|\speed|},\frac{V}{|V|} \Big) \mathrm{d}t<0 \ .
\end{align*}
Therefore, the index of $\gamma$ is at least one. 

\end{proof}
\end{lemma}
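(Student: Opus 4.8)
The plan is to mimic the classical proof of Synge's theorem, now carried out with the magnetic parallel transport $P_\gamma$ in place of the Riemannian one, using the tools already assembled in this section. First I would take $\gamma\in\Z(\eta_k)$ and consider the restriction of $P_\gamma$ to the orthogonal subbundle $G\subset\gamma^*TM$. The key structural observation is that $\speed$ itself is a periodic solution of the defining equation~\eqref{parallel} — this is immediate from the definition~\eqref{omegatilde} of $\tom$ since $\tom(\speed)=\Omega(\speed)=\ddot\gamma$ along a magnetic geodesic — so $P_\gamma$ preserves the line $\R\speed(0)$ and hence, being orthogonal by Lemma~\ref{omegaortogonale}, also preserves its orthogonal complement $G(0)$.

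Next I would invoke orientability and even-dimensionality: $G(0)$ is odd-dimensional, and $\tilde P_\gamma=P_\gamma\circ\mathrm{p}_G$ is orientation-preserving on $G(0)$ (because $P_\gamma$ is orientation-preserving on the whole even-dimensional $T_{\gamma(0)}M$ and fixes the orientation of the $\speed$-line), i.e.\ $\tilde P_\gamma\in SO(G(0))$. A special orthogonal map on an odd-dimensional space always has $+1$ as an eigenvalue, so there is a nonzero $v\in G(0)$ with $\tilde P_\gamma(v)=v$; propagating $v$ by~\eqref{parallel} yields a periodic vector field $V$ along $\gamma$, everywhere orthogonal to $\speed$. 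The point of solving~\eqref{parallel} rather than the naive parallel transport is precisely that, for such $V$, one has $\tom(V)_2=\tfrac12\Omega(V)_2$, so $(\dot V)_2=\tfrac12\Omega(V)_2$.

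Finally I would feed this $V$ into Lemma~\ref{tecnica1}: choosing the associated $g$ and $\tau$ and setting $W=V+g\speed$, the formula there gives
\[
\hess(W,\tau)=\int_0^T\Big|(\dot V)_2-\tfrac12\Omega(V)_2\Big|^2\,\mathrm{d}t-\int_0^T|V|^2\,\Msec\Big(\tfrac{\speed}{|\speed|},\tfrac{V}{|V|}\Big)\,\mathrm{d}t.
\]
The first integrand vanishes identically by the previous step, and the second is strictly negative since $\Msec>0$ and $V$ is nowhere zero (it is a nonzero solution of a linear ODE). Hence $\hess(W,\tau)<0$, so $\hess$ is negative on a nonzero vector and $\mathrm{index}(\gamma)\geq1$.

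The only genuinely delicate point is the orientation bookkeeping: one must check that $P_\gamma$ is orientation-preserving on $T_{\gamma(0)}M$ (it lies in the image of a path in $O(n)$ starting at the identity, hence in $SO(n)$) and that it respects a consistent orientation of the splitting $\R\speed(0)\oplus G(0)$, so that the induced map on $G(0)$ genuinely lands in $SO$; after that, the eigenvalue-$1$ argument and the plugging-in are routine. Everything else — periodicity of $\speed$ under~\eqref{parallel}, orthogonality of $P_\gamma$, the identity $\tom(V)_2=\tfrac12\Omega(V)_2$ for $\speed$-orthogonal $V$ — is a direct reading of the definitions~\eqref{omegatilde}, \eqref{parallel} and Lemmas~\ref{omegaortogonale} and~\ref{tecnica1}.
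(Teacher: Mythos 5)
Your proposal is correct and follows essentially the same route as the paper: periodicity of $\speed$ under \eqref{parallel}, orthogonality of $P_{\gamma}$ from Lemma \ref{omegaortogonale}, the Synge-type eigenvalue-$1$ argument on the odd-dimensional $G(0)$ to produce a periodic $\speed$-orthogonal solution $V$, and then Lemma \ref{tecnica1} with $(\dot V)_2=\tfrac12\Omega(V)_2$ killing the first term so that $\Msec>0$ forces $\hess(W,\tau)<0$. Your extra remark on the orientation bookkeeping only makes explicit what the paper leaves implicit when it deduces that $\tilde P_{\gamma}$ is special orthogonal.
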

\begin{proof}[Proof of Theorem C] As argued for instance in \cite{Abb2}, if $k>c$, then $\eta_k$ carries a minimizer on each non trivial free homotopy class of loops of $M$. If $M$ is oriented and even dimensional and $\Msec>0$ for $k$ bigger than $c$, then by Lemma \ref{Klinglemma}, $\pi_1(M)$ has to be necessarily trivial. If $M$ is a closed oriented surfaces and the magnetic form is exact, by \cite{CMP}, $\eta_k$ carries a minimizer for every $k$ in the case $M$ is non simply connected and for every $k\in (0,c)$ otherwise. With the same argument we conclude that if $\Msec>0$, then $M=S^2$ and $k\geq c$.

\end{proof}
We end the section by generalizing a Bonnet-Myers theorem to our context. Preliminarily, we establish an explicit formula of $\hess$ when evaluated on vector field obtained by rescaling a solutions of the differential problem \eqref{parallel} with a $T$-periodic function with vanishing boundary conditions. 
\begin{lemma}\label{tecnica2}
Let $V$ be a solution of~\eqref{parallel} of unit norm and orthogonal to $\speed$. Consider $V^f=fV$, where $f:[0,T] \to \R$ with $f(0)=f(T)=0$. There exists a variation $W^f$ and a real number $\tau$ such that 
\begin{equation}
\hess (W^f,\tau)= \int_0^T \Big\{ \dot{f}^2 - f^2 \Msec \Big( \frac{\speed}{|\speed|} , V \Big) \Big\}  \mathrm{d}t \ .
\end{equation}
\begin{proof} Let $V^f$ as in the statement. Then by Lemma \ref{tecnica1}, there exist $g$ and $\tau$ such that, writing $W^f=V^f+g\speed$, it follows that 
\begin{align*}
\hess(W^f,\tau)&=\int_0^T \Big| (\dot{V}^f)_2 - \frac{1}{2} \Omega(V^f)_2  \Big|^2- |V^f|^2 \  \Msec \Big( \frac{\speed}{|\speed|}, V\Big) \mathrm{d}t && \\
&=\int_0^T \Big| \dot{f}V+f(\dot{V})_2 - f\frac{1}{2} \Omega(V)_2  \Big|^2- |fV|^2 \  \Msec \Big( \frac{\speed}{|\speed|}, V \Big) \mathrm{d}t && \\
&=\int_0^T \Big\{ \dot{f}^2-f^2\Msec \Big( \frac{\speed}{|\speed|}, V \Big) \Big\} \mathrm{d}t  .
\end{align*}
\end{proof}
\end{lemma}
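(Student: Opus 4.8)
\textbf{Proof proposal for Lemma \ref{tecnica2}.}

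The plan is to apply Lemma \ref{tecnica1} directly to the vector field $V^f = fV$, which is orthogonal to $\speed$ since $V$ is, and then simplify the resulting integrand using the hypothesis that $V$ solves \eqref{parallel} with unit norm. First I would invoke Lemma \ref{tecnica1} with the variation $V^f$ in place of $V$: this produces a periodic function $g$ and a constant $\tau$, both depending linearly on $V^f$ (hence on $f$ and $V$), such that writing $W^f = V^f + g\speed$ we have
\begin{equation*}
\hess(W^f,\tau) = \int_0^T \Big|(\dot{V}^f)_2 - \tfrac{1}{2}\Omega(V^f)_2\Big|^2\,\mathrm{d}t - \int_0^T |V^f|^2\,\Msec\Big(\frac{\speed}{|\speed|},\frac{V^f}{|V^f|}\Big)\,\mathrm{d}t.
\end{equation*}
Since $|V|=1$ we have $|V^f|^2 = f^2$ and $\frac{V^f}{|V^f|} = V$ (where $f\neq 0$; the zero set contributes nothing to the integral), so the second term is exactly $-\int_0^T f^2\,\Msec(\frac{\speed}{|\speed|},V)\,\mathrm{d}t$, as desired. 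It remains to show the first term equals $\int_0^T \dot{f}^2\,\mathrm{d}t$.

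For the first term I would compute $\dot{V}^f = \dot{f}V + f\dot{V}$ and take the orthogonal-to-$\speed$ component. Since $V$ is orthogonal to $\speed$, so is $\dot{f}V$, hence $(\dot{V}^f)_2 = \dot{f}V + f(\dot{V})_2$. Now I use that $V$ solves \eqref{parallel}, i.e. $\dot{V} = \tilde{\Omega}(V)$; because $V$ is orthogonal to $\speed$ we have $V_1 = 0$ and $V_2 = V$, so by the definition \eqref{omegatilde} of $\tom$, $\dot{V} = \tilde{\Omega}(V) = \Omega(V)_1 + \tfrac{1}{2}\Omega(V)_2$, whence $(\dot{V})_2 = \tfrac{1}{2}\Omega(V)_2 = \tfrac{1}{2}\Omega(V^f)_2 / f$. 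Therefore
\begin{equation*}
(\dot{V}^f)_2 - \tfrac{1}{2}\Omega(V^f)_2 = \dot{f}V + f\cdot\tfrac{1}{2}\Omega(V)_2 - \tfrac{1}{2}f\,\Omega(V)_2 = \dot{f}V,
\end{equation*}
so $\big|(\dot{V}^f)_2 - \tfrac{1}{2}\Omega(V^f)_2\big|^2 = \dot{f}^2|V|^2 = \dot{f}^2$. Substituting this into the expression from Lemma \ref{tecnica1} yields the claimed formula.

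The computation is essentially routine once one unwinds the definitions; the only point requiring care is the identification $(\dot{V})_2 = \tfrac{1}{2}\Omega(V)_2$, which is precisely the observation already made in the proof of Lemma \ref{Klinglemma} (``$\tom(V)_2 = \tfrac{1}{2}\Omega(V)_2$'') and follows because the $\Omega(V_1)$ and $\Omega(V)_1$ terms in $\tom(V)$ either vanish (as $V_1 = 0$) or are tangential (as $\Omega(V)_1$ is by definition). The main — and only — obstacle is bookkeeping with the tangential/orthogonal decomposition and making sure the cross term $f(\dot{V})_2$ exactly cancels half of $\Omega(V^f)_2$; there is no genuine analytic difficulty, and the boundary conditions $f(0)=f(T)=0$ are inherited automatically through Lemma \ref{tecnica1} without further argument.
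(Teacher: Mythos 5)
Your proposal is correct and follows essentially the same route as the paper: apply Lemma \ref{tecnica1} to $V^f=fV$ and then use that $V$ solves \eqref{parallel} orthogonally to $\speed$, so that $(\dot V)_2=\tfrac12\Omega(V)_2$ and the cross terms cancel, leaving $\dot f^2|V|^2=\dot f^2$. You merely spell out explicitly the cancellation that the paper leaves implicit in its last displayed equality (and the harmless sign ambiguity in $V^f/|V^f|$ is absorbed by the evenness of $\Msec$ in its second argument).
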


\begin{lemma} \label{Bonnet-Myers} Let $\gamma=(x,T)$ be a zero of $\eta_k$ with $\mathrm{index}(\gamma)=m$. If $\Mric \geq \frac{1}{r^2}>0$ for a positive constant $r$, then 
\begin{equation*}
T\leq r \pi(m+1)
\end{equation*}

\begin{proof} Let $\{ V_1,...,V_{n-1} \}$ be a family of linearly independent solutions for the differential problem \eqref{parallel}, with unit norm and orthogonal to $\speed$. For $j=0,1,...,m$ and $i=1,...,n-1$ consider $V_i^{f_{j}}=f_{j}V_i $, where
\begin{equation*}
f_j(t)=
\left\{\begin{split}
\sin\Big( \frac{(m+1)\pi t}{T}\Big) \ \ \ , \ t\in \Big[\frac{jT}{m+1},\frac{(j+1)T}{m+1}\Big] \\
0\ \ \ \ \ \ \ \ \ \ \ \ \ \ \  \ \ \ \ \ \ \ \mathrm{otherwise}\ \ \ \ \  \ \ \ \ \ \ \ \ \ \ \ \    \\ 
\end{split}\right. 
\end{equation*}
 
Assume $\Mric \geq \frac{1}{r^2}>0$ and suppose by contradiction that $T>\pi r (m+1)$. Let $W_{i}^{f_j}=V_{i}^{f_j} + g_{ij}\speed$ and $\tau_{ij}$ given by Lemma \ref{tecnica2}. Fix $j$ and observe that the sum

\begin{align*}
\sum_{i=1}^{n-1} \hess (W_{i}^{f_j},\tau_{ij})  &=  \sum_{i=1}^{n-1} \int_{\frac{jT}{m+1}}^{\frac{(j+1)T}{m+1}} \Big\{ \frac{(m+1)^2\pi^2}{T^2} \cos^2 \Big( \frac{(m+1)\pi t}{T} \Big) \Big\} \mathrm{d}t && \\
& \ \ \ \ \ \ \ \ \ \ \ \ \ \ \ \ \ \ \ \ \ \ \ \ \ \ - \sum_{i=1}^{n-1} \int_{\frac{jT}{m+1}}^{\frac{(j+1)T}{m+1}} \Big\{\sin^2 \Big( \frac{(m+1)\pi t}{T}\Big) \Msec \Big(\frac{\speed}{|\speed|},V_i \Big) \Big\} \mathrm{d}t && \\ 
& =(n-1)\Big[ \frac{(m+1)^2\pi^2}{2T(m+1)}-\int_{\frac{jT}{m+1}}^{\frac{(j+1)T}{m+1}} \Big\{ \sin^2 \Big( \frac{(m+1)\pi t}{T} \Big) \Mric \Big( \frac{\speed}{|\speed|} \Big)  \Big\} \mathrm{d}t \Big] && \\
& \leq(n-1)\Big[ \frac{(m+1)^2\pi^2}{2T(m+1)}-\frac{T}{2 r^2 (m+1)}\Big] && \\
& =(n-1)\Big[ \frac{(m+1)^2\pi^2 r^2-T^2}{2T(m+1) r^2}\Big] <0  .
\end{align*}

Therefore, for every $j$ there exists an index $i_{j}\in \{ 0,1,...,n-1 \}$ such that $\hess(W_{i_j}^{f_j},\tau_{i_j j})<0$. By construction, for every $s\neq l$ the supports of $f_{s}$ and $f_{l}$ are disjoint which implies that $ \{ W_{i_j}^{f_j} \} $ is a family of linearly independent vector field along $\gamma$. For real coefficients $\lambda_1,...,\lambda_{m+1}$ define
\begin{equation*}
V=\sum_{j=1}^{m+1} \lambda_j V^{f_j}_{i_j} , \ g= \sum_{j=1}^{m+1} \lambda_j g_{i_j j} \ \mathrm{and} \ \tau= \sum_{j=1}^{m+1} \lambda_j \tau_{i_j j}.
\end{equation*}
By linearity of Equation \eqref{differentialproblem} and Equation \eqref{usa2}, $g$ and $\tau$ are exactly the ones associated to $V$ and $W=V+g\speed$ in Lemma \ref{tecnica1} in order to obtain that
\begin{align}
\hess \Big(\sum_{j=1}^{m+1}(W^{f_j},\tau_{i_j j})\Big)&= \int_0^T  \Big|(\dot{V})_2-\frac{1}{2} \Omega(V)_2\Big|^2  \mathrm{d}t  -\int_0^T |V|^2\Msec\Big(\frac{\speed}{|\speed|},\frac{V}{|V|}\Big) \mathrm{d}t \nonumber \\ 
&=\sum_{j=1}^{m+1} \lambda_j^2  \int_{\frac{jT}{m+1}}^{\frac{(j+1)T}{m+1}} \Big\{ \dot{f}_{ j}^2  - f_j^2 \  \Msec \Big(\frac{\speed}{|\speed|}, V_{i_j} \Big) \Big\} \mathrm{d}t \label{abc1}\\
&= \lambda_j^2 \sum_{j=1}^{m+1} \hess(W^{f_j},\tau_{i_j j})  \leq 0  , \label{abc2}
\end{align}
where in \eqref{abc1} we used again that the support of $f_j$ are disjoint. Observe that the equality in \eqref{abc2} holds if and only if $\lambda_j=0$ for every $j$. Summing up the family $\{ (W_{i_j}^j, \tau_{i_j j}) \}$ is a family of linear independent vectors of $T_{\gamma}\M$ which generate a vector subspace $Z$, of dimension $m+1$, such that $\hess|_{_Z}$ is negative definite. This implies that the index of $\gamma$ is equal or greater than $m+1$ in contradiction with the assumption. 

\end{proof}
\end{lemma}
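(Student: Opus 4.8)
The plan is to transport the classical Bonnet--Myers argument into the magnetic index form. By Lemma~\ref{tecnica2}, along a variation of the shape $V^{f}=fV$, with $V$ a unit solution of the magnetic-parallel system~\eqref{parallel} orthogonal to $\speed$, the Hessian collapses to the scalar Jacobi quadratic form $\int_0^T\{\dot f^2-f^2\,\Msec(\speed/|\speed|,V)\}\,\mathrm{d}t$; summing such expressions over an orthonormal magnetic-parallel frame turns the sectional term into the Ricci term, which is exactly where the hypothesis $\Mric\ge 1/r^2$ gets used.

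First I would argue by contradiction, assuming $T>\pi r(m+1)$. I split $[0,T]$ into the $m+1$ consecutive intervals $I_j=[jT/(m+1),(j+1)T/(m+1)]$, $j=0,\dots,m$, and on each $I_j$ I use the bump $f_j(t)=\sin\big((m+1)\pi t/T\big)$, extended by $0$ outside $I_j$; these are continuous, piecewise smooth, vanish at the endpoints of $I_j$, and have pairwise disjoint supports. I then pick solutions $V_1,\dots,V_{n-1}$ of~\eqref{parallel} which at $t=0$ form an orthonormal basis of $\speed(0)^{\perp}$; since $\speed$ itself solves~\eqref{parallel} and, by Lemma~\ref{omegaortogonale}, the associated flow is orthogonal, the $V_i(t)$ remain of unit norm, orthogonal to $\speed(t)$, and span $\speed(t)^{\perp}$ for all $t$, so each $f_jV_i$ is an admissible variation for Lemma~\ref{tecnica2}.

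For every pair $(i,j)$, Lemma~\ref{tecnica2} produces $W_i^{f_j}=f_jV_i+g_{ij}\speed$ and $\tau_{ij}\in\R$ with $\hess(W_i^{f_j},\tau_{ij})=\int_0^T\{\dot f_j^2-f_j^2\,\Msec(\speed/|\speed|,V_i)\}\,\mathrm{d}t$. Summing over $i=1,\dots,n-1$ and using $\sum_i\Msec(v,V_i)=\Mric(v)$ coming from~\eqref{dmricc}, together with the elementary values of $\int_{I_j}\cos^2$ and $\int_{I_j}\sin^2$ and the bound $\Mric\ge 1/r^2$, one gets $\sum_{i=1}^{n-1}\hess(W_i^{f_j},\tau_{ij})\le(n-1)\,\frac{\pi^2r^2(m+1)^2-T^2}{2T(m+1)r^2}<0$, the strict sign coming from $T>\pi r(m+1)$. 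Hence for every $j$ there is some index $i_j$ with $\hess(W_{i_j}^{f_j},\tau_{i_jj})<0$. It then remains to assemble these into an $(m+1)$-dimensional negative subspace: for coefficients $\lambda_0,\dots,\lambda_m$ set $V=\sum_j\lambda_j f_jV_{i_j}$; since the boundary value problem~\eqref{differentialproblem} defining $g$ and the algebraic identity~\eqref{usa2} are both linear in the variation, the $g$ and $\tau$ that Lemma~\ref{tecnica1} attaches to $V$ are $\sum_j\lambda_j g_{i_jj}$ and $\sum_j\lambda_j\tau_{i_jj}$, so $W:=V+g\speed=\sum_j\lambda_j W^{f_j}_{i_j}$; because the $f_j$ have disjoint supports, the integral formula of Lemma~\ref{lemmasec} applied to $W$ decouples into $\hess(W,\tau)=\sum_{j=1}^{m+1}\lambda_j^2\,\hess(W^{f_j}_{i_j},\tau_{i_jj})\le 0$, with equality only when all $\lambda_j=0$. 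As the $W^{f_j}_{i_j}$ are linearly independent (disjoint supports again), $\hess$ is negative definite on a subspace of $T_\gamma\M$ of dimension $m+1$, forcing $\mathrm{index}(\gamma)\ge m+1$ and contradicting the hypothesis; therefore $T\le\pi r(m+1)$.

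The step I expect to be the genuine obstacle is the last one: checking that fusing the individual $W^{f_j}_{i_j}$ into a single variation generates \emph{no} interaction terms in $\hess$. This rests precisely on the linearity of~\eqref{differentialproblem} and~\eqref{usa2} together with the disjointness of the supports of the $f_j$, after which the quadratic form of Lemma~\ref{lemmasec} (equivalently Lemma~\ref{tecnica1}) splits over $j$. Everything else — the choice of sine bumps, the orthonormality of the magnetic-parallel frame supplied by Lemma~\ref{omegaortogonale}, and the curvature estimate — is the familiar Bonnet--Myers computation carried through the magnetic parallel transport.
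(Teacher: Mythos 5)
Your proposal is correct and follows essentially the same route as the paper: the same sine bumps on the $m{+}1$ subintervals, Lemma \ref{tecnica2} to reduce the Hessian to the scalar Jacobi form, summation over a magnetic-parallel orthonormal frame to pass from $\Msec$ to $\Mric$, and the assembly of the $W_{i_j}^{f_j}$ into an $(m{+}1)$-dimensional negative subspace via disjoint supports and the linearity of \eqref{differentialproblem} and \eqref{usa2}. Your extra remark that the frame stays orthonormal and orthogonal to $\speed$ because $\speed$ solves \eqref{parallel} and the transport is orthogonal (Lemma \ref{omegaortogonale}) is exactly the justification the paper leaves implicit.
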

Observe that the best constant one can obtain from Lemma \ref{Bonnet-Myers} is in fact optimal. This is confirmed for instance by the example the round 2-dimensional sphere with $\sigma =0 $ or the flat 2-dimensional torus with the volume form as magnetic form. Indeed, by fixing for simplicity $k=\frac{1}{2}$, an easy computation shows that in both case the Ricci magnetic curvature is constantly equal to $1$, all contractible closed magnetic geodesic with period $2\pi$ are zeroes of $\eta_k$ of index 1.

\section{Index estimates below $\man$ and proof of Theorem A (and Theorem A1)}\label{proofA}

\subsection{Weakly exact case} \label{weaklyexact}
\subsubsection{Minimax geometry below $\man$.}\label{minimaxdebole}
In this paragraph we assume $\sigma$ to be weakly exact. Let $\prim$ be the primitive of $\eta_k$ defined on $\M_0$ as given in in~\eqref{primcon}. Below the critical value, $\prim$ enjoys a minimax geometry on the set of loops with short length which we discuss now. Denote by $M^+=M\times (0,+\infty)$ the set of constant loops with free period and consider 

\begin{equation*}
    \Gamma^k_0=\{ \phi:[0,1]\to \M_0, \ \phi(0) \in M^+ \ \mathrm{and} \ S_k^{\sigma}(\phi(1))<0 \}  .
\end{equation*}
By definition \eqref{mane} of $c$, when $k<c$ the set $\Gamma^k_0$ is non empty and the minimax value function is defined by
 
 \begin{equation}
       \mathbf{s}:(0,c) \to \R   , \ \mathbf{s}(k)=\inf_{\Gamma^k_0}\max_{[0,1]} \prim(\phi(t))  . 
    \end{equation}
     
\begin{lemma}\label{minimaxlemma}
    Let $I$ be an open interval with compact closure fully contained in $(0,c)$. Then there exists a positive $\varepsilon=\varepsilon(I)$ such that
    \begin{equation}
        \mathbf{s}(k)\geq \varepsilon   , \ \forall k\in I .
    \end{equation}
    \begin{proof}
    For the proof we refer the reader to \cite[Section~5]{AB}.
    
        \end{proof}
\end{lemma}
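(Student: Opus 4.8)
The plan is to produce a uniform ``mountain pass'' at a fixed small length. The elementary starting point is that for every $\gamma=(x,T)\in\M_0$ one has, by Cauchy--Schwarz, $\ell(\gamma)^2:=\big(\int_0^T|\speed|\,\mathrm{d}t\big)^2\le 2T\,E_{\mathrm{kin}}(\gamma)$ with $E_{\mathrm{kin}}(\gamma)=\tfrac12\int_0^T|\speed|^2\,\mathrm{d}t$; minimizing $E_{\mathrm{kin}}(\gamma)+kT\ge \ell(\gamma)^2/(2T)+kT$ over $T>0$ gives the bound $E_{\mathrm{kin}}(\gamma)+kT\ge\sqrt{2k}\,\ell(\gamma)$. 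Note that $\ell$ depends only on $x$ (not on $T$) and is continuous on $\M_0$ since $x\mapsto\dot x\in L^2$ is continuous and $\|\cdot\|_{L^1}\le\|\cdot\|_{L^2}$.

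The key geometric input is that short loops admit efficient cappings: there exist $\ell_0>0$ (a fixed fraction of the injectivity radius of $(M,g)$) and a constant $C=C(M,g,\sigma)>0$ such that every $\gamma$ with $\ell(\gamma)\le\ell_0$ bounds a disk $D_\gamma\colon B^2\to M$ with $\mathrm{Area}(D_\gamma)\le C'\ell(\gamma)^2$. Since $\sigma$ is weakly exact, the quantity $\int_{B^2}D_\gamma^*\sigma$ appearing in \eqref{primcon} does not depend on the chosen capping, so we may evaluate it on this small disk and estimate $\big|\int_{B^2}D_\gamma^*\sigma\big|\le\|\sigma\|_\infty\,\mathrm{Area}(D_\gamma)\le C\ell(\gamma)^2$. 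Combining the two estimates, for every $k>0$ and every $\gamma\in\M_0$ with $\ell(\gamma)\le\ell_0$,
\[
\prim(\gamma)\ \ge\ \sqrt{2k}\,\ell(\gamma)-C\ell(\gamma)^2\ =\ \ell(\gamma)\big(\sqrt{2k}-C\ell(\gamma)\big).
\]

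Now set $k_-=\min\overline{I}>0$ and $\ell_1=\min\{\ell_0,\ \sqrt{2k_-}/C\}$. From the displayed inequality: (a) any $\gamma$ with $\ell(\gamma)<\ell_1$ has $\prim(\gamma)>0$, so the endpoint $\phi(1)$ of any $\phi\in\Gamma_0^k$ (which satisfies $\prim(\phi(1))<0$) must have $\ell(\phi(1))\ge\ell_1$; (b) any $\gamma$ with $\ell(\gamma)=\ell_1/2$ has $\prim(\gamma)\ge(\ell_1/2)\big(\sqrt{2k}-C\ell_1/2\big)\ge(\ell_1/2)(\sqrt{2k_-}/2)=:\varepsilon>0$, using $C\ell_1/2\le\sqrt{2k_-}/2\le\sqrt{2k}/2$. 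Since $\phi(0)\in M^+$ is constant, $\ell(\phi(0))=0$, and $\ell\circ\phi$ is continuous, the intermediate value theorem yields $t^\ast\in(0,1)$ with $\ell(\phi(t^\ast))=\ell_1/2$; hence $\max_{[0,1]}\prim(\phi(t))\ge\prim(\phi(t^\ast))\ge\varepsilon$, and taking the infimum over $\phi\in\Gamma_0^k$ gives $\mathbf{s}(k)\ge\varepsilon$ for all $k\in I$, with $\varepsilon$ depending only on $k_-$ and on $(M,g,\sigma)$.

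The main obstacle is the small-capping estimate $\mathrm{Area}(D_\gamma)\lesssim\ell(\gamma)^2$ with a constant uniform in $\gamma$: one works in a geodesic normal chart around a point of $\gamma$ (legitimate once $\ell(\gamma)$ is below the injectivity radius), fills $\gamma$ by, say, the geodesic cone from a point of the loop or by the straight-line homotopy in the chart, and controls the area using two-sided curvature bounds on the compact manifold $M$. Everything else is elementary; the one point to verify carefully is that $k$ enters only through $k_-=\inf I$, so that the resulting $\varepsilon$ is genuinely uniform over $I$.
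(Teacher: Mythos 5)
Your argument is correct and self-contained: the lower bound $\prim(\gamma)\ge\sqrt{2k}\,\ell(\gamma)-C\ell(\gamma)^2$ for short loops (kinetic-plus-period term bounded below by $\sqrt{2k}\,\ell$ via Cauchy--Schwarz and AM--GM, magnetic flux bounded by a quadratic isoperimetric filling estimate, legitimate since weak exactness makes the capping term independent of the disk), combined with the intermediate value theorem applied to the length along any $\phi\in\Gamma_0^k$, is exactly the standard mechanism behind this lemma. The paper itself gives no proof and simply refers to \cite[Section~5]{AB}, where the argument runs along essentially the same lines (quadratic control of the flux through small loops against the linear-in-length lower bound, uniform in $k\ge\inf I$), so your proposal faithfully reconstructs the cited proof; the only ingredient left implicit, the area bound $\mathrm{Area}(D_\gamma)\lesssim\ell(\gamma)^2$ for loops below the injectivity radius, is standard and holds on any closed Riemannian manifold.
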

Consider on $\M_0$ the negative gradient vector field $-\nabla \prim$, induced by the Riemannian metric $g_{\M}$. Since $\prim$ is smooth, $-\nabla \prim$ admits a local flow for positive time which in general is not complete. In fact, source of non completeness mainly originates from the non completeness of $(0,+\infty)$ and the fact that $|\nabla \prim|_{\M}$ is not bounded on $\M_0$. One can avoid these situations by considering the pseudo gradient given by
\begin{equation}\label{pseudow}
    \pgr = (h \circ \prim) \frac{\nabla \prim}{\sqrt{1+|\nabla \prim|_{\M}^2}}  ,
\end{equation}
where $h:\R \to [0,1]$ is a cut-off function such that $h^{-1}\{ 0 \}=(-\infty , \frac{s(k)}{4}]$ and $h^{-1}\{ 1 \}=[\frac{s(k)}{2}, +\infty)$. As argued in \cite[Section~8]{Abb2}, because $\pgr$ has bounded norm and vanishes on the set $\{ \prim < \frac{s(k)}{4} \}$, it admits a positively complete flow $\flgr: \M_0 \times [0,+\infty) \to \M_0$. The next lemma evidences two important properties of $\flgr$.

\begin{lemma}\label{gradientlemma}
    Let $u=(x,T): \rp \to \M_0$ be an integral curve of $\flgr$. Then for every $s \in \rp$ the following statements hold:
   \begin{itemize}
   \item [(i)] $\prim(u(s)) \leq \prim(u(0))$,
   \item [(ii)] $|T(s)-T(0)|^2 \leq s\Big( \prim (u(0)) - \prim (u(s))\Big)$.
   \end{itemize}
   \begin{proof} 
   For the proof we refer the reader to \cite[Section~5]{Merry}. 
   
   \end{proof}
\end{lemma}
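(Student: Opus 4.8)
The plan is the standard gradient-flow estimate, in the form used for the free-period action functional. Let $u=(x,T)$ be an integral curve of $\flgr$; since $\flgr$ is the flow of the (negative) pseudo-gradient \eqref{pseudow}, we have $u'(s)=-\pgr(u(s))$. For (i), I would simply apply the chain rule: using $\mathrm{d}\prim(\nabla\prim)=|\nabla\prim|_{\M}^{2}$ and $0\le h\le 1$,
\[
\frac{\mathrm{d}}{\mathrm{d}s}\,\prim(u(s)) \;=\; \mathrm{d}_{u(s)}\prim\big(u'(s)\big) \;=\; -\,(h\circ\prim)(u(s))\;\frac{|\nabla\prim|_{\M}^{2}}{\sqrt{\,1+|\nabla\prim|_{\M}^{2}\,}}\bigg|_{u(s)} \;\le\; 0 ,
\]
and integrating over $[0,s]$ gives (i).

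For (ii) the key is the pointwise inequality $\big(\tfrac{\mathrm{d}}{\mathrm{d}s}T(s)\big)^{2}\le-\tfrac{\mathrm{d}}{\mathrm{d}s}\prim(u(s))$. To obtain it, note that in the splitting $T\M=T\Lambda\oplus\R\frac{\partial}{\partial T}$ with $g_{\M}=g_{\Lambda}+\mathrm{d}T^{2}$ the metric gradient of the period function is $\frac{\partial}{\partial T}$, so $\mathrm{d}T(W)=g_{\M}\big(W,\frac{\partial}{\partial T}\big)$ for every $W$ and $g_{\M}\big(\frac{\partial}{\partial T},\frac{\partial}{\partial T}\big)=1$; hence $|\mathrm{d}T(\nabla\prim)|\le|\nabla\prim|_{\M}$ by Cauchy--Schwarz. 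Using this, together with $(h\circ\prim)^{2}\le h\circ\prim$ (since $0\le h\le 1$) and $\tfrac{1}{1+t}\le\tfrac{1}{\sqrt{1+t}}$ for $t\ge 0$, one gets
\[
\Big(\frac{\mathrm{d}}{\mathrm{d}s}T(s)\Big)^{2} \;=\; \big|\mathrm{d}T(\pgr(u(s)))\big|^{2} \;\le\; (h\circ\prim)^{2}(u(s))\;\frac{|\nabla\prim|_{\M}^{2}}{1+|\nabla\prim|_{\M}^{2}}\bigg|_{u(s)} \;\le\; -\,\frac{\mathrm{d}}{\mathrm{d}s}\prim(u(s)) .
\]

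Finally I would integrate this using Cauchy--Schwarz in the time variable:
\[
|T(s)-T(0)|^{2} \;=\; \Big(\int_{0}^{s}\frac{\mathrm{d}}{\mathrm{d}r}T(r)\,\mathrm{d}r\Big)^{2} \;\le\; s\int_{0}^{s}\Big(\frac{\mathrm{d}}{\mathrm{d}r}T(r)\Big)^{2}\mathrm{d}r \;\le\; s\int_{0}^{s}\Big(-\frac{\mathrm{d}}{\mathrm{d}r}\prim(u(r))\Big)\mathrm{d}r \;=\; s\big(\prim(u(0))-\prim(u(s))\big) ,
\]
which is (ii). I do not expect any real obstacle: this is the magnetic analogue of the classical estimate for the negative gradient flow of the free-period action functional, and the bookkeeping is light — $\flgr$ is only positively complete, but that is harmless since both claims concern $s\in\rp$, and the cut-off factor $h\circ\prim$ can only improve the estimates, which is exactly the content of $(h\circ\prim)^{2}\le h\circ\prim$.
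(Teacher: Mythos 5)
Your proof is correct and is exactly the standard argument that the paper outsources to the cited reference (Merry, Section 5): the chain-rule computation for (i), and for (ii) the pointwise bound $\big(\tfrac{\mathrm{d}}{\mathrm{d}s}T\big)^{2}\le -\tfrac{\mathrm{d}}{\mathrm{d}s}\prim(u(s))$ coming from $|\mathrm{d}T(\nabla\prim)|\le|\nabla\prim|_{\M}$, $0\le h\le 1$ and $\tfrac{1}{1+t}\le\tfrac{1}{\sqrt{1+t}}$, followed by Cauchy--Schwarz in the time variable. No gaps; your reading that $\flgr$ flows along the negative of \eqref{pseudow} is the intended convention, consistent with $\prim$ decreasing along flow lines.
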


\begin{lemma}
The set $\Gamma_0^k$ is $\flgr$-invariant. 
\end{lemma}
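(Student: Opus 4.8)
The plan is to show that the set $\Gamma_0^k$ of minimax paths is preserved under the pseudo-gradient flow $\flgr$, by checking that neither endpoint condition defining $\Gamma_0^k$ can be destroyed as we flow. Concretely, given $\phi \in \Gamma_0^k$ and $s\geq 0$, I would set $\phi_s(t)=\flgr(\phi(t),s)$ and verify that $\phi_s(0)\in M^+$ and $\prim(\phi_s(1))<0$.

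First I would handle the endpoint $t=1$. Since $\phi\in\Gamma_0^k$ we have $\prim(\phi(1))<0$. By Lemma \ref{gradientlemma}(i), the value of $\prim$ is non-increasing along integral curves of $\flgr$, so $\prim(\phi_s(1))=\prim(\flgr(\phi(1),s))\leq \prim(\phi(1))<0$. This immediately gives $S_k^{\sigma}(\phi_s(1))<0$, so the second endpoint condition persists.

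Next I would handle the endpoint $t=0$, namely that constant loops stay constant. Here the key observation is that $\pgr$, defined in~\eqref{pseudow}, vanishes on the sublevel set $\{\prim < \frac{s(k)}{4}\}$ because of the cut-off function $h$. Now for a constant loop $(p,T)\in M^+$ one computes $A_k(p,T)=kT>0$ but, more to the point, the relevant fact is that on $M^+$ the primitive $\prim$ takes a controlled value; indeed, since constant loops are critical points for the capping-disk term and $\prim(p,T)=kT$ can be made arbitrarily small by shrinking $T$ — but a path $\phi\in\Gamma_0^k$ starts at a \emph{specific} constant loop. The cleanest route is: either $\phi(0)$ already lies in the region where $\pgr\equiv 0$, in which case $\flgr(\phi(0),s)=\phi(0)\in M^+$ for all $s$; or one argues that $M^+$ is itself $\flgr$-invariant since constant loops are zeros of $\eta_k$ (hence of $d\prim$), so $\nabla\prim$ vanishes there and $\flgr$ fixes $M^+$ pointwise. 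I would use this second, cleaner argument: $M^+ \subseteq \Z(\eta_k)$, and on $\Z(\eta_k)$ the gradient $\nabla \prim$ vanishes, hence $\pgr$ vanishes, hence $\flgr(\cdot,s)$ is the identity on $M^+$. Therefore $\phi_s(0)=\phi(0)\in M^+$.

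The main obstacle — really the only subtlety — is making sure $\phi_s$ is still a continuous path $[0,1]\to\M_0$ and that it genuinely lands back in $\Gamma_0^k$ with the \emph{same} structure, i.e. that applying the flow does not push the path out of the connected component $\M_0$ or break continuity at the endpoints. Continuity is immediate from continuity of the flow $\flgr$ in both variables, and $\flgr$ preserves connected components since it is a flow of a continuous vector field on $\M_0$. So the argument is short: combine Lemma \ref{gradientlemma}(i) for the $t=1$ endpoint with the vanishing of $\pgr$ on $\Z(\eta_k)\supseteq M^+$ for the $t=0$ endpoint, and continuity of $\flgr$ for the rest. I expect the write-up to be only a few lines.
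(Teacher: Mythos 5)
Your treatment of the endpoint $t=1$ is correct and is exactly what the paper does: by Lemma \ref{gradientlemma}(i) the value of $\prim$ is non-increasing along flow lines, so $\prim(\flgr(s,\phi(1)))\leq\prim(\phi(1))<0$. The problem is your argument for the endpoint $t=0$. The claim $M^+\subseteq \Z(\eta_k)$ is false: because the period is free, at a constant loop $(p,T)$ one has $\eta_k(p,T)(0,\tau)=\frac{\tau}{T}\int_0^T\big(k-\tfrac{|\speed|^2}{2}\big)\,\mathrm{d}t=\tau k\neq 0$ for $k>0$ (equivalently, zeroes of $\eta_k$ are closed magnetic geodesics with $|\speed|^2=2k>0$, so they are never constant). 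Hence $\nabla\prim$ does \emph{not} vanish on $M^+$, the pseudo-gradient $\pgr$ is not zero there in general, and $\flgr$ is not the identity on $M^+$. Your fallback option is no better: $\prim(p,T)=kT$, which need not lie below the cutoff threshold $\tfrac{\mathbf{s}(k)}{4}$, so you cannot assume $\phi(0)$ sits in the region where $\pgr\equiv 0$.

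The correct reason $M^+$ is preserved, and the one the paper uses, is that on $M^+$ the gradient is purely in the period direction: $\nabla\prim(p,T)=k\frac{\partial}{\partial T}$. Since $\pgr$ is a scalar multiple of $\nabla\prim$, the flow starting at a constant loop only changes its period while the loop stays constant, i.e. $\flgr(s,M^+)\subseteq M^+$ for all $s\geq 0$. So $M^+$ is invariant not because its points are fixed by the flow, but because the flow moves them tangentially to $M^+$. With that substitution your argument goes through; the continuity considerations you raise are indeed immediate.
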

\begin{proof}
Since for every $(p,T) \in M^+$, $\nabla \prim (p,T) = k\frac{\partial}{\partial T}$ then $\flgr( t  , M^+) \subseteq M^+$ for every positive time $t$. This fact together with point (i) of Lemma \ref{gradientlemma} implies that $\Gamma_0^k$ is $\flgr$-invariant. 

\end{proof}
\subsubsection{Existence almost everywhere and index estimates.}
Hereafter we fix an interval $I$ with compact closure fully contained in $(0,c)$. Because $\prim$ is monotone increasing with respect to the parameter $k$, one can easily deduce that also $\mathbf{s}$ is monotone increasing. Thus, there exists $J\subset I$ of full Lebesgue measure such that $\mathbf{s}$ is differentiable on $J$. Let us point out that the almost every where differentiability of $\mathbf{s}$ plays a central role in the Struwe monotonicity argument. Indeed, this technic allows to recover converging Palais-Smale sequence related to the minimax geometry pointed out in Section \ref{minimaxdebole} just for $k\in J$ \cite[Section~8]{Abb2},\cite[Section~5]{Merry}. In general no information can be deducted when $k$ is not a point of differentiability. Here we retrace a more precise version of this statement. Our goal is to prove in Lemma \ref{index1}, that vanishing points of $\eta_k$ coming from this minimax geometry have Morse index bounded by one. Similar constructions were previously considered in \cite{AMP} and \cite{AAMP}. We proceed by showing that at the values of the energy where $\mathbf{s}$ is differentiable we can bound the period of points in the image of elements of $\Gamma_0^k$ which almost realize the minimax value. 
\begin{lemma}\label{periodolimitato}
Let $k\in J$. Then there exists a constant $A$ such that for every small $\varepsilon>0$, by writing $k_{\varepsilon}=k+\varepsilon$, if $\phi \in \Gamma_0^{k}$ and $\max S^{\sigma}_{k_\varepsilon}(\phi) \leq \mathbf{s}(k_{\varepsilon}) + \varepsilon$, then for every $t\in[0,1]$ satisfying also $\prim(\phi(t))> \mathbf{s}(k)- \varepsilon$, the inequality $T_{\phi(t)} \leq A+2$ holds. 
\begin{proof}
By the differentiability of $\mathbf{s}$ at $k$, there exists a positive constant $A=A(k)$ such that for every positive $\varepsilon$,
    \begin{equation}\label{costanteA}
    |\mathbf{s}(k_{\varepsilon})-\mathbf{s}(k)|\leq A \cdot |k_{\varepsilon}-k|  .
    \end{equation} 
Let $\phi \in \Gamma_0^k$ and $t \in [0,1]$ be as in the statement. It holds that
 \begin{equation*}
     T_{\phi(t)}=\frac{S^{\sigma}_{k_\varepsilon} (\phi(t))-\prim (\phi(t))}{k_{\varepsilon}-k}\leq \frac{\mathbf{s}(k_\varepsilon)+\varepsilon-\mathbf{s}(k)+\varepsilon}{\varepsilon} \leq  A+2  .
 \end{equation*}

\end{proof}
\end{lemma}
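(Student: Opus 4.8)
The plan is to exploit that the primitive $\prim$ of \eqref{primcon} depends \emph{affinely} on the energy parameter, with slope exactly the period, and to feed this into the one-sided Lipschitz bound for $\mathbf{s}$ coming from its differentiability at $k$. The first ingredient is an elementary identity: the only $k$-dependent term in $S_k^{\sigma}(\gamma)$ is $\int_0^T k\,\mathrm{d}t=kT_\gamma$, so for every $\gamma\in\M_0$ and every $\varepsilon>0$,
\[ S_{k_\varepsilon}^{\sigma}(\gamma)-S_k^{\sigma}(\gamma)=\varepsilon\,T_\gamma,\qquad\text{i.e.}\qquad T_\gamma=\frac{S_{k_\varepsilon}^{\sigma}(\gamma)-S_k^{\sigma}(\gamma)}{\varepsilon}. \]
The second ingredient is that, since $k\in J$ is a point of differentiability of the monotone function $\mathbf{s}$, there is a constant $A=A(k)>0$ (for instance $A=|\mathbf{s}'(k)|+1$) with $0\le\mathbf{s}(k_\varepsilon)-\mathbf{s}(k)\le A\varepsilon$ for all $\varepsilon>0$ small enough; this is just the local Lipschitz estimate at $k$ implied by differentiability, and note that $A$ depends neither on $\varepsilon$ nor on $\phi$.

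Next I would combine the two. Given $\phi\in\Gamma_0^k$ with $\max_{[0,1]}S_{k_\varepsilon}^{\sigma}(\phi)\le\mathbf{s}(k_\varepsilon)+\varepsilon$ and $t\in[0,1]$ with $\prim(\phi(t))>\mathbf{s}(k)-\varepsilon$, I apply the identity to $\gamma=\phi(t)$: using $S_{k_\varepsilon}^{\sigma}(\phi(t))\le\max_{[0,1]}S_{k_\varepsilon}^{\sigma}(\phi)\le\mathbf{s}(k_\varepsilon)+\varepsilon$ to bound the positive term in the numerator, the hypothesis $S_k^{\sigma}(\phi(t))>\mathbf{s}(k)-\varepsilon$ to bound the subtracted term, and then the Lipschitz estimate, one obtains
\[ T_{\phi(t)}=\frac{S_{k_\varepsilon}^{\sigma}(\phi(t))-S_k^{\sigma}(\phi(t))}{\varepsilon}\le\frac{\mathbf{s}(k_\varepsilon)+\varepsilon-\mathbf{s}(k)+\varepsilon}{\varepsilon}\le A+2, \]
which is the claim.

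There is no genuine analytic obstacle here: the whole content is the affine dependence of $\prim$ on $k$ together with the pointwise Lipschitz property of $\mathbf{s}$ at a differentiability point. The two things to be slightly careful with are pure bookkeeping: making the ``$\varepsilon$ small enough'' quantifier explicit — it is dictated solely by differentiability of $\mathbf{s}$ at $k$, with no appeal to any compactness — and emphasising that the bound $A+2$ is independent of both $\varepsilon$ and $\phi$, since it is exactly this uniformity that is needed when the lemma is invoked inside the index estimate of Lemma \ref{index1}.
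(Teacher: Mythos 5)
Your proposal is correct and follows essentially the same route as the paper: the affine dependence of $\prim$ on $k$ gives $T_{\phi(t)}=\big(S^{\sigma}_{k_\varepsilon}(\phi(t))-\prim(\phi(t))\big)/\varepsilon$, and the Lipschitz bound $|\mathbf{s}(k_\varepsilon)-\mathbf{s}(k)|\leq A\varepsilon$ from differentiability at $k$ yields the estimate $T_{\phi(t)}\leq A+2$ exactly as in the paper's proof. Your added remarks on the uniformity of $A$ in $\varepsilon$ and $\phi$ are just explicit bookkeeping of what the paper leaves implicit.
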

Let $T^*$ such that  
\begin{equation}\label{tstar}
T^*> \sqrt{2\mathbf{s}(k)}+3(A+1),
\end{equation}
for which the role is clarified later. Consider the set 
\begin{equation*}
\B_k= \{ \prim \geq \frac{\mathbf{s}(k)}{2} \} \cap \{ T < T^* \} .
\end{equation*}
Observe that, by Lemma \ref{minimaxlemma}, $\mathbf{s}(k)$ is positive and as consequence of \cite[Proposition~5.8]{Merry}, $\prim$ restricted to $\B_k$ satisfies Palais-Smale. In particular, the set $\C_k = \mathrm{Crit}(\prim) \cap \B_k = \Z(\eta_k) \cap \B_k$ is compact. The next statement shows that $\C_k$ is non empty and that we can always find an element of $\Gamma_0^k$ which passes arbitrarily close to $\C_k$. 

\begin{lemma}\label{struwe1}
    Let $k\in J$. For every $V$ open neighborhood of $\C_k$ and for every small $\varepsilon>0$ there exists an element $\varphi_{\varepsilon} \in \Gamma_0^k$ such that
    \begin{equation*}
        \varphi_{\varepsilon} ([0,1])\subset \{ \prim < \mathbf{s}(k) \} \cup \Big(\{ \mathbf{s}(k)\leq \prim < \mathbf{s}(k)+\varepsilon \} \cap V \Big)  .
    \end{equation*}
    In particular, $\C_k$ is non empty.
    \begin{proof} Let $k$ be a point of differentiability for $\mathbf{s}$ and let $V$ be an open neighborhood of $\C_k$. Observe that $\C_k$ consists of fixed points for $\flgr$ and $\prim$ satisfies Palais-Smale on $\B_k$ so that there exists an open set $V^{'}$ and a positive $\delta=\delta(V^{'})$ such that $V^{'}$ still contains $\C_k$, $\flgr([0,1] \times V^{'}) \subset V \cap \B_k$ and
  \begin{equation}\label{boundgradient}
            |\nabla \prim|_{\M} \geq \delta >0 \ \  \mathrm{on} \ \ \forall (x,T) \in \B_k \setminus V^{'}  . 
        \end{equation}
Let $\varepsilon$ be such that $\varepsilon < \delta^2$. By definition of $\mathbf{s}$, for every $\tilde{\varepsilon} \in (0, \varepsilon)$ there exists $ \phi_{\tilde{\varepsilon}} \in \Gamma_0^{k_{\tilde{\varepsilon}}} $ such that 
\begin{equation*}
\max_{t \in [0,1]} S^{\sigma}_{k_{\tilde{\varepsilon}}}( \phi_{\tilde{\varepsilon}}(t)) \leq \mathbf{s}(k_{\tilde{\varepsilon}}) + \tilde{\varepsilon} . 
\end{equation*}
Up to shrinking $\tilde{\varepsilon}$, we can assume that $\tilde{\varepsilon}(A+1)<\varepsilon$ and that $\phi_{\tilde{\varepsilon}} \in \Gamma_0^k$. By \eqref{costanteA} in Lemma \ref{periodolimitato}, we deduce that
\begin{equation*}
 \prim(\phi_{\tilde{\varepsilon}}(t)) < S^{\sigma}_{k_{\tilde{\varepsilon}}}( \phi_{\tilde{\varepsilon}}(t)) \leq \mathbf{s}(k_{\tilde{\varepsilon}}) + \tilde{\varepsilon} - \mathbf{s}(k) + \mathbf{s}(k) \leq \mathbf{s}(k)+(A+1)\tilde{\varepsilon}<\mathbf{s}(k) + \varepsilon \ .
\end{equation*}
We claim that the element $\varphi_{\varepsilon} \in \Gamma_0^k$ defined by
\begin{equation}\label{element}
\varphi_{\varepsilon} = \flgr(1,\phi_{\tilde{\varepsilon}}(t)) , 
\end{equation}
is the desired one. In fact, because $\prim$ decreases along the flow lines of $\flgr$, if $t \in [0,1]$ is such that $\prim(\phi_{\tilde{\varepsilon}}(t))< s(k)$ then $\prim(\varphi_{\varepsilon} (t))< s(k)$. By the choice of $T^*$ in \eqref{tstar}, by Lemma \ref{periodolimitato} and by Lemma \ref{gradientlemma}, if $\prim (\phi_{\tilde{\varepsilon}} (t)) \in (\mathbf{s}(k), \mathbf{s}(k) + \varepsilon)$, then either $\prim(\varphi_{\varepsilon} (t))< s(k)$ or $\flgr(s, \phi_{\tilde{\varepsilon}}(t) ) \in\B_k \cap (\mathbf{s}(k), \mathbf{s}(k) + \varepsilon) $ for every $s \in [0,1]$. Let us focus on the second case. If there exists a time $s_0\in [0,1]$ such that $\flgr (s_0,  \phi_{\tilde{\varepsilon}}(t)) \in V^{'}$ then, by the assumptions on $V^{'}$, we deduce that $\flgr ( 1 , \phi_{\tilde{\varepsilon}}(t)) \in V^{'}$. Suppose by contradiction that for every $t$ such that $\flgr(s, \phi_{\tilde{\varepsilon}}(t) ) \in\B_k \cap (\mathbf{s}(k), \mathbf{s}(k) + \varepsilon) $ and for every $s \in [0,1]$, $\flgr(s, \phi_{\tilde{\varepsilon}}(t) )$ does not enter $V^{'}$. Then, by \eqref{boundgradient}, it follows that
\begin{equation*} \label{eq1}
\begin{split}
    \prim(\varphi_{\varepsilon} (t)) & = \prim (\phi_{\tilde{\varepsilon}} (t)) + \int_0^1 \frac{d}{ds}\prim(\flgr(s,\phi_{\tilde{\varepsilon}} (t))) \mathrm{d}s  \\
    & \leq \mathbf{s}(k) + \varepsilon - \int_0^1 |\nabla \prim |_{_{\M}}^2 \mathrm{d}s \\
    & \leq \mathbf{s}(k) +\varepsilon - \delta^2 \\
    & < \mathbf{s}(k) \ .
\end{split}
    \end{equation*}
In this way we find an element of $\Gamma_0^k$ such that $ \varphi_{\varepsilon}([0,1]) \subset \{  \prim < \mathbf{s}(k) \} $ which contradicts the definition of $\mathbf{s}(k)$. We conclude that $\varphi_{\varepsilon}$, as defined in~\eqref{element}, is such that either $\prim(\varphi_{\varepsilon}(t))<\mathbf{s}(k)$ or $\prim(\varphi_{\varepsilon}(t))\in (\mathbf{s}(k),\mathbf{s}(k) + \varepsilon)$ and $\varphi_{\varepsilon}(t) \in V^{'}\subset V$. The claim is proved.

    \end{proof}
\end{lemma}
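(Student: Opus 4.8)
The plan is to run a Struwe monotonicity argument tailored to the magnetic action form, using crucially that $k\in J$ is a differentiability point of $\mathbf{s}$ and that $\prim$ satisfies the Palais--Smale condition on $\B_k$. First I would localize around $\C_k$: since $\prim$ is Palais--Smale on $\B_k$ and $\C_k=\Z(\eta_k)\cap\B_k$ is compact, I can choose an open $\V'\supseteq\C_k$ with $\flgr([0,1]\times\V')\subset\V\cap\B_k$ (continuity of the flow plus compactness of $\C_k$) and a constant $\delta>0$ with $|\nabla\prim|_{\M}\geq\delta$ on $\B_k\setminus\V'$; the latter bound is precisely where Palais--Smale enters, for otherwise a sequence in $\B_k\setminus\V'$ along which $|\nabla\prim|_{\M}\to0$ would accumulate at a critical point of $\prim$ outside $\V'$. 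I would then use the choice of $T^*$ in \eqref{tstar}: combining Lemma \ref{periodolimitato} (the period of $\phi_{\tilde\varepsilon}(t)$ is at most $A+2$ whenever $\prim(\phi_{\tilde\varepsilon}(t))>\mathbf{s}(k)-\varepsilon$) with Lemma \ref{gradientlemma}(ii) (the drift of $T$ along $\flgr$ is controlled by the square root of the drop of $\prim$), one sees that any $\flgr$-orbit issued from a point of $\phi_{\tilde\varepsilon}([0,1])$ with $\prim\in[\mathbf{s}(k),\mathbf{s}(k)+\varepsilon)$ keeps period below $T^*$, hence stays in $\B_k$ as long as $\prim$ remains $\geq\mathbf{s}(k)$.

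Next I would construct the comparison path. For $\tilde\varepsilon\in(0,\varepsilon)$ small, choose $\phi_{\tilde\varepsilon}\in\Gamma_0^{k+\tilde\varepsilon}$ with $\max_{t}S^{\sigma}_{k+\tilde\varepsilon}(\phi_{\tilde\varepsilon}(t))\leq\mathbf{s}(k+\tilde\varepsilon)+\tilde\varepsilon$. Since periods are positive and $k<k+\tilde\varepsilon$, one has $\prim\leq S^{\sigma}_{k+\tilde\varepsilon}$ pointwise, so $\phi_{\tilde\varepsilon}\in\Gamma_0^{k}$, and with the Lipschitz estimate $|\mathbf{s}(k+\tilde\varepsilon)-\mathbf{s}(k)|\leq A\tilde\varepsilon$ coming from differentiability at $k$ one gets $\prim(\phi_{\tilde\varepsilon}(t))<\mathbf{s}(k)+(A+1)\tilde\varepsilon<\mathbf{s}(k)+\varepsilon$ once $\tilde\varepsilon$ is small enough. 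Set $\varphi_\varepsilon=\flgr(1,\phi_{\tilde\varepsilon}(\cdot))$; it lies in $\Gamma_0^{k}$ because $\Gamma_0^{k}$ is $\flgr$-invariant, and I claim it is the path we want. Indeed, fix $t$. If $\prim(\phi_{\tilde\varepsilon}(t))<\mathbf{s}(k)$, then $\prim$ decreasing along $\flgr$ (Lemma \ref{gradientlemma}(i)) gives $\prim(\varphi_\varepsilon(t))<\mathbf{s}(k)$. Otherwise, by the period control above, the orbit of $\phi_{\tilde\varepsilon}(t)$ stays inside $\B_k\cap\{\mathbf{s}(k)\leq\prim<\mathbf{s}(k)+\varepsilon\}$ until $\prim$ possibly drops below $\mathbf{s}(k)$.

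In this remaining case I would dichotomize on whether the orbit meets $\V'$. If it does, at some time $s_0\leq1$, then by the choice of $\V'$ and the semigroup property, $\varphi_\varepsilon(t)=\flgr(1-s_0,\flgr(s_0,\phi_{\tilde\varepsilon}(t)))\in\V$, while $\prim(\varphi_\varepsilon(t))\in[\mathbf{s}(k),\mathbf{s}(k)+\varepsilon)$ --- exactly the target location. If instead the orbit never meets $\V'$, it remains in $\B_k\setminus\V'$, where $h\circ\prim\equiv1$ and $|\nabla\prim|_{\M}\geq\delta$, so $\prim$ decreases by a definite positive amount (comparable to $\delta^2$) over the unit time interval; choosing $\varepsilon$ small relative to $\delta^2$ then forces $\prim(\varphi_\varepsilon(t))<\mathbf{s}(k)$, contradicting that the orbit stayed $\geq\mathbf{s}(k)$. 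Hence the orbit must have entered $\V'$, and the asserted inclusion follows. Running this last estimate with $\V'=\emptyset$ also settles non-emptiness: if $\C_k=\emptyset$ then $|\nabla\prim|_{\M}\geq\delta$ on all of $\B_k$ and the argument yields $\varphi_\varepsilon([0,1])\subset\{\prim<\mathbf{s}(k)\}$, contradicting the definition of $\mathbf{s}(k)$; so $\C_k\neq\emptyset$.

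The main obstacle I anticipate is the bookkeeping of the nested small quantities --- $T^*$ large first, then $\varepsilon$ small relative to $\delta^2$, then $\tilde\varepsilon$ small relative to $\varepsilon/(A+1)$ --- arranged so that every flow segment actually used lies inside $\B_k$, the sole region where Palais--Smale (hence the gradient lower bound off $\V'$ and the compactness of $\C_k$) is available; the conversion in Lemma \ref{gradientlemma}(ii) of a bounded decrease of $\prim$ into a bounded drift of the period is the technical device that makes this localization self-consistent.
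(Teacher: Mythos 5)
Your proposal is correct and follows essentially the same route as the paper: the same localization $V'$ with gradient bound $\delta$ off $V'$ from Palais--Smale, the choice $\varepsilon<\delta^2$, the comparison path $\phi_{\tilde\varepsilon}$ at level $k_{\tilde\varepsilon}$ controlled via the Lipschitz constant $A$ from differentiability, the flow to time $1$, the period control via Lemma \ref{periodolimitato} and Lemma \ref{gradientlemma}(ii) keeping orbits in $\B_k$, and the dichotomy whose ``orbit avoids $V'$'' branch is killed by the definite drop of $\prim$. Your per-$t$ resolution of the dichotomy and the explicit $\C_k\neq\emptyset$ argument are only cosmetic variants of the paper's phrasing.
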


\begin{lemma}\label{index1} Let $k\in J$. There exists $\gamma \in \Z(\eta_k)$ such that $\mathrm{index}(\gamma)\leq 1$.
\begin{proof}
By Lemma \ref{struwe1}, the set $\C_k$ is not empty. Let $\gamma \in \C_k$ and consider a local chart of coordinates $(\V_{\gamma},\mathrm{\Phi}_{\gamma})$ and $D_{\gamma}$ chosen as in Lemma \ref{chart}. Let $\rho^t_{\gamma}: B_{r_{\gamma}}^H \times (B_{r_{\gamma}}^E \setminus \{ 0 \} ) \to B_{r_{\gamma}}^H \times B^E_{r_{\gamma}}$ be the deformation given by 
\begin{equation*}
\rho^t_{\gamma}(y_h,y_e )=\Big(y_h,y_e + \min \{  t, r_{\gamma}-|y_e|_0  \} \frac{y_e}{|y_e|_0} \Big)  . 
\end{equation*}
Observe that $\rho^t_{\gamma}$ pushes points of $B_{r_{\gamma}}^H \times (B_{r_{\gamma}}^E \setminus \{ 0 \} )$ towards the boundary, in the direction of $E$. In particular, if $t<r_{\gamma}-|y_e|_0$, then by point (ii) of Lemma \ref{chart}, it holds that
\begin{equation}\label{chart2}
\locprim(\rho^t_{\gamma}(y_h,y_e)) \leq \locprim(y_h,y_e) - D_{\gamma} t^2 .
\end{equation}
Denote by $\W_{\gamma}=\mathrm{\Phi}_{\gamma}^{-1}(B_{\sfrac{r_{\gamma}}{4}}^H \times B^E_{\sfrac{r_{\gamma}}{4}})$ and by $\U_{\gamma}=\mathrm{\Phi}_{\gamma}^{-1}(B_{\sfrac{r_{\gamma}}{2}}^H \times B^E_{\sfrac{r_{\gamma}}{2}})$. Let $\chi_{\gamma}$ be a bump function on $\M_0$, supported in $\V_{\gamma}$ and such that $\chi_{\gamma} (\U_{\gamma})=1$. By contradiction suppose that every $\gamma \in \C_k$ has index greater than 1. Since $\C_k$ is compact, there exist $\gamma_1,..., \gamma_n\in \C_k$ and local charts $(\V_{\gamma_i},\mathrm{\Phi}_{\gamma_i})$ such that $\C_k \subset \bigcup_{i=1}^n \W_{\gamma_i}$. Let us remark that the change of coordinates is bi-Lipschitz because is composition of bi-Lipschitz map. We write $C_{ij}$ the Lipschitz constant of the change of coordinates $\mathrm{\Phi}_{\gamma_j}\circ  \mathrm{\Phi}_{\gamma_i}^{-1}$ and by $|  \cdot  |_{0,i}$ the norm on $\mathrm{\Phi}_{\gamma_i}(\V_{\gamma_i})$ induced by the chart $\mathrm{\Phi}_{\gamma_i}$. \\
Let $\delta=\min_i D_{\gamma_i} $,  $R=\min_i r_{\gamma_i} $, $  C=\max_{i\neq j} C_{ij} $ and fix 
\begin{equation*}
\varepsilon \in \Big(0,\min \Big\{ \frac{R}{4nC}, \frac{R}{4n} \Big\} \Big)  .
\end{equation*}
For $j=1,...,n$ consider the set 
\begin{equation*}
\W_{\gamma_i, j} =  \mathrm{\Phi}_{\gamma_i}^{-1} \Big(\bigcup_{z \in \mathrm{\Phi}_{\gamma_i}(\W_{\gamma_i})}B(z,j\varepsilon C)\Big)  , 
\end{equation*}
where $B(z,j\varepsilon C) \subset \mathrm{\Phi}_{\gamma_i}(\W_{\gamma_i})$ is the ball centered at $z$ with radius $j\varepsilon C$ with respect to the $| \cdot |_{0,i}$. The following inclusions hold
\begin{equation*}
\W_{\gamma_i} \subset \W_{\gamma_i, 1} \subset \W_{\gamma_i, 2} \subset ... \subset \W_{\gamma_i, j}\subset \W_{\gamma_i, j+1} \subset ... \subset \W_{\gamma_i, n} \subset \U_{\gamma_i} \subset \V_{\gamma_i}  . 
\end{equation*}
By Lemma \ref{struwe1}, there exists an element $\phi \in \Gamma_0^k$ such that 
\begin{equation}\label{inclusion}
\phi([0,1]) \subset \{ \prim < \mathbf{s}(k) \} \cup \Big( \{ \mathbf{s}(k) \leq \prim <\mathbf{s}(k) + \varepsilon^2 \delta \} \cap \bigcup_{i=1}^n \W_{\gamma_i} \Big)  . 
\end{equation}
Up to refining the cover, we can assume that the endpoints $\phi(0),\phi(1) \notin \bigcup_{i=1}^n \V_{\gamma_i}$. By point (i) of Lemma \ref{chart}, $\mathrm{\Phi}_{\gamma_i} (\C_k \cap \mathrm{\Phi}_{\gamma_i}^{-1}(\V_{\gamma_i})) \subseteq B^H_{r_{\gamma_i}} \times \{ 0 \} $ for every $i$, and by contradiction we are assuming that $B^H_{r_{\gamma_i}} \times \{ 0 \} $ has codimension bigger than one. Since the domain of $\phi$ has dimension one, with the help of the transversality theorem \cite[Section~5]{trans}, we can find an element $\phi_0$ arbitrarily $C^0$ close to $\phi$ such that its image $\phi_0([0,1]) $ does not intersect $B^H_{r_{\gamma}} \times \{ 0 \}$ and it is still contained in the right-hand term of the inclusion~\eqref{inclusion}. \\
Define 
\begin{equation}\label{deformazione}
\phi_1 (t)=
\left\{\begin{split}
\mathrm{\Phi}_{\gamma_1}^{-1} \Big( \rho_{_{\gamma_1}}^{\varepsilon \chi_{_{\gamma_1}} (\phi_0(t))} (\mathrm{\Phi}_{\gamma_1}(\phi_0(t)))\Big) \ \  \mathrm{if} \ \phi_0(t)\in \V_{\gamma_1} \ ,  \\
\phi_{0}(t) \ \ \ \ \ \ \ \ \ \ \ \ \ \ \mathrm{otherwise}\ .  \ \ \ \  \ \ \ \ \ \ \ \ \ \ \ \    \\ 
\end{split}\right.
\end{equation}
Observe that $\phi_1 \in \Gamma_0^k$ because it is obtained by deforming continuously the segment of $\phi_0$ contained in $V_{\gamma_1}$ by keeping the endpoints fixed. Moreover, if $\phi_0(t) \in \{ \mathbf{s}(k) \leq \prim < \mathbf{s}(k) + \varepsilon^2 \delta \} \cap \W_{\gamma_1}$, then by the choice of $\varepsilon$ and by \eqref{chart2} if follows that 
\begin{align*}
\prim( \phi_1(t))& \leq \prim( \phi_0(t)) - D_{\gamma_1} \varepsilon^2 &&\\
& \leq \mathbf{s}(k) + \delta \varepsilon^2 - D_{\gamma_1}\varepsilon^2 &&\\
&< \mathbf{s}(k)  . 
\end{align*}
Finally, because $ \rho_{_{\gamma_1}}^{\varepsilon \chi_{_{\gamma_1}}} $ pushes points of $\mathrm{\Phi}_{\gamma_1}(\W_{\gamma_1}) $ at distance at most $\varepsilon$, if $\phi_0(t) \in \W_{\gamma_1} \cap \W_{\gamma_i} $ then
\begin{equation}\label{lipschitz}
\Big|     \mathrm{\Phi}_{\gamma_i}  (\phi_1(t)) -  \mathrm{\Phi}_{\gamma_i} (\phi_0(t))   \Big|_{0,i}  \leq C_{1i} \Big|     \mathrm{\Phi}_{\gamma_1} (\phi_1(t)) -   \mathrm{\Phi}_{\gamma_1} (\phi_0(t))   \Big|_{0,1} \leq C\varepsilon   , 
\end{equation}
so that $\phi_1(t) \in \W_{\gamma_i,1}$. \\
Summing up the element $\phi_1$ enjoys 
\begin{equation*}
\phi_1([0,1]) \subset \{ \prim < \mathbf{s}(k) \} \cup \Big( \{ \mathbf{s}(k) \leq \prim < \mathbf{s}(k)+\varepsilon^2 \delta \} \cap \bigcup_{i=2}^n \W_{\gamma_i,1} \Big)  . 
\end{equation*}
We repeat the construction as follows. Define $\phi_2 \in \Gamma_0^k$ as 
\begin{equation*}
\phi_2 (t)=
\left\{\begin{split}
\mathrm{\Phi}_{\gamma_2}^{-1} \Big( \rho_{_{\gamma_2}}^{\varepsilon \chi_{_{\gamma_2}} (\phi_1(t))} (\mathrm{\Phi}_{\gamma_2}(\phi_1(t)))\Big) \ \  \mathrm{if} \ \phi_1(t)\in \V_{\gamma_1} \ ,  \\
\phi_{1}(t) \ \ \ \ \ \ \ \ \ \ \ \ \ \ \mathrm{otherwise}\ .  \ \ \ \  \ \ \ \ \ \ \ \ \ \ \ \    \\ 
\end{split}\right.
\end{equation*}
By using again the inequality~\eqref{chart2}, if $\phi_1(t) \in \W_{\gamma_2,1}$ then 
\begin{align*}
\prim( \phi_2(t))& \leq \prim( \phi_1(t)) - D_{\gamma_2} \varepsilon^2 &&\\
& \leq \mathbf{s}(k) + \delta \varepsilon^2 - D_{\gamma_2}\varepsilon^2 &&\\
&< \mathbf{s}(k)   .
\end{align*}
Moreover, if $\phi_1(t) \in \W_{\gamma_1,1} \cap \W_{\gamma_i,1}$ for some $i\in \{ 2,...,n \}$, then
\begin{align*}
\Big|     \mathrm{\Phi}_{\gamma_i}  (\phi_2(t)) -  \mathrm{\Phi}_{\gamma_i}  (\phi_1(t))   \Big|_{0,i} & \leq C_{2i} \Big|     \mathrm{\Phi}_{\gamma_2} (\phi_2(t)) -   \mathrm{\Phi}_{\gamma_2}(\phi_1(t))   \Big|_{0,2} \leq C\varepsilon   ,  
\end{align*}
which implies that $\phi_2(t) \in \W_{\gamma_i,2}$. Thus, the element $\phi_2$ satisfies
 \begin{equation*}
\phi_2([0,1]) \subset \{ \prim < \mathbf{s}(k) \} \cup \Big( \{ \mathbf{s}(k) \leq \prim < \mathbf{s}(k)+\varepsilon^2 \delta \} \cap \bigcup_{i=3}^n \W_{\gamma_i,2} \Big)  . 
\end{equation*}
Iterating the process, at the step $m$ we find $\phi_m$ such that 
 \begin{equation*}
\phi_m([0,1]) \subset \{ \prim < \mathbf{s}(k) \} \cup \Big( \{ \mathbf{s}(k) \leq \prim <\mathbf{s}(k) +  \varepsilon^2 \delta \} \cap \bigcup_{i=m+1}^n \W_{\gamma_i,m} \Big)  ,
\end{equation*}
at the step $n$, we find an element $\phi_n$ such that 
 \begin{equation*}
\phi_n([0,1]) \subset \{ \prim < \mathbf{s}(k) \}  ,
\end{equation*}
in contradiction with the definition of $\mathbf{s}(k)$.

\end{proof}
\end{lemma}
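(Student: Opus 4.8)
The plan is to argue by contradiction: assume every $\gamma\in\C_k=\Z(\eta_k)\cap\B_k$ has $\mathrm{index}(\gamma)\geq 2$, and then push a near-minimax path in $\Gamma_0^k$ entirely below the level $\mathbf{s}(k)$, contradicting the definition of the minimax value. First I would record that $\mathbf{s}(k)>0$ by Lemma \ref{minimaxlemma} and that $\C_k$ is nonempty and compact, since $\prim$ satisfies Palais--Smale on $\B_k$ and Lemma \ref{struwe1} already rules out $\C_k=\emptyset$. For each $\gamma\in\C_k$ I would invoke Lemma \ref{chart} to obtain a bi-Lipschitz chart $\Phi_\gamma$ centered at $\gamma$ in which $\Z(\eta_k)$ lies in the subspace $B^H_{r_\gamma}\times\{0\}$ — of codimension $\mathrm{index}(\gamma)\geq 2$ under the contradiction hypothesis — and in which the radial push-out $\rho^t_\gamma$ in the $E$-factor lowers every local primitive by at least $D_\gamma t^2$. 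By compactness, finitely many shrunken domains $\W_{\gamma_1},\dots,\W_{\gamma_n}$ cover $\C_k$; set $\delta=\min_i D_{\gamma_i}$, $R=\min_i r_{\gamma_i}$, and let $C$ be the largest Lipschitz constant of the transition maps $\Phi_{\gamma_j}\circ\Phi_{\gamma_i}^{-1}$.

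Next I would fix $\varepsilon>0$ small compared with $\delta$, $R$, $C$ and $n$, and apply Lemma \ref{struwe1} with the neighbourhood $\bigcup_i\W_{\gamma_i}$ of $\C_k$ to obtain a path $\phi\in\Gamma_0^k$ whose high part, where $\mathbf{s}(k)\leq\prim<\mathbf{s}(k)+\varepsilon^2\delta$, is contained in $\bigcup_i\W_{\gamma_i}$; after refining the cover we may assume the endpoints of $\phi$ lie outside all the charts, so they are untouched by the deformations below and $\Gamma_0^k$-membership is preserved. Since the image of $\phi$ is one-dimensional while each $\Z(\eta_k)\cap\V_{\gamma_i}$ sits in a subspace of codimension $\geq 2$, a general-position perturbation, small in $C^0$ and hence preserving the sublevel and neighbourhood constraints, produces $\phi_0\in\Gamma_0^k$ whose image avoids all these subspaces. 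I would then deform one chart at a time: given $\phi_{m-1}$, let $\phi_m$ apply $\rho^{\varepsilon\chi_{\gamma_m}}_{\gamma_m}$ — the push-out cut off by a bump function $\chi_{\gamma_m}$ supported in $\V_{\gamma_m}$ and equal to $1$ on $\U_{\gamma_m}$ — to the portion of $\phi_{m-1}$ inside $\V_{\gamma_m}$, leaving the rest fixed. Two estimates close the induction: on $\W_{\gamma_m}$ the quadratic decrease gives $\prim(\phi_m(t))\leq\prim(\phi_{m-1}(t))-D_{\gamma_m}\varepsilon^2<\mathbf{s}(k)$; and since $\rho$ moves points by at most $\varepsilon$, the bi-Lipschitz bounds show that $\phi_m$ displaces points of $\phi_{m-1}$ by at most $C\varepsilon$ in every other chart's coordinates, so the (shrinking) high part of $\phi_m$ stays inside slightly enlarged neighbourhoods $\W_{\gamma_i,m}\subset\U_{\gamma_i}$ of the remaining charts $i>m$.

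After $n$ steps the high part has been exhausted, so $\phi_n([0,1])\subset\{\prim<\mathbf{s}(k)\}$, contradicting $\mathbf{s}(k)=\inf_{\Gamma_0^k}\max\prim$; hence some zero of $\eta_k$ must have index at most $1$. I expect the main obstacle to be the quantitative bookkeeping across overlapping charts: one must choose $\varepsilon$ simultaneously small against the minimal decrease rate $\delta$, the minimal radius $R$, the transition constant $C$ and the number $n$ of charts, so that each push-out both drops $\prim$ strictly below $\mathbf{s}(k)$ on its own chart and perturbs the path only mildly in the neighbouring ones, guaranteeing that the concentration of the high part near $\C_k$ survives all $n$ deformations. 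A secondary point is carrying out the transversality perturbation compatibly with the constraints defining $\Gamma_0^k$ and with the chart decomposition; this is precisely where the contradiction hypothesis (codimension $\geq 2$ of the zero locus in each chart) is used.
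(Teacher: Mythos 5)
Your proposal is correct and follows essentially the same route as the paper: contradiction via the index assumption, the isolating charts of Lemma \ref{chart} with the quadratic decrease of the push-out, a finite cover of the compact set $\C_k$, the near-minimax path from Lemma \ref{struwe1}, a transversality perturbation exploiting the codimension $\geq 2$ of the zero locus, and a chart-by-chart deformation controlled by the constants $\delta$, $R$, $C$, $n$ until the whole path drops below $\mathbf{s}(k)$. The quantitative bookkeeping you flag is exactly what the paper resolves by taking $\varepsilon<\min\{R/(4nC),R/(4n)\}$ and working with the nested enlargements $\W_{\gamma_i,j}\subset\U_{\gamma_i}$.
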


\subsection{Non weakly exact case}\label{nonweaklyexact}
\subsubsection{Variation of $\eta_k$ along paths}
Assume $\sigma$ is not weakly exact. In particular, $\man= + \infty$ and $\eta_k$ does not admit a primitive globally defined on $\M_0$. Despite that, if $u:[0,1] \to \M_0$ is a continuous path the action variation $\av (u):[0,1] \to \R$ of $\eta_k$ along $u$ is always well defined and it is given by 
\begin{equation*}
\av(u)(t) = \int_0^t u^* \eta_k  . 
\end{equation*}
As argued in \cite[Section~2.3]{AB}, if $\U \subseteq \M_0$ is an open subset and $u([0,1]) \subset \U$, then for every local primitive $\prim: \U \to \R$ we have
\begin{equation}\label{primug}
\av(u)(t)=\prim(u(t)) - \prim(u(0))   , \ \forall t \in [0,1]  . 
\end{equation}
Let $u:[0,1]\times[0,R] \to \M_0$ be a homotopy of $u$ with the starting point fixed and write $u_s=u(\cdot,s)$ and by $u^t=u(t, \cdot) $. The fact that $\eta_k$ is closed implies that
\begin{equation}\label{omotopia}
\av(u_s)(t)=\av(u_0)(t) + \av(u^t)(R)  .
\end{equation}
\subsubsection{Minimax geometry}
Consider $M^+$ and $\V_{\delta}$ as in the paragraph \ref{minimaxdebole}. If $\delta$ is small enough, then we can restrict definition~\eqref{primcon} of $\prim$ to set of short loops $\V_{\delta}\subset \M_0$. Moreover, as argued in \cite[Lemma~3.2]{AB}, such a primitive also enjoys that
\begin{equation*}
\inf_{\V_{\delta}} \prim = 0  .
\end{equation*}
By identifying the north and the south pole of the unit sphere $S^2$ with the end points of the interval $[0,1]$, we have a 1-1 correspondence between
\begin{equation*}
\Big\{ f:S^2 \to M \Big\} \xlongrightarrow {\text{F}} \Big\{ \phi: \Big( [0,1],\{ 0, 1 \} \Big) \to \Big(\M_0, M^+   \Big) \Big\}  .
\end{equation*}
It is a well known fact that $F$ descends to the homotopy quotient \cite{Kli1}. Because $\sigma$ is non weakly exact $\pi_2(M) \neq \{ 0 \}$ so that given a non trivial element $\mathfrak{u} \in \pi_2(M)$ one can consider the following set 
\begin{equation*}
\Gamma_{\mathfrak{u}} = \Big\{ \phi: \Big( [0,1],\{ 0, 1 \} \Big) \to \Big(\M_0, M^+   \Big), \ F^{-1}(\phi) \in \mathfrak{u} \Big\} . 
\end{equation*}
For $\phi \in \Gamma_{\mathfrak{u}}$ define a primitive $\prim (\phi) :[0,1] \to \R $ of $\eta_k$ along $\phi$ by 
\begin{equation}\label{primele}
\prim (\phi)(t) = \av(\phi)(t) + T_{\phi(0)}k . 
\end{equation}
In this setting, the minimax value function is given by 
\begin{equation*}
\mathbf{s}^{\mathfrak{u}}: \rp \to \rp   , \  \mathbf{s}^{\mathfrak{u}}(k) = \inf_{\phi \in \Gamma^{\mathfrak{u}}} \max_{t\in[0,1]} \pact (\phi)(t)  . 
\end{equation*}
In analogy with the weakly exact case, the following lemma holds.
\begin{lemma}\label{monotone} Let $I$ be an open interval with compact closure fully contained in $\rp$. There exists a positive $\varepsilon$ such that 
\begin{equation}
\su(k) \geq \varepsilon  , \ \forall t \in I  . 
\end{equation}
Moreover $\su$ is monotone increasing on $I$.
\begin{proof}
The proof is contained in \cite[Section~4]{AB}. 

\end{proof}
\end{lemma}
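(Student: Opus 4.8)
The plan is to reduce both assertions to the single statement that $\su(k)>0$ for every fixed $k>0$, with a lower bound which is uniform for $k$ in a compact subset of $\rp$; the monotonicity is then essentially formal, and the core is a linking estimate analogous to the one used in the weakly exact case above.

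For the monotonicity I would use that on $\M=\Lambda\times\rp$ one has $\eta_k=\eta_0+k\,\mathrm{d}T$, since $A_k-A_0=kT$. Combining this with $\pact(\phi)(t)=\av(\phi)(t)+T_{\phi(0)}k$ from \eqref{primele} yields, for every $\phi\in\suk$ and $t\in[0,1]$, the identity $\pact(\phi)(t)=\int_0^t\phi^*\eta_0+k\,T_{\phi(t)}$. Since $T_{\phi(t)}>0$, passing from $k'$ to $k>k'$ increases the right-hand side by $(k-k')T_{\phi(t)}>0$; taking $\max_{t\in[0,1]}$ and then $\inf_{\phi\in\suk}$ preserves the inequality, so $\su$ is monotone increasing on all of $\rp$. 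Hence, if $\overline{I}\subset\rp$ and $k_-:=\inf I>0$, it suffices to produce a single constant $\varepsilon=\varepsilon(k_-)>0$ with $\su(k)\geq\varepsilon$ for $k\in I$; in fact the estimate below applies to every $k>0$ and is uniform on $\{k\geq k_-\}$.

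For the lower bound, fix $\delta>0$ small enough that the neighbourhood $\V_\delta\subset\M_0$ of $M^+$ formed by loops of length $<\delta$ carries the primitive \eqref{primcon} and admits a strong deformation retraction onto $M^+$ (each short loop being contracted canonically to its initial point, which fixes $M^+$ throughout). The key topological claim is that every $\phi\in\suk$ contains a loop of length $\geq\ell_0$, for any fixed $\ell_0\in(0,\delta)$: otherwise $\phi$ would take values in $\{\mathrm{length}<\ell_0\}\subseteq\V_\delta$, and composing with the retraction would produce, inside $\suk$, a path $\phi'$ valued in $M^+$; but then $F^{-1}(\phi')\colon S^2\to M$ collapses every latitude circle to a point, hence factors through the interval and is nullhomotopic, contradicting $\mathfrak u\neq0$.

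Granting this, fix $\phi\in\suk$ and let $t_1\in(0,1)$ be the first time at which $\ell(\phi(t_1))=\ell_0$ (it exists by the intermediate value theorem, since the length is continuous along $\phi$ and $\ell(\phi(0))=0$); by construction $\phi([0,t_1])\subset\V_\delta$. On $\V_\delta$, \eqref{primug} together with $\prim(\phi(0))=T_{\phi(0)}k$ (a constant loop admits a constant capping disk) gives $\pact(\phi)(t)=\prim(\phi(t))$ for $t\leq t_1$. It remains to bound $\prim$ from below on the wall $\{\ell=\ell_0\}$: by Cauchy--Schwarz and the arithmetic--geometric mean inequality one has $A_k(x,T)\geq\sqrt{2k}\,\ell(x)$, while the capping term is $O(\ell(x)^2)$ by the isoperimetric inequality for short loops, so $\prim(\phi(t_1))\geq\sqrt{2k}\,\ell_0-C\|\sigma\|_\infty\ell_0^2\geq\tfrac12\sqrt{2k}\,\ell_0$ as soon as $\ell_0$ (hence $\delta$) is taken below $\sqrt{2k_-}/(2C\|\sigma\|_\infty)$, a bound depending only on $k_-$. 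Therefore $\max_{t}\pact(\phi)(t)\geq\tfrac12\sqrt{2k}\,\ell_0\geq\tfrac12\sqrt{2k_-}\,\ell_0=:\varepsilon>0$ for all $k\in I$ and all $\phi\in\suk$, so $\su(k)\geq\varepsilon$ on $I$. I expect the main obstacle to be the topological step, i.e.\ making rigorous that an admissible sweepout cannot remain among short loops (the linking between $\suk$ and the wall $\{\ell=\ell_0\}$); once that is in place the remaining estimates are routine and parallel those of the weakly exact case.
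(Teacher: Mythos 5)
Your proposal is correct and is essentially the argument the paper outsources to \cite[Section~4]{AB}: monotonicity comes from $\eta_k-\eta_{k'}=(k-k')\,\mathrm{d}T$ together with the fact that the admissible class $\Gamma_{\mathfrak{u}}$ does not depend on $k$, and positivity comes from the fact that a sweepout in a class $\mathfrak{u}\neq 0$ must leave the short-loop neighbourhood of $M^+$ (otherwise the associated sphere map factors through an interval and is nullhomotopic), where on the wall $\{\ell=\ell_0\}$ the kinetic term $\sqrt{2k}\,\ell_0$ dominates the $O(\ell_0^2)$ capping contribution, uniformly for $k\geq \inf I$. The technical points you flag (continuity in $H^1$ of the geodesic contraction onto constant loops and well-definedness of the local primitive on short loops) are standard and are exactly what the cited reference provides.
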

Let $N_k = \{ (x,T) \in \V_{\delta} \ | \ \prim(x,T) < \frac{\varepsilon}{4} \}$ and let $h$ be a cut-off function such that  $h^{-1}(1)=[\frac{\varepsilon}{2}, +\infty )$ and $h^{-1}(0)=(-\infty , \frac{\varepsilon}{4}]$. Define $\tilde{h}: \M_0 \to \R$ as follows
\begin{equation*}
\tilde{h} (x,T)=
\left\{\begin{split}
 \ \ 1 \ \  \mathrm{if} \ (x,T) \in \M_0\setminus N_k   \\
h \circ \prim  \ \ \mathrm{if} \  (x,T) \in N_k    \ \ 
\end{split}\right.
 \end{equation*}
Let $X_k$ be the vector field on $\M_0$ given by 
\begin{equation*}
X_k = \tilde{h} \cdot \frac{ \eta^{\#}_k}{\sqrt{1+ | \eta^{\#}_k|^2_{\M}}}  , 
\end{equation*}
where $ \eta^{\#}_k$ is the dual vector field of the 1-form $\eta_k$ obtained through the Riemannian metric $g_{\M}$. In accordance with the weakly exact case, the pseudo gradient $\pgr$ enjoys the following properties
\begin{lemma}\label{flussograd} The flow $\flgr$ of $\pgr$ is positively complete and $\Gamma_{\mathfrak{u}}$ is a $\flgr$-invariant set. Moreover, if $u=(x,T):\rp \to \M_0$ is a flow line of $\flgr$, then for every $s \in \rp$, 
\begin{itemize}
\item [(i)] $\av (u) (s) \leq 0$, 
\item[(ii)] $|T(s)-T(0)|^2 \leq -s \av(u)(s)$. 
\end{itemize}

\begin{proof}
The details of the proof are in \cite[Proposition~2.8~and~Lemma~2.9]{AB}.

\end{proof}
\end{lemma}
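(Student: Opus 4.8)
The plan is to follow the strategy of the weakly exact case (Lemma \ref{gradientlemma}), with the action variation $\av$ playing the role of the, now unavailable, global primitive. Everything rests on two pointwise inequalities which I would read off directly from the construction of $\pgr$ as a negative pseudo-gradient of $\eta_k$: at every $\gamma\in\M_0$,
\begin{equation*}
\eta_k(\pgr)=-\,\tilde h\,\frac{|\eta_k^{\#}|_{\M}^2}{\sqrt{1+|\eta_k^{\#}|_{\M}^2}}\ \le\ 0,\qquad |\pgr|_{\M}^2\ \le\ -\,\eta_k(\pgr),
\end{equation*}
the second one because $\tilde h\le 1$ and $1+|\eta_k^{\#}|_{\M}^2\ge\sqrt{1+|\eta_k^{\#}|_{\M}^2}$; in particular $|\pgr|_{\M}\le 1$ everywhere.

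Granting this, statements (i) and (ii) are short. For a flow line $u=(x,T):\rp\to\M_0$ one has $\tfrac{\mathrm{d}}{\mathrm{d}s}\av(u)(s)=\eta_k(u(s))(\pgr(u(s)))\le 0$ and $\av(u)(0)=0$, which gives (i). For (ii) I would write $\dot u=(\dot x,\dot T)$ in the $g_{\M}$-orthogonal splitting $T\M=T\Lambda\oplus\R\tfrac{\partial}{\partial T}$; since $|\tfrac{\partial}{\partial T}|_{\M}=1$ this yields $|\dot T(r)|^2\le|\pgr(u(r))|_{\M}^2\le-\eta_k(\pgr(u(r)))$, and Cauchy--Schwarz then gives
\begin{equation*}
|T(s)-T(0)|^2=\Big|\int_0^s\dot T(r)\,\mathrm{d}r\Big|^2\le s\int_0^s|\dot T(r)|^2\,\mathrm{d}r\le -s\int_0^s\eta_k(\pgr(u(r)))\,\mathrm{d}r=-s\,\av(u)(s).
\end{equation*}

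For the $\flgr$-invariance of $\Gamma_{\mathfrak u}$ I would first check that $\flgr$ preserves $M^+$: on a constant loop $(x,T)$ one has $\speed\equiv 0$, so $\eta_k(x,T)(V,0)=0$ and $\eta_k(x,T)(0,\tau)=\tau k$, hence $\eta_k^{\#}$, and therefore $\pgr$, are tangent to $M^+$ (they point along $\tfrac{\partial}{\partial T}$). Consequently, for $\phi\in\Gamma_{\mathfrak u}$ the family $(r,t)\mapsto\flgr(r,\phi(t))$ is a homotopy inside the space of paths $([0,1],\{0,1\})\to(\M_0,M^+)$, so it fixes the associated class $\mathfrak u\in\pi_2(M)$; hence $\flgr(s,\Gamma_{\mathfrak u})\subseteq\Gamma_{\mathfrak u}$.

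The remaining and genuinely delicate point is positive completeness, which I expect to be the main obstacle. Since $|\pgr|_{\M}\le 1$, a maximal integral curve $u:[0,s^\ast)\to\M_0$ with $s^\ast<\infty$ is $1$-Lipschitz and hence extends continuously to $s^\ast$ in the metric completion of $(\M_0,d_{\M})$; because $(\Lambda,d_\Lambda)$ is complete and $g_{\M}=g_\Lambda+\mathrm{d}T^2$, the only way $u(s^\ast)$ can fail to lie in $\M_0$ is that $T(s)\to 0$. To rule this out I would use that $\pgr$ vanishes identically on $N_k=\{\prim<\varepsilon/4\}\cap\V_{\delta}$, so that a flow line which enters $N_k$ is constant there, together with the fact that a direct computation gives the $\tfrac{\partial}{\partial T}$-component of $\eta_k^{\#}$ at $(x,T)$ as $k-\tfrac{1}{2T^2}\int_0^1|\dot x|^2\,\mathrm{d}s$, which, as $T\downarrow 0$ with $\int_0^1|\dot x|^2$ bounded below, drives $T$ \emph{away} from $0$; combining this with the estimate (ii) prevents $T(s)$ from reaching $0$ in finite time. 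Making this last argument precise is the only technical step; it is carried out in detail in \cite[Proposition~2.8 and Lemma~2.9]{AB}, which I would follow for the details.
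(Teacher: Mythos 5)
Your proposal is correct and takes essentially the same route as the paper, which simply delegates the proof to \cite[Proposition~2.8 and Lemma~2.9]{AB}: your pointwise bounds $\eta_k(\pgr)\le 0$ and $|\pgr|^2_{\M}\le -\eta_k(\pgr)$ give (i) and (ii) exactly as in the weakly exact case (Lemma \ref{gradientlemma}), the tangency of $\pgr$ to $M^+$ along constant loops gives the $\flgr$-invariance of $\Gamma_{\mathfrak{u}}$, and the one point you defer (positive completeness, i.e.\ ruling out $T\to 0$ in finite time near short loops, using that $\pgr$ vanishes on $N_k$) is precisely the content of the cited statements. The only remark worth making is that you silently fixed the sign convention: the displayed formula for $\pgr$ in the paper lacks a minus sign, but the intended flow is the negative pseudo-gradient flow, exactly as your computation assumes.
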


\subsubsection{Existence almost everywhere and index estimates}
In what follows we readapt Lemma \ref{periodolimitato}, Lemma \ref{struwe1} and Lemma \ref{index1} to the minimax geometry of $\eta_k$ on $\Gamma^{\mathfrak{u}}$. By Lemma \ref{monotone}, $\su$ is monotone increasing. Therefore there exists $J\subset I$ of full Lebesgue measure such that $\su$ is differentiable on $J$. 
\begin{lemma}\label{periodo}
Let $k \in J$. Then there exists a constant $A>0$ such that $\forall \varepsilon >0$, by writing $k_{\varepsilon}= k + \varepsilon$, if $\phi \in \suk $ is such that $\prim (\phi)(t) \leq \su(k_{\varepsilon}) + \varepsilon$ for all $t \in [0,1]$, then when $\phi(t)$ satisfies also $\prim (\phi)(t) > \su(k) - \varepsilon$, the inequality $T_{\phi(t)}\leq A$ holds.
\begin{proof}
The fact that $k$ is a point of differentiability for $\su$ implies the existence of a positive constant $A=A(k)$ such that 
\begin{equation*}
|\su ( k_{\varepsilon}) - \su(k) | \leq A \cdot |k_{\varepsilon}-k| \ , \ \forall k_{\varepsilon} \in I. 
\end{equation*}
Fix $\varepsilon>0$ and $\phi \in \suk$. If $\phi(t)$ is as in the statement then 
\begin{equation*}
T_{\phi(t)} = \frac{S^{\sigma}_{k_{\varepsilon}}(\phi)(t) - \prim(\phi)(t)}{k_{\varepsilon} - k} \leq \frac{\su(k_{\varepsilon})+ \varepsilon - \su(k) + \varepsilon}{\varepsilon}\leq A+2.
\end{equation*}

\end{proof}
\end{lemma}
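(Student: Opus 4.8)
The plan is to readapt Lemma~\ref{periodolimitato} to the non weakly exact setting, using the primitive $\prim(\phi)$ of $\eta_k$ along a path from~\eqref{primele} in place of a globally defined primitive. The engine of the argument is the identity
\begin{equation*}
S^{\sigma}_{k_\varepsilon}(\phi)(t)-\prim(\phi)(t)=\varepsilon\,T_{\phi(t)},\qquad t\in[0,1],
\end{equation*}
which is the path analogue of the relation $T_{\phi(t)}=\big(S^{\sigma}_{k_\varepsilon}(\phi(t))-\prim(\phi(t))\big)/(k_\varepsilon-k)$ already used in Lemma~\ref{periodolimitato}. To establish it I would start from the elementary remarks that, as functions of $(x,T)\in\M$, the kinetic actions satisfy $A_{k_\varepsilon}-A_k=\varepsilon\,T$, and that the form $\Theta$ entering the definition of $\eta_k$ does not depend on $k$; hence $\eta_{k_\varepsilon}-\eta_k=\mathrm{d}(A_{k_\varepsilon}-A_k)=\varepsilon\,\mathrm{d}T$ on $\M_0$. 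Integrating this $1$-form along $\phi$ and inserting in~\eqref{primele} the boundary contributions $T_{\phi(0)}k_\varepsilon$ and $T_{\phi(0)}k$ — which exactly cancel the extra term $\varepsilon T_{\phi(0)}$ — yields the displayed formula.

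Next I would invoke the hypothesis $k\in J$: by definition of $J$ the function $\su$ is differentiable at $k$, so there is a constant $A=A(k)>0$ with $|\su(k_\varepsilon)-\su(k)|\le A\varepsilon$ once $\varepsilon$ is small enough that $k_\varepsilon\in I$. Given $\phi\in\suk$ almost realizing the minimax value at energy $k_\varepsilon$ — say $S^{\sigma}_{k_\varepsilon}(\phi)(t)\le\su(k_\varepsilon)+\varepsilon$ for every $t$ — and any $t$ with $\prim(\phi)(t)>\su(k)-\varepsilon$, the identity above gives
\begin{equation*}
\varepsilon\,T_{\phi(t)}=S^{\sigma}_{k_\varepsilon}(\phi)(t)-\prim(\phi)(t)<\su(k_\varepsilon)+\varepsilon-\su(k)+\varepsilon\le(A+2)\,\varepsilon,
\end{equation*}
so that $T_{\phi(t)}<A+2$; renaming the constant produces the stated bound.

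I do not expect a genuine obstacle here: once the comparison $\eta_{k_\varepsilon}-\eta_k=\varepsilon\,\mathrm{d}T$ and its integrated form along $\phi$ are in hand, everything else is bookkeeping that runs exactly parallel to the weakly exact case in Lemma~\ref{periodolimitato}. The only subtlety worth flagging is that differentiability of $\su$ at the single point $k$ only controls $\su(k_\varepsilon)-\su(k)$ for small $\varepsilon$, so the conclusion should be read as holding for all sufficiently small $\varepsilon>0$ (the same caveat already being implicit in Lemma~\ref{periodolimitato}), not literally for every $\varepsilon$.
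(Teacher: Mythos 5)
Your proposal is correct and follows essentially the same route as the paper: the bound comes from the identity $S^{\sigma}_{k_\varepsilon}(\phi)(t)-\prim(\phi)(t)=\varepsilon\,T_{\phi(t)}$ combined with the Lipschitz-type estimate $|\su(k_\varepsilon)-\su(k)|\leq A\varepsilon$ furnished by differentiability of $\su$ at $k\in J$, exactly as in the paper's two-line computation. Your explicit justification of that identity via $\eta_{k_\varepsilon}-\eta_k=\varepsilon\,\mathrm{d}T$ and the cancellation of the boundary term $\varepsilon T_{\phi(0)}$ is precisely what the paper leaves implicit, and your caveats (small $\varepsilon$, renaming $A+2$ as $A$) are harmless readings consistent with how the lemma is later used.
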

Let again T such that 
\begin{equation}\label{tstarn}
T^* = \sqrt{2\su(k)}+3(A+1),
\end{equation}
and consider the set
\begin{equation*}
\B_k =( \M_0 \setminus N_k) \cap \{ T\leq T^* \}  .
\end{equation*}
By \cite[Proposition~5.8]{Merry} and by \cite[Theorem~2.6]{AB}, every vanishing sequence of $\eta_k$ contained in $\B_k$ is compact so that also the set $\C_k = \B_k \cap \Z(\eta_k)$ is compact. 
\begin{lemma}\label{struwe2} Let $k \in J$. Then for every $V$ open neighborhood of $\C_k$ and for every $\varepsilon>0$ there exists an element $\varphi_{\varepsilon} \in \suk$ such that, $\forall t \in [0,1]$ either $\prim(\varphi_{\varepsilon})(t) < s^{\mathrm{u}}(k)$ or $\prim(\varphi_{\varepsilon})(t) \in [s^{\mathrm{u}}(k) ,  \su +\varepsilon)$ and $\varphi_{\varepsilon}(t) \in V$. In particular,  $\C_k$ is non empty.

\begin{proof}
Because $\C_k$ consists of zeros for $\eta_k$ and vanishing sequences of $\B_k$ converge, there exists $V^{'} \subset V$ and a positive $\delta=\delta(V^{'})$ such that $V^{'}$ still contains $\C_k$, $\flgr ([0,1] \times V^{'}) \subset V \cap \B_k$ and 
\begin{equation}\label{d1}
| \eta^{\#}_k |_{\M_0} \geq \delta   , \  \forall (x,T) \in \B_k \setminus V^{'}.
\end{equation}
Let $\varepsilon$ be such that $\varepsilon< \delta^2$. By definition of $\su$, for every $ \tilde{\varepsilon} \in (0, \varepsilon)$ there exists $\phi_{\tilde{\varepsilon}} \in \suk$ such that 
\begin{equation*}
\max_{t \in [0,1]} \prim(\phi_{\tilde{\tilde{\varepsilon}}})(t) < \su(k_{\tilde{\varepsilon}}) + \tilde{\varepsilon}  .
\end{equation*}
For $s\in[0,1]$ define 
\begin{equation}\label{d2}
 \varphi^s_{\varepsilon}(t)= \flgr (s, \varphi_{\tilde{\varepsilon}}(t)) \ , 
\end{equation}
and observe that, by~\eqref{omotopia},
\begin{equation}\label{d3}
\prim(\varphi^s_{\varepsilon})(t)= \prim(\phi_{\tilde{\varepsilon}})(t) + \av \Big(\flgr ( \cdot, \varphi_{\tilde{\varepsilon}}(t))\Big)(s) .
\end{equation}
Similarly to the weakly exact case, we claim that the element $\varphi_{\varepsilon}=\varphi^1_{\varepsilon} \in \suk$ is the desired one. Indeed, if $t \in [0,1]$ is such that $\prim (\phi_{\tilde{\varepsilon}})(t) < \su(k)$, by point (i) of Lemma \ref{flussograd}, $\av$ is non positive along flow lines of $\flgr$ and \eqref{d3} consequently implies that $\prim( \varphi_{\varepsilon})(t)< \su(k)$. On the other hand, if $\prim(\phi_{\tilde{\varepsilon}})(t) \in [\su(k), \su(k) + \varepsilon)$, by the choice of $T^*$ in \eqref{tstarn}, by Lemma \ref{periodo} and Lemma \ref{flussograd}, it follows that either $\prim( \varphi_{\varepsilon})(t)< \su(k)$ or $\prim( \varphi_{\varepsilon})(t)\in[\su(k), \su(k) + \varepsilon)$ and $\varphi^{s}_{\varepsilon}(t) \in \B_k$ for every $s \in [0,1]$. Let us focus on the second case. By the assumptions on $V^{'}$, if for some time $s_0 \in [0,1]$ it follows that $\varphi^{s_0}_{\varepsilon}(t)\in V^{'}$ so that also $\varphi_{\varepsilon}(t)$ is still belonging to $V^{'}$. Suppose by contradiction that $\varphi^s_{\varepsilon}(t)$ does not enter $V^{'}$ for every $s\in [0,1]$.
Write $\varphi_{\varepsilon}(s, t)= \varphi^s_{\varepsilon}(t)$ and observe that, by~\eqref{d1}, 
\begin{equation}\label{d4}
\av (\varphi_{\varepsilon}(\cdot, t))(1) = \int_0^1 \flgr( \cdot, \phi_0(t))^* \eta_k \leq - \int_0^1 |\nabla \eta_k|_{\M_0}^2 \leq -\delta^2 ,
\end{equation}
By \eqref{d3} and by \eqref{d4}, we conclude that
\begin{equation*}
\prim(\phi)(t) \leq \prim(\phi_0)(t) - \delta^2 \leq \su(k) + \varepsilon - \delta^2 < \su(k)  . 
\end{equation*}
Summing up the element $\varphi_{\varepsilon} \in \suk$ is such that for every $t \in [0,1]$, $\prim(\varphi_{\varepsilon})(t) < \su (k) $ in contradiction with the definition of $\su(k)$. Therefore, the element $\varphi_{\varepsilon}$ is such that either $\prim(\varphi_{\varepsilon})(t) < s^{\mathrm{u}}(k)$ or $\prim(\varphi_{\varepsilon})(t) \in [s^{\mathrm{u}}(k) ,  \su +\varepsilon)$ and $\varphi_{\varepsilon}(t) \in V{'} \subset V$. The claim holds.

\end{proof}
\end{lemma}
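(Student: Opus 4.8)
The plan is to carry out the Struwe monotonicity argument adapted to the minimax family $\suk$, in complete parallel with the weakly exact Lemma \ref{struwe1}. Since $k\in J$ the value $\su$ is differentiable at $k$, and the crux of the proof is that a path in $\suk$ which almost realizes the slightly larger minimax value $\su(k_{\tilde{\varepsilon}})$ for $k_{\tilde{\varepsilon}}=k+\tilde{\varepsilon}$, once it is pushed down by the pseudo-gradient flow $\flgr$ of $\pgr$, is forced at each parameter $t$ either strictly below the level $\su(k)$ or into an arbitrarily small neighborhood of the compact critical set $\C_k=\B_k\cap\Z(\eta_k)$.

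First I would fix the deformation data. Because every vanishing sequence of $\eta_k$ contained in $\B_k$ is compact (by \cite[Proposition~5.8]{Merry} and \cite[Theorem~2.6]{AB}) and the points of $\C_k$ are fixed points of $\flgr$, for the given open neighborhood $V$ of $\C_k$ one can choose a smaller open neighborhood $V'$ with $\C_k\subseteq V'$, together with a constant $\delta=\delta(V')>0$, such that $|\eta^{\#}_k|_{\M_0}\geq\delta$ on $\B_k\setminus V'$ and $\flgr([0,1]\times V')\subseteq V\cap\B_k$. Here the invariance of $\B_k$ under short-time flow uses the quadratic period estimate $|T(s)-T(0)|^2\leq -s\,\av(u)(s)$ of Lemma \ref{flussograd} together with the additive slack $3(A+1)$ built into the choice of $T^*$ in \eqref{tstarn}; and one observes that if a flow line ever enters the short-loop region $N_k$, then its action has dropped below $\tfrac{\varepsilon}{4}<\su(k)$ (using Lemma \ref{monotone} to keep $\su(k)$ bounded away from $0$), so it already lies in $\{\prim<\su(k)\}$.

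Next, fix $\varepsilon<\delta^2$. By definition of $\su$ and Lemma \ref{monotone}, for every small $\tilde{\varepsilon}\in(0,\varepsilon)$ there is $\phi_{\tilde{\varepsilon}}\in\suk$ with $\max_t S^{\sigma}_{k_{\tilde{\varepsilon}}}(\phi_{\tilde{\varepsilon}})(t)<\su(k_{\tilde{\varepsilon}})+\tilde{\varepsilon}$; shrinking $\tilde{\varepsilon}$ so that $(A+1)\tilde{\varepsilon}<\varepsilon$, Lemma \ref{periodo} — which is precisely where the differentiability of $\su$ enters, through its Lipschitz constant $A=A(k)$ — gives $\prim(\phi_{\tilde{\varepsilon}})(t)<\su(k)+\varepsilon$ for all $t$ and a uniform period bound on those $\phi_{\tilde{\varepsilon}}(t)$ with $\prim(\phi_{\tilde{\varepsilon}})(t)>\su(k)-\varepsilon$. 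Then I would set $\varphi_{\varepsilon}=\flgr(1,\phi_{\tilde{\varepsilon}}(\cdot))\in\suk$ (invariance of $\suk$, Lemma \ref{flussograd}). By the additivity \eqref{omotopia} of the action variation along the homotopy $s\mapsto\flgr(s,\phi_{\tilde{\varepsilon}}(\cdot))$ one has $\prim(\varphi_{\varepsilon})(t)=\prim(\phi_{\tilde{\varepsilon}})(t)+\av(\flgr(\cdot,\phi_{\tilde{\varepsilon}}(t)))(1)$, and point (i) of Lemma \ref{flussograd} makes the last term $\leq 0$; hence if $\prim(\phi_{\tilde{\varepsilon}})(t)<\su(k)$ then $\prim(\varphi_{\varepsilon})(t)<\su(k)$. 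If instead $\prim(\phi_{\tilde{\varepsilon}})(t)\in[\su(k),\su(k)+\varepsilon)$, the period bound places $\phi_{\tilde{\varepsilon}}(t)$ in $\B_k$ and the whole flow line stays in $\B_k$ for $s\in[0,1]$; either it avoids $V'$, in which case $|\eta^{\#}_k|_{\M_0}\geq\delta$ along it forces a descent of the action by at least $\int_0^1|\eta^{\#}_k|_{\M_0}^2\geq\delta^2>\varepsilon$, so $\prim(\varphi_{\varepsilon})(t)<\su(k)$, or it enters $V'$ at some time $s_0$ and then $\varphi_{\varepsilon}(t)\in\flgr([0,1]\times V')\subseteq V$. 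This is exactly the claimed dichotomy. Finally, $\C_k\neq\emptyset$: were it empty, running the construction with $V=\emptyset$ would yield $\varphi_{\varepsilon}\in\suk$ with $\prim(\varphi_{\varepsilon})(t)<\su(k)$ for all $t$, hence $\max_t\prim(\varphi_{\varepsilon})(t)<\su(k)$, contradicting the definition of $\su(k)$ as the minimax over $\suk$.

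The main obstacle is the first step: rigorously establishing the confinement of the pseudo-gradient flow — that a flow line starting in $\B_k$ with action near $\su(k)$ remains in $\B_k$ on $[0,1]$ and, once it meets $V'$, never leaves $V$. This requires handling three inputs at once, namely the gradient lower bound $\delta$ on $\B_k\setminus V'$ from compactness of vanishing sequences, the period control $|T(s)-T(0)|^2\leq -s\,\av(u)(s)$ of Lemma \ref{flussograd}, and the observation that passing through $N_k$ already certifies a descent below $\su(k)$; and it is exactly for this bookkeeping that $T^*$ must carry the additive slack $3(A+1)$ appearing in \eqref{tstarn}.
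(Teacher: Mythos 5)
Your proposal is correct and follows essentially the same route as the paper's proof: the same choice of $V'$ and $\delta$ from compactness of vanishing sequences in $\B_k$, the same use of Lemma \ref{periodo} (via differentiability of $\su$ at $k$) and of the slack in $T^*$ to keep the relevant flow lines in $\B_k$, the same additivity identity \eqref{omotopia} with Lemma \ref{flussograd} to propagate the level bound, and the same dichotomy ``descent by $\delta^2>\varepsilon$ versus entering $V'$ and hence landing in $V$,'' with nonemptiness of $\C_k$ extracted from the minimax characterization of $\su(k)$. No gaps beyond the bookkeeping you already flag, which matches the paper's treatment.
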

\begin{lemma}\label{index2} Let $k\in J$. There exists $\gamma \in \Z(\eta_k)$ such that $\mathrm{index}(\gamma)\leq 1$.
\begin{proof}
We proceed by contradiction by following the same scheme and by adopting the same setting and the same notation as in the proof of Lemma \ref{index1}. By Lemma \ref{struwe2} and by transversality theorem \cite[Section~5]{trans}, we can find an element $\phi_0 \in \suk$ such that its image does not intersect $B^H_{r_{\gamma_i}}\times \{ 0\}$ for every $i$ and it satisfies that for every $t \in [0,1]$ either $\prim(\phi_0)(t)< \su(k)$ or $\prim(\phi_0)(t) \in [\su(k),  \su(k) + \varepsilon^2 \delta)$ and $\phi_0(t) \in \bigcup_{i=1}^{n} \W_{\gamma_i}$. Define $\phi_1$ by~\eqref{deformazione} and look at $\phi_0(t) \in \W_{\gamma_1}$. Let $\prim$ a primitive of $\eta_k$ defined on $\W_{\gamma_1}$ and $\alpha(\phi_0(t)):[0,\varepsilon] \to \W_{\gamma_1}$ given by 
\begin{equation*}
\alpha(\phi_0(t))(s) = \mathrm{\Phi}_{\gamma_1}^{-1} \Big( \rho_{_{\gamma_1}}^{s\varepsilon \chi_{_{\gamma_1}} (\phi_0(t))} (\mathrm{\Phi}_{\gamma_1}(\phi_0(t)))\Big).
\end{equation*}
 By~\eqref{primug} and by point $(ii)$ of Lemma \ref{chart}, it follows that
\begin{align*}
\av (\alpha(\phi_0(t))(1) &=  \prim (\alpha(\phi_0(t)(1)) - \prim(\alpha(\phi_0(t)(0)) \\
& \leq \Big( \prim \circ \mathrm{\Phi}_{\gamma_1}^{-1} \Big) \Big( \rho_{_{\gamma_1}}^{\varepsilon \chi_{_{\gamma_1}} (\phi_0(t)))} (\mathrm{\Phi}_{\gamma_1}(\phi_0(t)) \Big) -  \Big( \prim \circ \mathrm{\Phi}_{\gamma_1}^{-1} \Big)  \Big(\Phi_{\gamma_1}( \phi_0(t)) \Big) \\
& \leq -\varepsilon^2D_{\gamma_1},
\end{align*}
which implies 
\begin{align*}
\prim (\phi_1)(t) &= \prim (\phi_0)(t) + \av (\alpha(\phi_0(t))(1) \\
& \leq \su(k) + \varepsilon^2 \delta - \varepsilon^2 D_{\gamma_1} \\
&< \su(k).
\end{align*}
Moreover, if $\phi_0(t) \in \W_{\gamma_1} \cap \W_{\gamma_i}$ for some $i\in \{2,...,n \}$, due to~\eqref{lipschitz}, we can deduce that $\phi_1(t) \in \W_{\gamma_i , 1 }$. Thus, the element $\phi_1$ enjoys that for every $t \in [0,1]$ either $\prim(\phi_1)(t) < \su(k)$ or $\prim(\phi_1)(t) \in [\su(k),  \su(k) + \varepsilon^2 \delta)$ and $\phi_1(t) \in \bigcup_{i=2}^{n} \W_{\gamma_i , 1}$. Iterating the procedure, at the step $n$ we find an element $\phi_n \in \suk$ such that for every $t \in [0,1]$, $\prim (\phi_n)(t) < \su(k)$ which again contradicts the definition of $\su$.

\end{proof}
\end{lemma}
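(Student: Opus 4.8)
The plan is to transpose the argument of Lemma~\ref{index1} to the setting in which $\eta_k$ has no global primitive on $\M_0$, using instead the action variation $\av$ along paths together with equations~\eqref{primug} and~\eqref{omotopia}, and replacing the family $\Gamma_0^k$ and the value $\mathbf{s}$ by $\suk$ and $\su$. First I would invoke Lemma~\ref{struwe2}: since $k\in J$, the set $\C_k=\B_k\cap\Z(\eta_k)$ is non-empty, and for every neighbourhood $V$ of $\C_k$ and every small $\varepsilon>0$ there is $\varphi_\varepsilon\in\suk$ whose values all satisfy either $\prim(\varphi_\varepsilon)(t)<\su(k)$, or $\prim(\varphi_\varepsilon)(t)\in[\su(k),\su(k)+\varepsilon)$ with $\varphi_\varepsilon(t)\in V$, where $\prim(\cdot)$ denotes the primitive~\eqref{primele} of $\eta_k$ along a path.

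Then I argue by contradiction, assuming every $\gamma\in\C_k$ has $\mathrm{index}(\gamma)\ge 2$. As in Lemma~\ref{index1}, I cover the compact set $\C_k$ by finitely many charts $(\V_{\gamma_i},\Phi_{\gamma_i})$ produced by Lemma~\ref{chart}, in each of which $\Z(\eta_k)\cap\V_{\gamma_i}$ sits inside $B^H_{r_{\gamma_i}}\times\{0\}$; by the index assumption this set now has codimension at least two. I fix $\delta=\min_i D_{\gamma_i}$, $R=\min_i r_{\gamma_i}$, $C=\max_{i\ne j}C_{ij}$ with $C_{ij}$ the Lipschitz constants of the transition maps, a small $\varepsilon<\min\{R/4nC,R/4n\}$, and the nested neighbourhoods $\W_{\gamma_i,1}\subset\cdots\subset\W_{\gamma_i,n}$. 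Applying Lemma~\ref{struwe2} with $V=\bigcup_i\W_{\gamma_i}$ and level gap $\varepsilon^2\delta$ yields $\phi\in\suk$ landing in $\{\prim(\phi)<\su(k)\}\cup(\{\su(k)\le\prim(\phi)<\su(k)+\varepsilon^2\delta\}\cap\bigcup_i\W_{\gamma_i})$; since the domain is one-dimensional and $B^H_{r_{\gamma_i}}\times\{0\}$ has codimension $\ge 2$, the transversality theorem lets me perturb $\phi$ to $\phi_0$ missing every $B^H_{r_{\gamma_i}}\times\{0\}$, with endpoints outside $\bigcup_i\V_{\gamma_i}$, while staying in the same set.

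Next comes the iterated radial deformation. At step $j$ I push the portion of $\phi_{j-1}$ inside $\V_{\gamma_j}$ outward in the $E$-direction through $\Phi_{\gamma_j}^{-1}\circ\rho_{\gamma_j}^{\varepsilon\chi_{\gamma_j}}\circ\Phi_{\gamma_j}$ exactly as in~\eqref{deformazione}, obtaining $\phi_j\in\suk$ (the homotopy class is unchanged, since the modification is supported in a ball and fixes endpoints). The level estimate must now be read through the action variation: for the deformation path $\alpha$ inside $\W_{\gamma_j}$, equation~\eqref{primug} identifies $\av(\alpha)(1)$ with the difference of a local primitive of $\eta_k$ on $\W_{\gamma_j}$ between its endpoints, and point (ii) of Lemma~\ref{chart} bounds this by $-D_{\gamma_j}\varepsilon^2$; combined with~\eqref{omotopia} this gives $\prim(\phi_j)(t)\le\su(k)+\varepsilon^2\delta-D_{\gamma_j}\varepsilon^2<\su(k)$ at the moved points, while the Lipschitz estimate~\eqref{lipschitz} shows that points of $\W_{\gamma_j}\cap\W_{\gamma_i}$ move into $\W_{\gamma_i,j}$. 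After $n$ steps $\phi_n([0,1])\subset\{\prim(\phi_n)<\su(k)\}$, contradicting the definition of $\su(k)$; hence some $\gamma\in\C_k$ has $\mathrm{index}(\gamma)\le 1$.

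The main obstacle I expect is precisely this passage from ``global primitive'' to ``local primitive plus action variation'': one has to check carefully that $\av(\alpha)(1)$ for the radial path $\alpha$ equals the increment of the local primitive on $\W_{\gamma_j}$ (which is exactly what~\eqref{primug} provides), and that assembling these increments along the full deformation homotopy through~\eqref{omotopia} reproduces the same monotone drop of the minimax profile that the global primitive furnished in the weakly exact case. The remaining ingredients --- the transversality perturbation, the arithmetic of the constants $\delta,R,C,\varepsilon$, and the bookkeeping with the nested sets $\W_{\gamma_i,j}$ --- carry over essentially verbatim from the proof of Lemma~\ref{index1}.
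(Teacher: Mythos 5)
Your proposal is correct and follows essentially the same route as the paper: contradiction via the finite chart cover from Lemma \ref{chart}, the transversality perturbation of the element produced by Lemma \ref{struwe2}, and the iterated radial deformation, with the level drop computed through the action variation $\av$ via \eqref{primug} (and \eqref{omotopia}) together with point (ii) of Lemma \ref{chart}. This matches the paper's argument in both structure and the key substitution of ``global primitive'' by ``local primitive plus action variation''.
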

\subsubsection{Proof of Theorem A (and Theorem A1)}
Let $k<c$ and assume $\Mric>0$. By continuity of $\mathrm{Ric}^{\Omega}_k$, there exists an open interval $I_k\subset (0,c)$ centered at $k$ and a positive constant $r$ such that $\mathrm{Ric}^{\Omega}_s \geq \frac{1}{r^2}>0$ for all $s \in I_k$. With the help of Lemma \ref{index1} or Lemma \ref{index2}, let $\{ \gamma_n = (x_n , T_n) \}$ be a sequence such that $\gamma_n$ is a contractible zero of $\eta_{k_n}$ with index smaller or equal than 1 and with $k_n \to k$. By Lemma \ref{Bonnet-Myers}, $T_n \leq 2\pi r$ so that
\begin{equation*}
|\eta_k (\gamma_n)|_{\M}  = |(\eta_k - \eta_{k_n})(\gamma_n)|_{\M} + |\eta_{k_n}(\gamma_n)|_{\M} =|k_n-k| T_n \leq |k_n - k | 2\pi r  .
\end{equation*}
Thus, $\{ \gamma_n \}$ is a vanishing sequence for $\eta_k$ and $T_n$ is bounded from above. Since the magnetic flow $\mathbf{\Phi}^{\sigma}$ as defined in \eqref{flussomagnetico} does not have rest points outside the zero section, by \cite[Proposition~1,~Section~4.1]{HZ}, $\gamma_n$ converges up to subsequence to a point $\gamma \in \M_0$ which is a contractible zero of $\eta_k$. Finally, if $\sigma$ is nowhere vanishing, by point $(ii)$ of Proposition \ref{cor1}, the magnetic Ricci curvature is positive for values close to zero and Theorem A1 follows.

\bibliographystyle{plain.bst}
\bibliography{Correction.MagneticCurvature-ClosedMagneticGeodesics.bib}

\end{document}